\documentclass[leqno]{article}

\usepackage{graphicx}
\usepackage{amscd}
\usepackage{amsmath}
\usepackage{caption}
\usepackage{amsfonts}
\usepackage{amssymb}
\usepackage{mathrsfs}
\usepackage{multicol}
\usepackage{color,comment}
\usepackage{pgfplots}
\pgfplotsset{compat=1.18}
\usepackage{upgreek,enumitem,hyperref}
\usepackage{chngcntr}
\counterwithin{figure}{section}

\usepackage{authblk} 
\usepackage{csquotes}
\usepackage[T1]{fontenc}
\usepackage{latexsym}
\usepackage{amssymb,amsthm,amsfonts}
\usepackage{mathtools}
\usepackage{relsize}
\usepackage{framed, color}
\usepackage{dsfont}
\usepackage{todonotes}
\usepackage{cancel}
\usepackage[normalem]{ulem}
\usepackage{xcolor}

\usepackage{gauss} 
\usepackage{geometry}
\usepackage{stmaryrd}
\usepackage{nicefrac}

\numberwithin{equation}{section}

\definecolor{darkgreen}{rgb}{0.09, 0.45, 0.27}
\definecolor{debianred}{rgb}{0.84, 0.04, 0.33}
\definecolor{orange}{rgb}{1.0, 0.5, 0.0}
\newcommand{\D}{\mathcal{D}}
\newcommand{\cC}{\mathcal{C}}

\newcommand{\R}{\mathbb R}

\newcommand{\be}{\begin{equation}}
\newcommand{\ben}{\begin{eqnarray*}}
\newcommand{\een}{\end{eqnarray*}}

\newcommand{\dist}{\mathrm{dist}}

\newtheorem{theorem}{Theorem}[section]
\newtheorem{lemma}[theorem]{Lemma}
\newtheorem{remark}[theorem]{Remark}

\newtheorem{definition}[theorem]{Definition}
\allowdisplaybreaks

\allowdisplaybreaks[1]

\usepackage[normalem]{ulem}
\definecolor{DarkBlue}{rgb}{0,0.1,0.7} 
\definecolor{DarkGreen}{rgb}{0,0.5,0.1}

\newcommand\soutD{\bgroup\markoverwith
	{\textcolor{DarkGreen}{\rule[.5ex]{2pt}{1pt}}}\ULon}

\newcommand{\Lrad}[2][2]{L^{#1}_{\mathrm{rad}}\if\relax\expandafter\detokenize{#2}\relax\else(#2)\fi}
\newcommand{\Lanti}[2][2]{L^{#1}_{\mathrm{anti}}\if\relax\expandafter\detokenize{#2}\relax\else(#2)\fi}
\newcommand{\Lradanti}[2][2]{L^{#1}_{\mathrm{rad},\mathrm{anti}}\if\relax\expandafter\detokenize{#2}\relax\else(#2)\fi}
\newcommand{\Hrad}[2][1]{H^{#1}_{\mathrm{rad}}\if\relax\expandafter\detokenize{#2}\relax\else(#2)\fi}
\newcommand{\Hradanti}[2][1]{H^{#1}_{\mathrm{rad},\mathrm{anti}}\if\relax\expandafter\detokenize{#2}\relax\else(#2)\fi}
\newcommand{\norm}[1]{\left\lVert \if\relax\expandafter\detokenize{#1}\relax\impvar\else #1\fi \right\rVert}
\newcommand{\Norm}[1]{\bigl\lVert \if\relax\expandafter\detokenize{#1}\relax\impvar\else #1\fi \bigr\rVert}

\usepackage{tikz}
\usepackage{pgfplots}
\pgfplotsset{compat=1.18}
\usepgfplotslibrary{fillbetween} 
\usepackage{mathtools}
\usepackage{amssymb}

\newcommand{\scm}{\mathrm{sc}^-}
\newcommand{\hscm}{\hat{\mathrm{sc}}^-}
\newcommand{\conv}{\mathrm{conv}}
\newcommand{\argmin}{\mathrm{argmin}}

\newcommand{\Z}{\mathbb{Z}}

\newcommand{\N}{\mathbb{N}}

\newcommand{\Id}{\text{Id}} %

 \def\dd{\, {\rm d}}

\title{Optimal data-driven solutions for a stationary diffusive model of population growth}

\author[1]{Laura Baldelli}

\author[2]{Paolo Malanchini}

\author[1]{Wolfgang Reichel}

\affil[1]{Institute for Analysis, Karlsruhe Institute of Technology (KIT), D-76128 Karlsruhe, Germany.}
\affil[2]{Dipartimento di Matematica e Applicazioni, Universit\`a degli Studi di Milano - Bicocca, via Roberto Cozzi 55, 20125 - Milano, Italy.}
\date{\small\today}

\begin{document}

\maketitle

\begin{abstract}
We study optimal data-driven solutions for the stationary diffusive population growth model $-\Delta u = r u$ in a bounded domain $\Omega\subset\R^N$ with Neumann boundary conditions. Instead of prescribing a functional relation between the position $x$, the net per-capita growth rate $r$ and the population size $u$, we look for a pair $(u,r)\in H^1(\Omega)\times L^\infty(\Omega)$ that fits a given data set in an optimal way measured by a cost functional $I$ and an additional penalty term. We characterize the relaxed cost functional $\scm I$ by showing that its density is given as the partial lower convex envelope with respect to the variable $r$, and prove the existence of optimal data-driven solutions. Furthermore, we establish a consistency result comparing conventional solutions of $-\Delta u = \varrho(x,u)u$ with optimal data-driven solutions where the data set stems from the functional relation $(x,u)\mapsto \varrho(x,u)$. Finally, as data sets evolve, we prove the convergence of optimal solutions via the $\Gamma$-convergence of the associated cost functionals.
\end{abstract}


\section{Introduction and main results}
{
  \renewcommand{\thefootnote}{}
  \footnotetext{{\bf{MSC 2020}}: 35J20, 49J45.}
  \footnotetext{{\bf{Keywords}}: optimal data-driven solutions, population growth, relaxation, Gamma convergence.}
  \footnotetext{{\bf Corresponding author:} Wolfgang Reichel}
}
\setcounter{footnote}{0}


Within less than a decade data-driven aspects have impacted the classical theory of partial differential equations. The availability of large data sets and the possibility to handle them efficiently have opened new scenarios where classical PDEs models are enhanced by data-driven points of view. The particular mathematical framework for optimal data-driven solutions that we have chosen builds upon the recently introduced data-driven approach to solid mechanics \cite{KO, CMO}, where datasets consist of vector-valued strain–flux pairs $(\varepsilon,\sigma)\in \mathbb{R}^{d}\times\mathbb{R}^{d}$. Subsequently, the authors in \cite{LSS23} extended this methodology to the modeling and analysis of viscous fluid mechanics, where datasets consist of matrix-valued strain–stress pairs $(\varepsilon,\sigma)\in \mathbb{R}^{d\times d}\times\mathbb{R}^{d\times d}$. In contrast to the vector and matrix settings described above, in this paper we consider a scalar data-driven framework applied to a population growth model. Our motivation is twofold: on the one hand, the scalar nature of the underlying elliptic equation avoids major technical complications, making the theoretical framework conceptually more intuitive and transparent. We note that scalar data-driven formulations have also been addressed in applied contexts, such as by R\"oger and Schweizer \cite{RS20} in porous media modeling, and in \cite{SM24} for scalar diffusion problems. On the other hand, population dynamics models are of interest in their own right, as we discuss in more detail below. We believe that the current setting where the functional relation $(x,u)\mapsto \varrho(x,u)$ has been given up in favor of optimal pairs $(r,u)$ in the phase space can be generalized to a large variety of other nonlinear elliptic boundary value problems for scalar of vector valued unknowns, e.g., in models for chemical reactions \cite{Epstein_Pojman}, chemotaxis \cite{Keller_Segel}, water-biomass interaction \cite{Klausmeier}, riot dynamics \cite{riots}, activator-inhibitor stimulated pattern formation \cite{Gierer_Meinhardt, Murray_2}, preditor prey models \cite{Dunbar1, Dunbar2}, competing or cooperating species \cite{Cantrell_Cosner} -- just to name a few. 

\medskip

Let us now describe our setting in more detail. We assume that $\Omega\subset \R^N$, $N\ge 2$ is an open bounded Lipschitz domain.  On the differential equations side we consider the problem
\begin{equation} \label{eq:main} 
-\Delta u = r u \mbox{ in } \Omega, \quad \partial_\nu u  =0 \mbox{ on } \partial \Omega
\end{equation}
with $(u,r)$ from the phase space $V\coloneqq H^1(\Omega)\times L^\infty(\Omega)$. The main requirement for pairs $(u,r)$ is expressed by the differential constraint
\begin{equation}\label{eq:constraint}
\cC=\left\{(u, r)\in V, u\ge 0 : -\Delta u= r u \mbox{ in } \Omega,\quad \partial_\nu u=0 \mbox{ on } \partial \Omega\right\}.
\end{equation}
The model \eqref{eq:main} it the stationary version of a diffusive evolution problem $\partial_t u =\Delta u +ru$, where $u(x,t)$ models the population density and $r(x,t)$ stands for the net per-capita growth rate of the population at time $t$ and position $x\in\Omega$. Members of the population are supposed to be mobile and to prefer areas of low population density. Therefore, a diffusive model is adequate and for simplicity we assume the diffusivity to be normalized by $1$.
 
\medskip

Next we give some definitions on the data side. Data points are supposed to belong to the local phase space $Y\coloneqq \Omega\times[0,\infty)\times\R$ and a data set $\D$ is a set of data points
\begin{equation}
\D=\{(x_\beta, u_\beta, r_\beta)\in Y : \beta\in B\}
\end{equation}
for some index set $B$. A data point $(x_\beta, u_\beta, r_\beta)$ stands for a measurement of population density $u_\beta$ and net per-capita reproduction rate $r_\beta$ at the spatial point $x_\beta\in \Omega$. Note that time plays no role since we are in a stationary setting.

\medskip

In order to see how well a pair $(u,r)\in \cC$ fits to the data set $\D$ we define a cost functional $I:V\to [0,\infty]$ 
\begin{align}\label{eq:I_f}
 I(u,r) \coloneqq 
 \begin{cases} 
 \int_{\Omega} \dist( (x,u(x),r(x)), \D) \dd x, & (u,r) \in \cC, \\ 
 \infty, & \text{otherwise},
 \end{cases}
\end{align}
where $\dist: Y\to [0,\infty)$ is the pseudo-distance\footnote{The pseudo-distance does not satisfy the triangle inequality. Nevertheless, it is more convenient to work with the pseudo-distance than with its square root version.}
\begin{equation}\label{distance}
\dist((x,u,r),\D)\coloneqq \inf_{(\tilde x, \tilde u, \tilde r)\in \D} \left( |x-\tilde x|^2+ |u-\tilde u|^2+|r-\tilde r|^2 \right).
\end{equation}
Note that we have defined the cost functional on the entire phase space $V$ (and not only on $\cC$). Ideally, one would like to consider minimizers of $I$. But in general, the functional $I$ is not weak-weak$^\ast$ lower semi-continuous on $V$ since for a general data set the integrand is not convex. Therefore we need to consider the weak-weak$^\ast$ lower semi-continuous relaxation $\scm I: V\to [0,\infty]$ of $I$ defined by (the weak-weak$^\ast$ lower semi-continuity of $\scm I$ is shown in Lemma~\ref{lemma_sc-} of the Appendix):
\begin{equation} \label{def:sc-}
 \scm I(u,r) \coloneqq \inf\left\{ \liminf_{n\to\infty} I(u_n,r_n) : u_n \rightharpoonup u \mbox{ in } H^1(\Omega) \mbox{ and } r_n\stackrel{*}{\rightharpoonup} r \mbox{ in } L^\infty(\Omega)\right\}.
\end{equation}
Beside the relaxation of $I$ we also add a constraint that enforces the norm bound $\|r\|_\infty\leq R$ for some given $R>0$. This will be done via the penalty functional
$$
\iota_R: L^\infty(\Omega)\to \{0,\infty\}, \quad \iota_R(r)\coloneqq \left\{ \begin{array}{ll} 0 & \mbox{ if } \|r\|_\infty\leq R, \vspace{\jot} \\
\infty & \mbox{ if } \|r\|_\infty >R.
\end{array} \right.
$$
On the one hand, it is reasonable to assume that the net per capita growth rate $r$ cannot be arbitrary large. On the other hand, this restriction will add coercivity in the $r$-direction and it will be useful in Section~\ref{sec_consistency}. With these preparations we can now define \textbf{optimal data-driven solutions} of \eqref{eq:main} as follows. 

\begin{definition}\label{def:ods}
 For given $R>0$ and a given data set $\D\subseteq Y$ a minimizer $(u^\ast,r^\ast)\in \mathcal{C}$ of $\scm I+\iota_R$ is called an optimal data-driven solution of \eqref{eq:main}. 
\end{definition}

One of our main results is that we can precisely identify the weak-weak$^\ast$ lower semi-continuous relaxation $\scm I$. 

\begin{definition} \label{def:conv} Let $f:Y \to [0,\infty)$ be a Carath\'{e}odory-function and let $J:V\to [0,\infty]$ given by
$$
J(u,r) \coloneqq \int_\Omega f(x,u(x),r(x))\dd x.
$$
\begin{itemize}
\item[(a)] The partial lower convex envelope $\conv_r f: Y$ of $f$ is defined as 
$$
\conv_r f(x,u,r) \coloneqq \sup\left\{ g(x,u,r): r \mapsto g(\cdot,\cdot,r) \mbox{ is convex and } g \le f\right\}.
$$
\item[(b)] The functional $J^\ast: V\to [0,\infty]$  is defined as 
    \begin{equation}\label{eq:I_f_r}
 J^\ast (u,r) \coloneqq  \int_{\Omega} \conv_r f(x, u(x),r(x)) \dd x.
  \end{equation}
\end{itemize}
\end{definition}

\begin{theorem} \label{thm:main_Istar_scm_I}
    If $I$ is given as in \eqref{eq:I_f} and $\scm I$ is its weak-weak$^\ast$ lower semi-continuous relaxation then $\scm I = I^\ast$ with $I^\ast$ from Definition~\ref{def:conv}.
\end{theorem}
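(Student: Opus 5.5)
We argue under the convention that $I^\ast(u,r)=\int_\Omega \conv_r \dist((x,u,r),\D)\dd x$ for $(u,r)\in\cC$ and $I^\ast=\infty$ otherwise, with $f(x,u,r)\coloneqq\dist((x,u,r),\D)$. The plan is to prove the two inequalities $I^\ast\le \scm I$ and $\scm I\le I^\ast$.

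\textbf{Preliminaries.} First, $\cC$ is weak-weak$^\ast$ closed. If $u_n\rightharpoonup u$ in $H^1(\Omega)$, Rellich gives $u_n\to u$ in $L^2(\Omega)$. Together with $r_n\stackrel{*}{\rightharpoonup} r$ this yields $r_nu_n\rightharpoonup ru$ in $L^2(\Omega)$. Hence the weak Neumann formulation passes to the limit, and $u\ge0$ is preserved.

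Second, $\conv_r f$ is Carath\'eodory and admits the one-dimensional Carath\'eodory representation
\[
\conv_r f(x,u,r)=\inf\{\lambda f(x,u,r_1)+(1-\lambda)f(x,u,r_2):\ \lambda\in[0,1],\ \lambda r_1+(1-\lambda)r_2=r\}.
\]
This infimum is attained, or approximately attained, with $r_1,r_2$ in a bounded set depending only on local bounds. The reason is that $f\ge0$ and $f(x,u,r)\le |x-\tilde x|^2+|u-\tilde u|^2+|r-\tilde r|^2$ for a fixed data point, so $f$ has at most quadratic growth in $r$, locally uniformly in $(x,u)$. Upper semicontinuity in $(x,u)$ holds because $\conv_r f$ is an infimum of continuous functions. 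Lower semicontinuity follows from local uniform continuity of $f$ in $(x,u)$ for bounded $r$, combined with the truncation just described.

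\textbf{Lower bound.} Since $\conv_r f\le f$, we have $I^\ast\le I$. It then suffices to show that $I^\ast$ is weak-weak$^\ast$ lower semicontinuous, since $\scm I$ is the largest such functional below $I$. Take $u_n\rightharpoonup u$ and $r_n\stackrel{*}{\rightharpoonup} r$ with $\liminf I^\ast(u_n,r_n)<\infty$. Then eventually $(u_n,r_n)\in\cC$, and by the closedness above $(u,r)\in\cC$. Moreover $u_n\to u$ strongly in $L^1$, and $r_n\rightharpoonup r$ weakly in $L^1$. The integrand is nonnegative, lower semicontinuous in $(u,r)$ and convex in $r$. Ioffe's lower semicontinuity theorem therefore gives $\int\conv_r f(x,u,r)\le\liminf\int\conv_r f(x,u_n,r_n)$.

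\textbf{Upper bound (recovery sequence).} This is the main obstacle, because oscillations of $r$ must be produced while keeping the nonlinear constraint $-\Delta u_n=r_nu_n$ exactly. Fix $(u,r)\in\cC$. By the strong maximum principle/Harnack inequality for nonnegative solutions with bounded potential, either $u\equiv0$ or $u>0$ in $\Omega$.

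\emph{Case $u\equiv0$.} Set $u_n\equiv0$ and use the same laminate $r_n$ as in the second case below, except that no eigenvalue correction is needed. The constraint then holds trivially.

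\emph{Case $u>0$.} Here $u$ is a positive Neumann eigenfunction of $-\Delta-r$ with principal eigenvalue $0$. The construction proceeds in the following steps.
\begin{itemize}
\item[(i)] By a measurable selection, choose $\lambda(x),r_1(x),r_2(x)$ that are $\varepsilon$-optimal in the two-point representation at $(x,u(x),r(x))$, and bounded on sets of large measure; then approximate them by simple functions.
\item[(ii)] Build the fine laminate $\tilde r_n=r_1$ on a periodic set of local volume fraction $\lambda$ and $\tilde r_n=r_2$ elsewhere. This gives $\tilde r_n\stackrel{*}{\rightharpoonup} r$, and $\int f(x,u,\tilde r_n)\to\int[\lambda f(x,u,r_1)+(1-\lambda)f(x,u,r_2)]$ by the Riemann--Lebesgue lemma for oscillating functions.
\item[(iii)] Let $\mu_n$ be the principal Neumann eigenvalue of $-\Delta-\tilde r_n$, and put $r_n\coloneqq\tilde r_n+\mu_n$. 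Via the Rayleigh quotient and compactness of $H^1\hookrightarrow L^2$, $\mu_n\to0$, because the potential converges weakly$^\ast$.
\item[(iv)] Let $u_n>0$ be the normalized principal eigenfunction, with $\|u_n\|_2=\|u\|_2$. Then $(u_n,r_n)\in\cC$, the sequence $u_n$ is bounded in $H^1$, and $u_n\rightharpoonup u$. The last point uses the limit equation $-\Delta w=rw$ together with simplicity and positivity of the principal eigenfunction.
\end{itemize}

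Finally, compare $f(x,u_n,r_n)$ with $f(x,u,\tilde r_n)$, using $u_n\to u$ in $L^2$, $\mu_n\to0$ and the local uniform continuity and quadratic growth of $f$. This gives $\limsup I(u_n,r_n)\le I^\ast(u,r)+C\varepsilon$. A diagonal argument in $\varepsilon$ and in the simple-function approximation, using metrizability of the weak topologies on bounded sets, concludes $\scm I\le I^\ast$.
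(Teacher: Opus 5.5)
Your strategy is sound and genuinely different from the paper's. The paper never builds a global recovery sequence. It first freezes $u$ (Lemma~\ref{scm_alt_def}). There the constraint is restored by solving $-\Delta\tilde u_n+\alpha\tilde u_n=(\alpha+r_n)u$ and setting $\tilde r_n=(\alpha+r_n)u/\tilde u_n-\alpha$, which needs $\inf_\Omega u>0$ and uniform convergence $\tilde u_n\to u$. It then shows with De Giorgi--Letta that $A\mapsto\scm I_A(u,r)$ is an absolutely continuous measure. Its density is identified at Lebesgue points by blow-up as the periodic relaxation $\mathcal Q_r f$, and finally $\mathcal Q_r f=\conv_r f$ follows by Jensen and Carath\'eodory.

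You instead use the direct method: Ioffe's theorem for $I^\ast\le\scm I$, and for $\scm I\le I^\ast$ an explicit laminate plus the shift $r_n=\tilde r_n+\mu_n$ by the principal Neumann eigenvalue of $-\Delta-\tilde r_n$. The eigenvalue shift is a clean substitute for the paper's $\alpha$-problem. It keeps $-\Delta u_n=r_nu_n$, $u_n>0$ exactly, and $\mu_n\to0$ plus simplicity gives $u_n\to u$ (even in $H^1$). It needs neither positivity of $u$ up to $\partial\Omega$ nor H\"older estimates. The price is the measurable selection and simple-function bookkeeping, which the blow-up avoids because it needs only one periodic $w$ at each Lebesgue point.

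Two steps need repair.

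\textbf{Lower semicontinuity of $\conv_r f$.} Your reason for joint lower semicontinuity of $\conv_r f$ (needed for Ioffe) is false: an upper quadratic bound on $f$ does not keep near-optimal $r_1,r_2$ bounded. For $\D=\Omega\times(\{(0,0)\}\cup\{1\}\times\R)$ one has $f(x,0,r)=\min\{r^2,1\}$, so $\conv_r f(x,0,\cdot)\equiv0$. At $r=\tfrac12$, pairs with value below $\varepsilon$ must have $\max_i|r_i|\to\infty$ as $\varepsilon\to0$. The conclusion is nevertheless true. Since $\sqrt f$ is $1$-Lipschitz on $Y$ and $a\mapsto(\sqrt a-\eta)_+^2$ is convex and nondecreasing, with $\eta=|(x,u)-(x',u')|$ the function $(\sqrt{\conv_r f(x,u,\cdot)}-\eta)_+^2$ is a convex minorant of $f(x',u',\cdot)$. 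Hence $\sqrt{\conv_r f}$ is $1$-Lipschitz in $(x,u)$, and being convex in $r$ it is jointly continuous. Alternatively, replace $u_n$ by $u$ inside $f$ via Lemma~\ref{lem:I_compare} and use weak lower semicontinuity of $r\mapsto\int_\Omega\conv_r f(x,u,r)\dd x$.

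\textbf{The final diagonal argument.} The diagonal argument in $\varepsilon$ is not available. In the same example the laminates must have $\|r_n^\varepsilon\|_\infty\to\infty$, so the local weak$^\ast$ metric cannot be used. In fact no single sequence realizes $\scm I(0,\tfrac12)=0$: a bound $\|r_n\|_\infty\le M$ forces $\liminf I(u_n,r_n)\ge c_M|\Omega|$ with $c_M>0$, by Jensen for the convex envelope of $\min\{r^2,1\}$ on $[-M,M]$. The step is also unnecessary. Since $\scm I$ is an infimum over sequences, it suffices to show $\scm I(u,r)\le I^\ast(u,r)+C\varepsilon$ for every $\varepsilon$. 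At fixed $\varepsilon$, add the uniformly small correction $r-\hat r$ to the simple-function laminate so it converges weak$^\ast$ to $r$ exactly. Alternatively, diagonalize only over the simple-function approximation, where the $L^\infty$ bound is fixed.
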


\begin{remark}
Although the variables $u$ and $r$ appear symmetrically in the cost functional they play very asymmetric roles in our analysis. Namely, compactness in the $u$-variable makes it possible that convexity only with respect to the $r$-variable is sufficient to identify the weak-weak$^\star$ lower semi-continuous envelope $\scm I$ as $I^\ast$. 
\end{remark}

The further main results of our paper can be described as follows:
\begin{itemize}
    \item[(a)] An existence result for optimal data-driven solutions is given in Theorem~\ref{thm:ex_odds}.
    \item[(b)] Consistency: suppose that we know the complete data set $\D=\{(x,u,\varrho(x,u)): x\in \Omega, u\in [0,\infty)\}$ from a known reproduction law $(x,u)\mapsto \varrho(x,u)$. For such a data set we can consider optimal data-driven solutions of \eqref{eq:main} and we can also consider weak solutions of $-\Delta U = \varrho(x,U)U$ in $\Omega$, $\partial_\nu U=0$ on $\partial\Omega$. The question is, under which conditions on the reproduction law $\varrho$ the two notions coincide. The answer is given in Theorem~\ref{thm:consistency}.
    \item[(c)] Convergence: here we consider the question of what happens if data sets $\D_n$ converge (in a sense made precise in Definition~\ref{def_conv_data_set}) to a limit data set $\D$ as $n\to \infty$. In Theorem~\ref{thm:gammaconv} we show that the $\Gamma$-limit of the cost functionals $I_n$ is the right limiting object and we identify the $\Gamma$-limit. In particular this implies the convergence of optimal data-driven solutions.
\end{itemize}
Finally, let us give some remarks on models of population growth. Although the mathematical study of population growth dates back to Fibonacci, the foundational modern formulation is the logistic equation introduced by Verhulst \cite{Verhulst}, given by
$$\partial_t u=a(b-u)u \quad \text{in } \Omega\subset\mathbb{R}^N,$$
where $a,b\in \mathbb{R}^+$. Drawing an analogy with physics, Fisher \cite{Fisher} incorporated a spatial diffusion term into Verhulst's equation, leading to the reaction–diffusion partial differential equation
$$\partial_t u=a(b-u)u+\Delta u \quad \text{in } \Omega\subset\mathbb{R}^N.$$
In this setting, spatial heterogeneities, represented by space-dependent positive coefficients $a(x)$ and $b(x)$ in the nonlinear term, play a crucial role in the behavior of positive solutions, as they reflect the fact that population distribution varies across spatial positions, cf. \cite{Du2006,   Henry1981, CC2003, XS23}. Instead of logistic growth models, a number of different models have been considered in  \cite{KPP}, see also \cite{BacaerBook} for an exhaustive study on the subject.  In the present paper, we focus on stationary growth models featuring spatial heterogeneity, coupled with Neumann boundary conditions to model an isolated environment with zero flux. This setting motivates the problem formulated in \eqref{eq:main} where we drop the exact functional relation between net per-capita growth rate $r$ and population density $u$ in favor of optimal proximity of a pair $(u,r)\in\cC$ to a given data set $\D$.
Extending this framework to the time-dependent case is nevertheless of great interest, as population growth inherently varies over time. Although a space-time variational formulation for evolutionary problems was recently studied in \cite{SX}, applying an approach that preserves causality is far from straightforward in data-driven problems. Consequently, we leave the time-dependent problem for a future work.
\medskip

{\it Notations.} We denote by $B_r(x)$ the open $\R^N$-ball with center $x\in\R^N$ and radius $r>0$. The symbols $\overline{B}_r(x)$, $\partial B_r(x)$, $B_r(x)^c$ stand, respectively, for the closure, the boundary, and the exterior of the ball $B_r(x)$. Let $Q \coloneqq (-1/2, 1/2)^N$ be the unit cube and $Q_{ \delta}(x_0)\coloneqq  x_0 + { \delta}Q$ its translated and scaled copy for a given $x_0\in\R^N$ and $\delta>0$. 

Given any $A\subseteq \R^N$, we write $\chi_A$ for the characteristic function of $A$. For any $N$-dimensional Lebesgue measurable set $A$ let $|A|$ be its $N$-dimensional Lebesgue measure. Given a real-valued function $\varphi$ we define $\varphi_+\coloneqq\max\{\varphi,0\}$ (resp., $\varphi_-\coloneqq\max\{-\varphi,0\}$) as its positive (resp., negative) part.

We denote by $H^1(\Omega)$ and $L^q(\Omega)$ with $1\le q\le \infty$ the standard Sobolev and Lebesgue spaces endowed with the norms $\|\cdot\|_{H^1}$ and $\|\cdot\|_{q}$, respectively.
On the phase space $V= H^1(\Omega)\times L^\infty(\Omega)$ we have the weak-weak$^\ast$ topology. Since $H^1(\Omega)$ is a reflexive Sobolev space and $L^\infty(\Omega)$ is the dual of the separable space $L^1(\Omega)$, their respective weak and weak$^\ast$ topologies are metrizable on bounded sets. We denote by $d_w$, $d_{w*}$, $d$ the local metrics in $H^1(\Omega)$, $L^\infty(\Omega)$ and $H^1(\Omega)\times L^\infty(\Omega)$ respectively. 

For the sake of simplicity, we will use the following notation to indicate the pseudo distance $\dist((x,u,r),\D)$ defined in \eqref{distance} with possible frozen variables 
\begin{equation}\label{def:dist:fix}
f(x,u,r)=\dist((x,u,r),\D) \quad f_u(\cdot, \cdot)=\dist((\cdot,u,\cdot),\D),\quad f_{x,u}(\cdot)=\dist((x,u,\cdot),\D).
\end{equation}

\medskip
The structure of the paper is as follows. In Section~\ref{sec_existence}, after establishing some properties of the dataset $\D$ and the constraint set $\cC$, we prove the existence of an optimal data-driven solution of \eqref{eq:main}.
In Section~\ref{sec_consistency}, we deal with consistency, proving Theorem~\ref{thm:consistency}. We address convergence in Section~\ref{sec_conv}, proving the $\Gamma$-convergence of the associated cost functionals to $\scm I$. Then, Section~\ref{sec_scmI=Istar} is devoted to the proof of Theorem~\ref{thm:main_Istar_scm_I}, that is the identification of $\scm I$ with $I^*$. Finally, in the Appendix we prove the weak-weak$^\ast$ lower semi-continuity of $\scm I$ together with its characterization.

\section{Fundamental properties of $\cC, \D, I, \scm I$ and existence}\label{sec_existence}

In this section we show the existence result for optimal data-driven solutions in Theorem~\ref{thm:ex_odds}. It requires a number of fundamental properties such as weak-weak$^\ast$ closedness of $\cC$, coercivity properties of $I, \scm I$ and weak-weak$^\ast$ lower semi-continuity of $\scm I$. We begin with the following result on the coercivity properties of the functionals $I, \scm I$. 

To prove the existence of an optimal data-driven solution to equation \eqref{eq:main}, as defined in Definition \ref{def:ods}, it is actually sufficient to consider the coercivity of the operator $I + \iota_R$ and $\scm I + \iota_R$. However, we also prove the coercivity of $I$ and $\scm I$ under slightly different assumptions which will be needed in Section \ref{sec_conv}.

\begin{lemma}\label{l:coercivity_omega}
If there exists $a\in \R, b>0$ such that 
\begin{itemize}
\item[(a)] $\D$ satisfies 
\begin{equation} \label{eq:condD}
 \D \subseteq \Omega \times \mathcal S_{a,b} \mbox{ with } \mathcal S_{a,b}\coloneqq \{ (u,r)\in [0,\infty)\times \R \,: u^2 < aur+b \}
 \end{equation}
then $\scm I+\iota_R$ is coercive,
\item[(b)] $\D$ satisfies 
\begin{equation} \label{eq:condD_new}
 \D \subseteq \Omega \times \Sigma_{a,b} \mbox{ with } { \Sigma}_{a,b}\coloneqq \{ (u,r)\in [0,\infty)\times \R \,: u^2 +r^2 < aur+b \}
 \end{equation}
then $I$ and $\scm I$ are coercive.
\end{itemize}
\end{lemma}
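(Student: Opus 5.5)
The plan is to work pointwise: use the geometry of $\mathcal S_{a,b}$ (resp.\ $\Sigma_{a,b}$) to bound $u(x)$ (resp.\ $(u(x),r(x))$) by the pseudo-distance $f(x,u(x),r(x))$, integrate this bound, and recover the gradient from the equation. Coercivity is understood as boundedness in $V$ of the sublevel sets $\{F\le t\}$. The key identity comes from testing $-\Delta u=ru$, $\partial_\nu u=0$ with $u$ itself. For $(u,r)\in\cC$ this gives
\[
\int_\Omega |\nabla u|^2\dd x=\int_\Omega r u^2\dd x\le \|r\|_\infty\|u\|_2^2 .
\]
So for elements of $\cC$ an $H^1$ bound follows once we control $\|u\|_2$ and $\|r\|_\infty$.

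\textbf{Pointwise bound from the data.} Let $(\tilde x,\tilde u,\tilde r)\in\D\subseteq\Omega\times\mathcal S_{a,b}$. From $\tilde u^2<a\tilde u\tilde r+b\le |a|\tilde u|\tilde r|+b$ we get $\tilde u\le |a||\tilde r|+\sqrt b$. Hence
\[
u\le \tilde u+|u-\tilde u|\le \sqrt b+|a||r|+|a||r-\tilde r|+|u-\tilde u| .
\]
Taking the infimum over data points yields $(u-\sqrt b-|a||r|)_+\le (1+|a|)\sqrt{f(x,u,r)}$. I would therefore set
\[
\varphi(u,r)\coloneqq c\,\bigl((u-\sqrt b-|a||r|)_+\bigr)^2,\qquad c=(1+|a|)^{-2},
\]
so that $f\ge\varphi$. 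For fixed $u$, $\varphi$ is convex in $r$, being a convex nondecreasing function of the convex function $-u+\sqrt b+|a||r|$, and it is continuous.

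\textbf{Part (a) for $I+\iota_R$ and $\scm I+\iota_R$.} If $I(u,r)\le t$ and $\|r\|_\infty\le R$, then $u\le \sqrt b+|a|R+\sqrt{\varphi/c}$ pointwise. This gives $\|u\|_2^2\le C(1+R^2+t)$, and the identity above then gives $\|\nabla u\|_2^2\le RC(1+R^2+t)$.

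For $\scm I$ I avoid approximating sequences with uncontrolled $\|r_n\|_\infty$. Instead I introduce the comparison functional $\Phi(u,r)=\int_\Omega\varphi(u,r)\dd x$ on $\cC$, with $\Phi=\infty$ off $\cC$. Then $\Phi\le I$, and I claim $\Phi$ is weak-weak$^\ast$ lower semicontinuous. To see this, let $u_n\rightharpoonup u$ in $H^1$ and $r_n\stackrel{*}{\rightharpoonup} r$. Then $(u_n)$ is bounded in $H^1$, so by Rellich $u_n\to u$ in $L^2$ and a.e. along a subsequence realising the liminf. Because $\varphi$ is continuous, nonnegative and convex in $r$, Ioffe's lower semicontinuity theorem (strong convergence in $u$, weak convergence in $r$) gives $\Phi(u,r)\le\liminf\Phi(u_n,r_n)$. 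This argument also uses the weak-weak$^\ast$ closedness of $\cC$: passing to the limit in $\int\nabla u_n\nabla\psi=\int r_nu_n\psi$ works since $r_n\stackrel{*}{\rightharpoonup} r$ and $u_n\psi\to u\psi$ in $L^1$. Since $\scm I$ is the largest lsc functional below $I$, we get $\Phi\le\scm I$. Finiteness of $\scm I$ forces $(u,r)\in\cC$, so the estimates of the first paragraph of this part apply with $t$ replaced by a bound on $\Phi$, together with $\|r\|_\infty\le R$.

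\textbf{Part (b).} Here $\tilde u^2+\tilde r^2<a\tilde u\tilde r+b$. If $|a|<2$ this is a bounded ellipse, so $|(\tilde u,\tilde r)|\le M$. In that case $f\ge \bigl((|(u,r)|-M)_+\bigr)^2\eqqcolon\psi(u,r)$, which is jointly convex. This gives bounds on $\|u\|_2$ and $\|r\|_2$ from $I$, and the same comparison argument with $\Psi=\int\psi$ on $\cC$ transfers them to $\scm I$. The main obstacle is converting these into the bound required by the weak-weak$^\ast$ topology. The identity $\int|\nabla u|^2=\int ru^2$ needs $\|r\|_\infty$, or at least $r\in L^{N/2}$ with a Sobolev/Gagliardo--Nirenberg absorption. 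I would first try to combine the pointwise inequality $|r|\le M+\sqrt{f}$ with the quadratic-form structure, namely $\int ru^2\le M\|u\|_2^2+\int\sqrt f\,u^2$. The remaining term would be treated with Hölder, Sobolev embedding and the available integrability. If $a\ge 2$ the set $\Sigma_{a,b}$ is unbounded along a direction $u\approx r$; there I would exploit $u^2+r^2-aur<b$ restricted to $u\ge0$ to get a linear bound in that direction, and expect the needed $L^\infty$ control of $r$ to be the hardest point to establish.
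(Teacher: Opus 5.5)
\textbf{Part (a): the convexity step fails.} Your bound for $I+\iota_R$ is fine. The transfer to $\scm I+\iota_R$ breaks at one step: $\varphi(u,\cdot)=c\bigl((u-\sqrt b-|a||r|)_+\bigr)^2$ is \emph{not} convex in $r$. The outer function $s\mapsto (s_-)^2$ is nonincreasing, so the composition rule you invoke does not apply. Concretely, for $a\neq 0$ and $u>\sqrt b$ one has $\varphi(u,0)>0=\varphi\bigl(u,\pm (u-\sqrt b)/|a|\bigr)$.

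Hence Ioffe's theorem is unavailable, and $\Phi$ is genuinely not weak-weak$^\ast$ lower semicontinuous. Take $(u_n,r_n)\in\cC$ with $u_n\to\lambda>\sqrt b$ uniformly and $r_n$ oscillating between $\pm T$, $T$ large, built as in the proof of Lemma~\ref{scm_alt_def}; then $\Phi(u_n,r_n)\to 0<\Phi(\lambda,0)$. Convexifying does not help either: a nonnegative function vanishing outside a bounded interval has $\conv_r\varphi\equiv 0$.

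\textbf{What the paper does instead.} It seeks a minorant that is \emph{affine} in $r$, namely $\dist((x,u,r),\D)\ge c_1u^2-c_2-\gamma ur$ (Lemma~\ref{lemma:dist}). It then kills the indefinite term by testing the weak form of \eqref{eq:main} with $\psi\equiv 1$, which gives $\int_\Omega ru\dd x=0$ on $\cC$. The result is the $r$-independent, weakly lower semicontinuous minorant $c_1\|u\|_2^2-c_2|\Omega|$ of $I$, hence of $\scm I$. From there $\iota_R$ and $\int_\Omega|\nabla u|^2\dd x=\int_\Omega ru^2\dd x$ finish as in your first step.

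\textbf{For $a\neq 0$ the gap cannot be closed.} Since $b>0$, the whole line $\{0\}\times\R$ lies in $\mathcal S_{a,b}$. So for $\D=\Omega\times\mathcal S_{a,b}$ one has $\dist((x,u,r),\D)\le u^2$ for every $r$. This contradicts the bound of Lemma~\ref{lemma:dist}(a): for $\gamma\neq 0$ its right-hand side is unbounded in $r$, and for $\gamma=0$ it fails at the points $(\lambda,\lambda/a)\in\mathcal S_{a,b}$ as $\lambda\to\infty$. The reduction to $b=0$ in that proof is invalid, because $\mathcal S_{a,b}$ is not contained in a bounded neighbourhood of $\mathcal S_{a,0}$.

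In fact $\scm I(\lambda,0)=0$ for every constant $\lambda>0$. By Lemma~\ref{scm_alt_def} it suffices to let $r_n$ oscillate finely between $\rho_+=\lambda/a$ (where the distance vanishes) on a volume fraction $\theta$, and $\rho_-=-\theta\rho_+/(1-\theta)$ elsewhere. Then $r_n\stackrel{*}{\rightharpoonup}0$ and the cost is asymptotically at most $(1-\theta)\lambda^2|\Omega|$. Letting $\theta\to1$ and using $\iota_R(0)=0$, $\scm I+\iota_R$ vanishes on the unbounded family $(\lambda,0)$. So (a) holds only for $a=0$; there your $\varphi$ does not depend on $r$, and your proof works as written.

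\textbf{Part (b).} Your jointly convex minorant for $|a|<2$ is a valid alternative to the paper's route. You give no argument for $|a|\ge 2$; there the paper combines Lemma~\ref{lemma:dist}(b) with $\int_\Omega ru\dd x=0$ to get $I\ge c_1(\|u\|_2^2+\|r\|_2^2)-c_2|\Omega|$ on $\cC$. Your hesitation about the $H^1$ bound is justified. The paper's ``as before'' tacitly uses $\|r\|_\infty\le R$, which is not available in (b), so only $L^2$-type coercivity is actually established. Your H\"older/Gagliardo--Nirenberg absorption does give the $H^1$ bound for $N\le 3$, but no $L^\infty$ bound on $r$ follows.
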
 

\begin{remark}
 The set $\mathcal S_{a,b}$ consists of the points bounded by the $r$-axis and the right branch of a hyperbola. If $a=0$, however, $\mathcal S_{a,b}$ degenerates into the region between two vertical lines. Regarding $\Sigma_{a,b}$, it contains the points within an ellipse if $|a|<2$, and the region between the two branches of a hyperbola otherwise.
See Figures~\ref{fig:data:sets1} and \ref{fig:data:sets2} for a possible shape of $\mathcal S_{a,b}$ and $\Sigma_{a,b}$. 
\end{remark}

\begin{figure}[h!]
 \label{fig:data:sets}
 \centering
 \begin{minipage}{0.48\textwidth}
  \centering
  \begin{tikzpicture}[scale=0.85]
      \begin{axis}[
      axis lines = middle,
      xlabel = {$u$},
      ylabel = {$r$},
      ymin = -4, ymax = 8,
      xmin = -1, xmax = 7,
      xtick = {1,2,3,4,5,6},
      ytick = {-3,-2,-1,1,2,3,4,5,6,7},
      axis equal image,
      inner axis line style={-stealth},
      axis on top,
      ]
      \clip (axis cs:0,-4) rectangle (axis cs:7,8);
      \addplot [fill=red!20, draw=none, domain=0.05:7, samples=200] 
      ({x}, { (x^2 - 4)/(2.5*x) }) 
      -- (axis cs:7,8) -- (axis cs:0.05,8) -- cycle;
      \addplot [thick, red, domain=0.05:7, samples=200] 
      ({x}, { (x^2 - 4)/(2.5*x) });
      \draw[red, thick] (axis cs:0,-4) -- (axis cs:0,8);
      \end{axis}
  \end{tikzpicture}
  \caption{The set $\mathcal S_{a,b}$ for $a=2.5$ and $b=4$.}
  \label{fig:data:sets1}
 \end{minipage}
 \hfill
 \begin{minipage}{0.48\textwidth}
  \centering
  \begin{tikzpicture}[scale=0.85]
      \begin{axis}[
      axis lines = middle,
      xlabel = {$u$},
      ylabel = {$r$},
      ymin = -4, ymax = 8,
      xmin = -1, xmax = 7,
      xtick = {1,2,3,4,5,6},
      ytick = {-3,-2,-1,1,2,3,4,5,6,7},
      axis equal image,
      inner axis line style={-stealth},
      axis on top,
      ]
      \clip (axis cs:0,-4) rectangle (axis cs:7,8);
      \addplot [name path=sup, thick, blue, domain=0:7, samples=200] 
      ({x}, { 1.25*x + 0.5*sqrt(2.25*x^2 + 16) });
      \addplot [name path=inf, thick, blue, domain=0:7, samples=200] 
      ({x}, { 1.25*x - 0.5*sqrt(2.25*x^2 + 16) });
      \addplot [blue!20] fill between [of=sup and inf];
      \draw[blue, thick] (axis cs:0,-2) -- (axis cs:0,2);
      \end{axis}
  \end{tikzpicture}
  \caption{The set $\Sigma_{a,b}$ for $a=2.5$ and $b=4$.}
  \label{fig:data:sets2}
 \end{minipage}
\end{figure}

With this coercivity result in mind, let us state our main existence result.
\begin{theorem} \label{thm:ex_odds}
    If the data set $\D$ satisfies \eqref{eq:condD} then there exists an optimal data-driven solution $(u,r)\in \cC$ of \eqref{eq:main}. It is given as a minimizer of the functional $\scm I+\iota_R$ on $V$.
\end{theorem}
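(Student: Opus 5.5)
We apply the direct method of the calculus of variations to $F\coloneqq\scm I+\iota_R$ on $V$, equipped with the weak-weak$^\ast$ topology. The ingredients are properness, coercivity (Lemma~\ref{l:coercivity_omega}(a)), weak-weak$^\ast$ lower semicontinuity of $\scm I$ (Lemma~\ref{lemma_sc-}), and weak-weak$^\ast$ closedness of $\cC$. The last property is needed so that the minimizer lies in $\cC$, as Definition~\ref{def:ods} requires.

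\textbf{Properness.} We first check that $F\not\equiv\infty$. The pair $(u,r)=(0,0)$ belongs to $\cC$, satisfies $\iota_R(0)=0$, and gives
$$
I(0,0)=\int_\Omega \dist((x,0,0),\D)\dd x<\infty,
$$
because $\Omega$ is bounded and $\D\neq\emptyset$, so the integrand is bounded. Since $\scm I\le I$ (take the constant sequence), $m\coloneqq\inf_V F<\infty$. We then take a minimizing sequence $(u_n,r_n)$ with $F(u_n,r_n)\to m$. In particular $\|r_n\|_\infty\le R$ and $\scm I(u_n,r_n)<\infty$.

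\textbf{Compactness.} By Lemma~\ref{l:coercivity_omega}(a), valid under \eqref{eq:condD}, the sublevel sets of $F$ are bounded in $V$. Hence $u_n$ is bounded in $H^1(\Omega)$ and $r_n$ is bounded in $L^\infty(\Omega)$. Reflexivity and Banach--Alaoglu (with separability of $L^1$) give a subsequence with $u_n\rightharpoonup u$ in $H^1$ and $r_n\stackrel{*}{\rightharpoonup} r$. Weak$^\ast$ lower semicontinuity of the norm gives $\|r\|_\infty\le R$, so $\iota_R$ is lower semicontinuous along the sequence. Lemma~\ref{lemma_sc-} then yields
$$
F(u,r)\le\liminf_n F(u_n,r_n)=m,
$$
so $(u,r)$ is a minimizer.

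\textbf{Membership in $\cC$.} It remains to show that $(u,r)\in\cC$. This needs two facts: that $\scm I(u,r)<\infty$ forces $(u,r)\in\cC$, and that $\cC$ is weak-weak$^\ast$ sequentially closed. For closedness, take $(v_k,s_k)\in\cC$ with $v_k\rightharpoonup v$ in $H^1$ and $s_k\stackrel{*}{\rightharpoonup}s$. Rellich's theorem on the bounded Lipschitz domain gives $v_k\to v$ in $L^2$, and a further subsequence converges a.e., so $v\ge0$. In the weak formulation
$$
\int_\Omega\nabla v_k\cdot\nabla\varphi \dd x=\int_\Omega s_kv_k\varphi \dd x,\qquad \varphi\in H^1(\Omega),
$$
the left side converges by weak convergence. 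On the right, split $s_kv_k\varphi = s_k(v_k-v)\varphi+s_kv\varphi$. The first term tends to $0$ because $\|s_k\|_\infty$ is bounded and $v_k\to v$ strongly in $L^2$. The second converges since $v\varphi\in L^1$ and $s_k\stackrel{*}{\rightharpoonup}s$.

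\textbf{Main obstacle.} The delicate point is that the recovery sequences in the definition of $\scm I$ need not be bounded in $L^\infty$ a priori. I would handle this as follows. If $\scm I(u,r)<\infty$, there is a sequence $(v_k,s_k)\to(u,r)$ weak-weak$^\ast$ with $I(v_k,s_k)$ bounded, hence $(v_k,s_k)\in\cC$. Weak$^\ast$ convergent sequences in $L^\infty(\Omega)$ are norm bounded by the uniform boundedness principle, so $\sup_k\|s_k\|_\infty<\infty$ and the closedness argument applies. Alternatively one can quote the characterization of $\scm I$ given in the Appendix. This shows that $\scm I=\infty$ off $\cC$. Since $F(u,r)=m<\infty$, we conclude $(u,r)\in\cC$, which proves Theorem~\ref{thm:ex_odds}.
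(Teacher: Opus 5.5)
Your proposal is correct and follows essentially the same route as the paper. Both use the direct method with coercivity from Lemma~\ref{l:coercivity_omega}(a), Banach--Alaoglu, weak-weak$^\ast$ closedness of $\cC$ (Lemma~\ref{lemma_closed}) and lower semicontinuity of $\scm I$ (Lemma~\ref{lemma_sc-}). The differences are minor: you verify $\inf_V(\scm I+\iota_R)<\infty$ explicitly via $(0,0)\in\cC$, which tacitly uses $\D\neq\emptyset$, though the empty case is trivial. You also get $(u,r)\in\cC$ by showing $\scm I=\infty$ off $\cC$ through recovery sequences, whereas the paper chooses the minimizing sequence inside $\cC$ and applies closedness to it directly.
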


The proof of the main existence result appears at the end of this section. It requires a number of preliminary steps. The first is the weak-weak$^\ast$ closedness of the constraint $\cC$.

\begin{lemma} \label{lemma_closed}
 The differential constraint $\cC$ is weak-weak$^\star$ closed in $H^1(\Omega)\times L^\infty(\Omega)$. 
\end{lemma}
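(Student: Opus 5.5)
Since the weak-weak$^\ast$ topology is metrizable on bounded sets and weakly convergent sequences are bounded, it suffices to prove sequential closedness. Concretely, we take $(u_n,r_n)\in\cC$ with $u_n\rightharpoonup u$ in $H^1(\Omega)$ and $r_n\stackrel{*}{\rightharpoonup} r$ in $L^\infty(\Omega)$, and we must show $(u,r)\in\cC$. By the uniform boundedness principle, $\sup_n\|u_n\|_{H^1}<\infty$ and $\sup_n\|r_n\|_\infty<\infty$. The constraint is understood in the weak Neumann sense:
\[
\int_\Omega \nabla u_n\cdot\nabla\varphi \dd x=\int_\Omega r_n u_n\varphi \dd x \quad\text{for all } \varphi\in H^1(\Omega).
\]

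The first two properties pass to the limit directly. Since $\Omega$ is a bounded Lipschitz domain, Rellich's theorem gives $u_n\to u$ strongly in $L^2(\Omega)$ and, along a subsequence, a.e. in $\Omega$. Hence $u\ge0$ a.e.; alternatively, this follows because the set $\{u\ge0\}$ is convex and closed, hence weakly closed. The left-hand side of the weak formulation converges to $\int_\Omega\nabla u\cdot\nabla\varphi\dd x$ by weak convergence in $H^1$.

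The only genuinely nonlinear term is the product $r_nu_n$, and this is where compactness in $u$ is essential. For a fixed test function $\varphi\in H^1(\Omega)$ we split
\[
\int_\Omega r_nu_n\varphi-\int_\Omega ru\varphi=\int_\Omega r_n(u_n-u)\varphi+\int_\Omega (r_n-r)u\varphi .
\]
The first term is bounded by $\sup_n\|r_n\|_\infty\,\|u_n-u\|_2\|\varphi\|_2$, which tends to $0$ by Rellich. The second term tends to $0$ because $u\varphi\in L^1(\Omega)$ and $r_n\stackrel{*}{\rightharpoonup} r$.

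Passing to the limit in the weak formulation therefore gives the weak formulation for $(u,r)$ along the subsequence. Since the limit identity does not depend on the subsequence chosen, no issue arises. Thus $(u,r)\in\cC$.

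The only step needing care is the product term. Weak-star convergence of $r_n$ alone would not suffice, and it is the strong $L^2$ convergence of $u_n$ that closes the argument.
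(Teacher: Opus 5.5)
Your argument is correct and is essentially the paper's proof. Both use boundedness of $(r_n)$ in $L^\infty$, Rellich compactness giving $u_n\to u$ in $L^2(\Omega)$ and a.e.\ (hence $u\ge 0$), and the same splitting $r_nu_n-ru=r_n(u_n-u)+(r_n-r)u$ to pass to the limit in the product term. One caveat about your opening remark: like the paper, you actually prove \emph{sequential} weak-weak$^\ast$ closedness, which is all the paper uses later, but metrizability on bounded sets does not by itself turn sequential closedness into topological closedness for a non-convex set such as $\cC$ (Krein--\v{S}mulian needs convexity), so you should not rely on that reduction.
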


\begin{proof}
 Let $(u_n,r_n)\subset\cC$ so that
 \begin{equation} \label{eq:weak_n}
 \int_\Omega \nabla u_n \nabla \psi \dd x = \int_\Omega r_n u_n \psi \dd x,
 \end{equation}
 for all $\psi\in C^\infty(\Omega)$. If we suppose that $u_n\rightharpoonup u$ in $H^1(\Omega)$ and $r_n\stackrel{*}{\rightharpoonup}r$ in $L^\infty(\Omega)$ for some $(u,r)\in V$ then $\|r_n\|_\infty\le C$ with $C>0$, $\nabla u_n\rightharpoonup \nabla u$ in $L^2(\Omega)$ and, up to a subsequence, $u_n\to u$ strongly in $L^2(\Omega)$ and a.e. in $\Omega$ so that $u\ge 0$. Moreover, $u_n r_n\rightharpoonup ur$ in $L^2(\Omega)$ since for all $\psi\in C^\infty(\Omega)$ we have that 
 \begin{equation}\label{eq_conv01}\begin{aligned}
 \int_\Omega (u_n r_n-ur ) \psi\dd x&=\int_\Omega (u_n-u) r_n \psi+u(r_n-r)\psi\dd x\\&\leq C \|u_n-u\|_2\|\psi\|_2 +\int_\Omega(r_n-r)u\psi\dd x\to 0 
 \end{aligned}\end{equation}
 as $n\to\infty$ since $u\psi\in L^1(\Omega)$. This allows us to pass to the limit $n\to \infty$ in \eqref{eq:weak_n} and get that $(u,r)$ is a weak solution to $-\Delta u = ru$ in $\Omega$, $\partial_\nu u=0$ on $\partial\Omega$ with $u\geq 0$. 
\end{proof}

Next we consider another way of characterizing the coercivity conditions \eqref{eq:condD}, \eqref{eq:condD_new}.  

\begin{lemma}\label{lemma:dist}
Let $\D$ be a given data set. 
\begin{itemize} 
\item[(a)] The following are equivalent:
\begin{enumerate}
    \item[(i)] There exist $c_1,c_2>0$ and $\gamma \in \R$ such that 
\begin{align}\label{eq:2_coercive}
 \operatorname{dist}\left((x,u,r), \D \right)\ge c_1 u^2 -c_2- \gamma ur \quad \text{for all} \,\, (x,u,r)\in Y.
\end{align}
\item[(ii)] \eqref{eq:condD} holds for some $a\in\R, b>0$, i.e.,  $\D \subseteq \Omega\times  \mathcal S_{a,b}$.
   \end{enumerate}
\item[(b)] The following are equivalent:
\begin{enumerate}
    \item[(iii)] There exist $c_1,c_2>0$ and $\gamma \in \R$ such that 
\begin{align}\label{eq:2_coercive_new}
 \operatorname{dist}\left((x,u,r), \D \right)\ge c_1 (u^2+r^2) -c_2- \gamma ur \quad \text{for all} \,\, (x,u,r)\in Y.
\end{align}
\item[(iv)] \eqref{eq:condD_new} holds for some $a\in\R, b>0$, i.e. $\D\subseteq \Omega\times \Sigma_{a,b}$.
   \end{enumerate}
\end{itemize}
\end{lemma}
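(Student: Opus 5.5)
The plan is to prove the easy implications (i)$\Rightarrow$(ii) and (iii)$\Rightarrow$(iv) by evaluating the inequality at data points. The converses (ii)$\Rightarrow$(i) and (iv)$\Rightarrow$(iii) will follow from a geometric comparison between the two-dimensional set containing the projected data and a slightly enlarged cone. Throughout I use that for $(\tilde x,\tilde u,\tilde r)\in\D$ the $x$-contribution $|x-\tilde x|^2$ is nonnegative. Hence
\[
\dist((x,u,r),\D)\ \ge\ \inf_{(\tilde u,\tilde r)\in P}\bigl(|u-\tilde u|^2+|r-\tilde r|^2\bigr)=:d_P(u,r)^2,
\]
where $P$ is the projection of $\D$ onto the $(u,r)$-plane, and $P\subseteq\mathcal S_{a,b}$ (resp.\ $P\subseteq\Sigma_{a,b}$).

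\emph{(i)$\Rightarrow$(ii) and (iii)$\Rightarrow$(iv).} Take $(\tilde x,\tilde u,\tilde r)\in\D$. Its distance to $\D$ is $0$, so (i) gives $c_1\tilde u^2-c_2-\gamma\tilde u\tilde r\le 0$. This means $\tilde u^2\le a\tilde u\tilde r+c_2/c_1<a\tilde u\tilde r+b$ with $a\coloneqq\gamma/c_1$ and $b\coloneqq c_2/c_1+1$. The strict inequality comes from enlarging $b$. Part (b) is identical with $u^2$ replaced by $u^2+r^2$.

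\emph{(ii)$\Rightarrow$(i).} I split into cases by the sign of $a$.
\begin{itemize}
\item If $a=0$, then $P$ lies in the strip $0\le u<\sqrt b$. Hence $d_P(u,r)^2\ge (u-\sqrt b)_+^2\ge \tfrac12u^2-b$, which gives (i) with $\gamma=0$.
\item If $a>0$, the boundary $r=(u^2-b)/(au)$ of $\mathcal S_{a,b}$ is asymptotic to the line $r=u/a$. Consider the wider cone $K\coloneqq\{(u,r): u\ge0,\ r\ge u/(2a)\}$. Outside $K$ the distance to $\mathcal S_{a,b}$, hence to $P$, grows linearly: $d_P(u,r)\ge\kappa\,|(u,r)|-C$ for some $\kappa,C>0$. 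Squaring gives $d_P^2\ge \tfrac{\kappa^2}{2}(u^2+r^2)-C'$ there.
\item If $a<0$, reflect $r\mapsto -r$ and argue as for $a>0$.
\end{itemize}
In the case $a>0$ I now choose $\gamma\coloneqq 2ac_1$. Inside $K$ the right-hand side of \eqref{eq:2_coercive} equals $c_1u(u-2ar)-c_2\le -c_2$, so the inequality is trivial there. Outside $K$ the right-hand side is at most $c_1(u^2+2a u|r|)-c_2\le c_1(1+a)(u^2+r^2)-c_2$. This is dominated by $\tfrac{\kappa^2}{2}(u^2+r^2)-C'$ once $c_1$ is small and $c_2\ge C'$.

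\emph{(iv)$\Rightarrow$(iii).} Write $q_a(u,r)=u^2+r^2-aur$.
\begin{itemize}
\item If $|a|<2$, then $q_a$ is positive definite and $\Sigma_{a,b}$ is bounded. So $d_P^2\ge\tfrac12(u^2+r^2)-C$, and we may take $\gamma=0$.
\item If $|a|\ge2$, say $a\ge 2$, pick $a'>a$. Then $\{q_{a'}\le 0\}\cap\{u\ge0\}$ is a closed cone that strictly contains the asymptotic cone $\{q_a\le 0\}$ of $\Sigma_{a,b}$. In the degenerate case $a=2$ this asymptotic cone is the line $u=r$.
\end{itemize}
For $a\ge2$ I again aim at a linear distance bound $d_P\ge\kappa|(u,r)|-C$ outside the wider cone. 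Then I set $\gamma=c_1a'$. Inside the wider cone the right-hand side of \eqref{eq:2_coercive_new} is $c_1q_{a'}-c_2\le -c_2$. Outside it is at most $c_1(1+|a'|)(u^2+r^2)-c_2$, and I conclude exactly as in part (a).

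\emph{Main obstacle.} The key step is the linear separation estimate: outside a strictly wider cone, the distance to $\mathcal S_{a,b}$ (resp.\ $\Sigma_{a,b}$) is at least $\kappa|(u,r)|-C$. Note that choosing $\gamma$ to match $a$ exactly would fail. Near the hyperbola the squared distance only grows like $q^2/|p|^2$, not like $q$, which is why the slack $a'>a$ (resp.\ the factor $2a$) is needed. I would prove the separation by showing that each set lies in a $C$-neighbourhood of its asymptotic cone intersected with $\{u\ge0\}$. Then I would use that the angular gap between this cone and the complement of the wider cone makes the distance grow proportionally to $|(u,r)|$.
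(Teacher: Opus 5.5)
The easy directions (i)$\Rightarrow$(ii) and (iii)$\Rightarrow$(iv) are fine. Your enlargement $b=c_2/c_1+1$ is actually needed to get the strict inequality defining $\mathcal S_{a,b}$. The case $a=0$ is also fine, and so is part (b). For $a\ge 2$, the set $\Sigma_{a,b}\cap\{r\le 0\}$ is bounded, because $u^2+r^2\le u^2+r^2-aur<b$ there. So $\Sigma_{a,b}$ really does lie in a bounded neighbourhood of the cone between its two asymptotes.

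\textbf{The gap.} The failing step is the linear separation estimate in the case $a\neq 0$ of (ii)$\Rightarrow$(i). Since $b>0$, the condition defining $\mathcal S_{a,b}$ reads $0<b$ at $u=0$. Hence $\{0\}\times\R\subseteq\mathcal S_{a,b}$: the set contains the whole $r$-axis. (For $a>0$ the branch $r=(u^2-b)/(au)$ tends to $-\infty$ as $u\to 0^+$.) So for $a>0$ the point $(1,-R)$ lies outside your cone $K$, yet has squared distance at most $1$ to $\mathcal S_{a,b}$. Thus $d_P\ge\kappa|(u,r)|-C$ outside $K$ is false. In terms of your plan: $\mathcal S_{a,b}$ does lie in a bounded neighbourhood of $\{u\ge 0,\ u^2\le aur\}$. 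But that set contains the negative $r$-axis, which lies in the complement of $K$, so there is no angular gap to exploit.

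\textbf{Why it cannot be patched.} The implication (ii)$\Rightarrow$(i) is false for $a\neq 0$. Take $a>0$, $b>0$ and $\D=\Omega\times\mathcal S_{a,b}$, so that $\dist((x,u,r),\D)$ is the squared distance from $(u,r)$ to $\mathcal S_{a,b}$.
\begin{itemize}
\item Since $(u,u/a+1)\in\mathcal S_{a,b}$ for every $u\ge 0$, (i) would give $(c_1-\gamma/a)u^2-\gamma u-c_2\le 0$ for all $u\ge 0$. This forces $\gamma\ge ac_1>0$.
\item Since $(0,-R)\in\mathcal S_{a,b}$, (i) at $(x,1,-R)$ would give $1\ge c_1-c_2+\gamma R$ for all $R>0$. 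This forces $\gamma\le 0$.
\end{itemize}
For $a<0$, reflect $r\mapsto -r$.

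\textbf{The paper's proof.} It has the same defect, hidden in its reduction ``we can suppose $b=0$''. The set $\mathcal S_{a,0}$ does not contain the $r$-axis, and $\mathcal S_{a,b}$ is not within bounded distance of it.

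\textbf{What is true.} The equivalence holds with $\mathcal S_{a,b}$ replaced by $\mathcal S_{a,b}\cap\{ar\ge -C\}$ for some $C\ge 0$, and your cone argument proves the nontrivial direction verbatim. The cutoff puts the projected data in a bounded neighbourhood of the closed cone $\{u\ge 0,\ ar\ge u\}$. That cone is separated from the complement of $K$ by a positive angle, which gives your linear bound. Conversely, suppose (i) holds and $(x_0,u_0,r_0)\in\D$. Testing (i) at $(x_0,u_0+1,r_0)$, where the distance is at most $1$, yields $\gamma r_0\ge -(1+c_2)$. With $a=\gamma/c_1$, this is exactly such a cutoff.
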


\begin{proof} Part (a):
(i) $\Rightarrow$ (ii): Suppose that \eqref{eq:2_coercive} holds. Then, for all $(x,u,r)\in\D$ we have $0\ge c_1 u^2 -c_2- \gamma ur$ and hence $u^2\le aur +b$ with $a= \gamma/c_1\in\R$ and $b= c_2/c_1>0$.

\medskip

(ii) $\Rightarrow$ (i): Consider the set $\D'\coloneqq \Omega\times \mathcal S_{a,b}$. Since $\D\subseteq \D'$ we get that $\dist(\cdot,\D)\geq \dist(\cdot, \D')$ and hence it is sufficient to establish \eqref{eq:2_coercive} for $\D'$. Moreover, since $\dist((x, u, r), \D')\ge \dist((u, r), \mathcal S_{a,b})$ it is sufficient to establish the lower estimate 
\begin{equation} \label{eq:dist_hom}
\dist((u, r), \mathcal S_{a,b}) \geq c_1 u^2-c_2-\gamma ur \mbox{ for all } (u,r)\in [0,\infty)\times\R.
\end{equation}
As in \cite[Lemma 5.9]{LSS23} we can suppose $b=0$. 
Since $(u,r)\in \mathcal S_{a,0}$ implies $(\lambda u, \lambda r)\in  \mathcal S_{a,0}$ for all $\lambda>0$ we see that the distance function is homogeneous, that is 
\begin{equation}\label{eq_hom}
\dist\left( (\lambda u,\lambda r), \mathcal S_{a,0} \right) = \lambda^2 \dist\left( (u,r), \mathcal S_{a,0} \right).
\end{equation}
Let $S^+=\{u^2+r^2 = 1, u\geq 0\}$ be the right half of the unit sphere. The homogeneity observation \eqref{eq_hom} implies that it is enough to establish \eqref{eq:dist_hom} on $S^+$. Since the set
$$
S^+\cap E \mbox{ with } E \coloneqq \{u^2 \geq 2aur\}
$$
is compact and has positive distance to $ \mathcal S_{a,0}$, there exists $\eta>0$ such that
$$
\dist((u, r), \mathcal S_{a,0}) \geq \eta \mbox{ for all } (u,r)\in S^+\cap E.
$$
With $M\coloneq \max _{(\tilde u, \tilde r) \in E}\left(\tilde u^2-2a \tilde u\tilde r\right)$ this implies for $(u,r)\in S^+\cap E$ that
\begin{equation}\label{boundduv}
 \dist((u, r),  \mathcal S_{a,0}) \ge \eta \ge \frac{\eta}{M}\left(u^2-2aur\right).
\end{equation}
This also holds for $(u,r)\in S^+\cap E^c$ since then $u^2-2aur< 0$. This finishes the proof of Part (a).

\medskip
Part (b): Up to some minor details, the proof is the same as for Part (a).
\end{proof}

Thanks to the previous lemma we can prove Lemma~\ref{l:coercivity_omega}.
\begin{proof}[Proof of Lemma~\ref{l:coercivity_omega}]
Part (a):
Clearly, since $\cC$ is weak-weak$^\ast$ closed, the functional $\tilde I\coloneqq \infty\chi_{V\setminus\cC}$ is weak-weak$^\ast$ lower-semicontinuous and below $I$. By the alternative definition \eqref{def:hatsc-} of $\scm I$  by Lemma~\ref{lemma_sc-}, we find $\scm I\geq \tilde I$ so that in particular $\scm I=\infty$ on $V\setminus \cC$. Next consider $(u,r)\in \cC$.
Then, by Lemma~\ref{lemma:dist}(a), $\dist(\cdot, \D)$ satisfies \eqref{eq:2_coercive} so that 
 $$
 I(u,r) = \int_\Omega \dist((x,u,r), \D )\dd x \ge \int_\Omega \left( c_1 u^2- c_2 -\gamma u r \right)\dd x = c_1\|u\|_2^2 -c_2|\Omega|,
 $$
 because $(u,r)$ satisfies \eqref{eq:main}. Let us check that also $\scm I$ satisfies the same lower bound. The functional $\check I(u,r) \coloneqq c_1\|u\|_2^2 -c_2|\Omega|$ for $(u,r)\in V$, which is a lower bound for $I$, is also weak-weak$^\ast$ lower semi-continuous since $H^1(\Omega)$ compactly embedds into $L^1(\Omega)$ and $L^\infty(\Omega)=L^1(\Omega)^\ast$. Now we use again the alternative definition \eqref{def:hatsc-} of $\scm I$ and Lemma~\ref{lemma_sc-} to get 
 $$
 \scm I(u,r)+\iota_R(r) \geq c_1\|u\|_2^2 -c_2|\Omega|+ \iota_R(r)
 $$
 for all $(u,r)\in V$. Coercivity in unbounded $r$-directions stems from $\iota_R$. For sequences $(u_n,r_n)$ in $\cC$ with $\|r_n\|_\infty\leq R$, \eqref{eq:main} implies $\|\nabla u_n\|_2^2 \leq R\|u_n\|_2^2$ so that $L^2$-coercivity in the $u$-direction implies $H^1$-coercivity.
 \medskip
 
Part (b): It suffices to consider $(u,r)\in \mathcal{C}$. Similarly to part (a),  \eqref{eq:2_coercive_new} from Lemma~\ref{lemma:dist}(b) yields
$$
I(u,r) \ge \int_\Omega \left( c_1 (u^2 + r^2)- c_2 -\gamma u r \right)\dd x = c_1(\|u\|_2^2 + \|r\|_2^2)-c_2|\Omega|
$$
form which we find the $H^1$-$L^2$-coercivity of $I$ as before.
Since the lower bound $(u,r) \mapsto c_1\|u\|_2^2 -c_2|\Omega|$ of $I$ is weak-weak$^\ast$ lower semi-continuous this also implies the $H^1$-$L^2$-coercivity of $\scm I$.
\end{proof}

We can finally prove the main result of this section.
\begin{proof}[Proof of Theorem~\ref{thm:ex_odds}]
Let us consider a minimizing sequence $(u_n,r_n) \subset \cC$ for $m\coloneqq \inf_V (\scm I + \iota_R)=\inf_{\mathcal C} (\scm I+\iota_R)$. By the coercivity result of Lemma~\ref{l:coercivity_omega}{(a)} the sequence $(u_n)$ is bounded in $H^1(\Omega)$ and $\|r_n\|_\infty\leq R$. So, up to subsequences $u_n\rightharpoonup u$ in $H^1(\Omega)$ and $r_n\stackrel{*}{\rightharpoonup} r$ in $L^\infty(\Omega)$ by Banach-Alaoglu Theorem (see e.g. \cite[Theorem 3.16]{brezis}), for some $(u,r)\in H^1(\Omega)\times L^\infty(\Omega)$ with $\|r\|_\infty\leq \liminf_n \|r_n\|_\infty\leq R$. Since  $\mathcal{C}$ is weak-weak$^\ast$ closed by Lemma~ \ref{lemma_closed} we have $(u,r)\in\mathcal{C}$ and the functional $\iota_R$ vanishes both on the sequence $r_n$ and its weak$^\ast$-limit $r$. By the weak-weak$^*$ lower semi-continuity of $\scm I$ from Lemma~\ref{lemma_sc-} we see that $(u,r)$ is a minimizer of $\scm I+\iota_R$. 
\end{proof}

\section{Consistency}\label{sec_consistency}
Let us suppose that the data set $\D$ stems from a given reproduction law $(x, u)\mapsto \varrho(x,u)$ and that it contains all possible data points, i.e.,
\begin{equation}
\label{eq:def_D}
\D = \{(x,u,\varrho(x,u)): x\in\Omega, u \in [0,\infty)\}.
\end{equation}
We can now consider optimal data-driven solutions of \eqref{eq:main} for this data set and we can also consider weak solutions $U\in H^1(\Omega)$ of 
\begin{equation} \label{eq:main2} 
-\Delta U= \varrho(x,U) U \mbox{ in } \Omega, \quad \partial_\nu U =0, \mbox{ on } \partial \Omega.
\end{equation}
The question is: under what condition on $\varrho$ can we say that every optimal data-driven solution of \eqref{eq:main} also satisfies \eqref{eq:main2}? The answer is given by the following theorem.

\begin{theorem} \label{thm:consistency} Let $\Omega\subset \R^N$ be a bounded convex domain and let $R>0$ be the cut-off value in the penalty functional $\iota_R$. Suppose $\varrho: \Omega\times [0,\infty)\to \R$ is a $C^2$-function such that $\sup_{x\in \Omega}|\varrho(x,0)|\leq R$, $\|D^2\varrho\|\leq \frac{1}{2R}$  for any (sub-multiplicative) matrix norm and one of the following three conditions holds:
\begin{itemize}
    \item[(a)] $\|\varrho\|_\infty \leq R$,
    \item[(b)] $\varrho$ is concave and $\|\varrho^+\|_\infty \leq R$,
    \item[(c)] $\varrho$ is convex and $\|\varrho^-\|_\infty \leq R$.
\end{itemize}
If the data set is given by \eqref{eq:def_D} then any optimal data-driven solution $(u,r)$ is such that $u$ is also a weak solution of \eqref{eq:main2}. Reversely, if $u$ is a weak solution of \eqref{eq:main2} and $\sup_{x\in \Omega} |\varrho(x,u(x))|\leq R$ then the pair $\left(u(\cdot),\varrho(\cdot, u(\cdot))\right)$ is an optimal data-driven solution of \eqref{eq:main}.
\end{theorem}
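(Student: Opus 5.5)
The strategy is to show that the minimum value of $\scm I+\iota_R$ is $0$, and that it is attained exactly at pairs $(u,r)$ with $r=\varrho(\cdot,u)$ a.e.

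\textbf{Step 1: zero cost is attainable.} The pair $(U,r)=(0,\varrho(\cdot,0))$ lies in $\cC$. Since $\sup_x|\varrho(x,0)|\le R$, it satisfies $\iota_R(r)=0$. Every point $(x,0,\varrho(x,0))$ belongs to $\D$, so $I(0,\varrho(\cdot,0))=0$. Because $0\le \scm I\le I$, the minimum of $\scm I+\iota_R$ equals $0$.

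\textbf{Step 2: the reverse direction.} Let $u$ be a weak solution of \eqref{eq:main2} with $|\varrho(\cdot,u)|\le R$. Then $(u,\varrho(\cdot,u))\in\cC$; here $u\ge0$ holds because $\varrho$ is only defined on $[0,\infty)$. The integrand vanishes pointwise, so $I=0$, and hence $\scm I+\iota_R=0$. Such a pair is therefore a minimizer, i.e.\ an optimal data-driven solution.

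\textbf{Step 3: the forward direction.} Let $(u,r)$ be an optimal data-driven solution. Then $\scm I(u,r)=0$ and $\|r\|_\infty\le R$. By Theorem~\ref{thm:main_Istar_scm_I} this gives $\int_\Omega \conv_r f(x,u,r)\dd x=0$, so $\conv_r f(x,u(x),r(x))=0$ a.e. It remains to show that
\[
\conv_r f(x,u,r)=0 \quad\text{with}\quad |r|\le R
\]
forces $r=\varrho(x,u)$. Once this is known, $(u,r)\in\cC$ gives that $u$ solves \eqref{eq:main2}.

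Concretely, I would find an explicit convex-in-$r$ minorant of $f(x,u,r)=\inf_{y,v}\big(|x-y|^2+|u-v|^2+|r-\varrho(y,v)|^2\big)$ that vanishes only at $r=\varrho(x,u)$, at least in the admissible range. The infimum runs over $y\in\Omega$, and convexity of $\Omega$ is used here, since segments from $x$ to $y$ stay in $\Omega$. Set $z=(y-x,v-u)$. By Taylor's formula,
\[
\varrho(y,v)=\varrho(x,u)+\nabla\varrho(x,u)\cdot z+E, \qquad |E|\le \tfrac{1}{4R}|z|^2 .
\]
Put $s=r-\varrho(x,u)$. Then
\[
f \ge \inf_z\Big(|z|^2+\big(|s-\nabla\varrho\cdot z|-\tfrac{|z|^2}{4R}\big)_+^2\Big).
\]
The aim is to bound this infimum below by $c\,s^2$, or by some $g(s)$ that is convex and vanishes only at $s=0$, as long as $|r|\le R$ and the relevant $|s|$ stay bounded. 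Near the point, the minimisation over $z$ gives the quadratic $s^2/(1+|\nabla\varrho|^2)$, which is convex. The quadratic error term with constant $1/(2R)$ is what should keep the remaining part nonnegative. One expects that $|z|\lesssim |s|\lesssim 2R$, using $|r|\le R$ together with the bounds (a)/(b)/(c) on $\varrho$.

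In cases (b) and (c) I would use concavity or convexity directly. If $\varrho$ is concave, then $\varrho(y,v)\le\varrho(x,u)+\nabla\varrho\cdot z$. Hence for $s\ge0$ one gets $r-\varrho(y,v)\ge s-\nabla\varrho\cdot z$, which yields the exact lower bound $s^2/(1+|\nabla\varrho|^2)$. For $s<0$, the bound $\|\varrho^+\|\le R$ together with $r\ge -R$ is what controls the other side; case (c) is symmetric. If no global convex minorant can be produced, it suffices to show the zero set property directly. That is, suppose $r_0\ne\varrho(x,u)$ with $|r_0|\le R$, and suppose $0=\conv_r f(r_0)=\inf\sum\lambda_i f(r_i)$ with $\sum\lambda_i r_i=r_0$. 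Then points $r_i$ with small $f$ would have to lie near the curve $\{\varrho(y,v)\}$ both above and below $r_0$. The curvature bound should forbid this within the ball of radius comparable to $R$.

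\textbf{Main obstacle.} I expect the main difficulty to be this last step: proving that the partial convex envelope vanishes only on the graph $r=\varrho(x,u)$ within $|r|\le R$. Exact constants matter here. The Hessian bound $1/(2R)$ has to be matched against the box $|r|\le R$ and the sup bounds in (a)–(c).
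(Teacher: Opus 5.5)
Your Steps 1--3 reproduce the paper's reduction correctly. The competitor $(0,\varrho(\cdot,0))$ is the one used in the proof of Theorem~\ref{thm:character_minI}, and Step~3 invokes $\scm I=I^\ast$ exactly as the paper does. But the argument stops where the content of the theorem lies: you never prove that $\conv_r f(x,u,r)=0$ with $|r|\le R$ forces $r=\varrho(x,u)$, and you say so yourself. That is a genuine gap.

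The paper closes it with Lemma~\ref{lem:marginal_convexity}. For frozen $(x,u)$, the function $\tau(\tilde x,\tilde u,r)=|x-\tilde x|^2+|u-\tilde u|^2+|r-\varrho(\tilde x,\tilde u)|^2$ is jointly convex on the convex set $\Omega\times[0,\infty)\times[-R,R]$. Its Hessian is positive semidefinite as soon as $\Id+(\varrho-r)D^2\varrho$ is (the Schur complement with respect to the $r$-entry). That holds because $\|D^2\varrho\|\le\frac{1}{2R}$, while (a)/(b)/(c) give $|r-\varrho|\le 2R$ or its one-sided analogue. Minimizing over $(\tilde x,\tilde u)$ preserves convexity. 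The paper uses this to get $f=\conv_r f$, hence $I^\ast=I$, for $|r|\le R$. Then $\scm I(u,r)=0$ gives $I(u,r)=0$ and $r=\varrho(\cdot,u)$ directly, with no zero-set analysis. Strictly, convexity on $[-R,R]$ alone does not identify the global envelope. The supporting line comes from the same bound: put $\sigma=r_0-\varrho(z^\ast)$ for a minimizing $z^\ast=(\tilde x,\tilde u)$ at $r_0$. Then $z\mapsto|z-(x,u)|^2-2\sigma\varrho(z)$ is convex and minimized at $z^\ast$, whence $f(x,u,r)\ge f(x,u,r_0)+2\sigma(r-r_0)$ for every $r\in\R$.

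Your heuristics for the missing step also point the wrong way. A convex minorant valid only for $|r|\le R$, or on a ball of radius comparable to $R$, says nothing about $\conv_r f$. That is an envelope over all $r\in\R$: in $\sum_i\lambda_i r_i=r_0$ the $r_i$ may lie far outside $[-R,R]$ with tiny weights. Nor must points $r_i$ with small $f(x,u,r_i)$ lie near the curve above and below $r_0$. With $(x,u)$ frozen, continuity of $\varrho$ forces them close to the single value $\varrho(x,u)$. The real threat is a far-away $r_i$ carrying weight of order $|r_0-\varrho(x,u)|/|r_i|$. That is excluded by at least linear growth of $f$ in $r$, not by curvature near the graph.

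Your own Taylor estimate gives this growth if you use it for all $r$. Set $s=r-\varrho(x,u)$, $G=|\nabla\varrho(x,u)|$ and $z=(\tilde x-x,\tilde u-u)$. Either $G|z|+|z|^2/(4R)\le|s|/2$, so that $|r-\varrho(\tilde x,\tilde u)|^2\ge s^2/4$, or $|z|^2\ge\min\{s^2/(16G^2),R|s|\}$. Hence $f(x,u,r)\ge\min\{as^2,R|s|\}$ on all of $\R$, with $a=\min\{\frac14,\frac{1}{16G^2}\}$ (read as $\frac14$ if $G=0$). Define the Huber function to equal $as^2$ for $|s|\le R/(2a)$ and $R|s|-R^2/(4a)$ otherwise. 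It is convex, lies below this bound, and vanishes only at $s=0$, so $\conv_r f(x,u,r)>0$ for $r\neq\varrho(x,u)$.

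Completed this way, your route gives only the zero-set property, not $\conv_r f=f$. But it needs only some bound on $D^2\varrho$, convexity of $\Omega$, and $\sup_x|\varrho(x,0)|\le R$. It does not need the constant $\frac{1}{2R}$ or conditions (a)--(c).
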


This result follows directly from the next theorem through the equivalence of (i) and (iii).

\begin{theorem} \label{thm:character_minIstar}
Under the assumptions of Theorem~\ref{thm:consistency} the following three statements are equivalent:
\begin{enumerate}
\item[(i)] $u$ solves \eqref{eq:main2} and 
$\sup_{x\in \Omega} |\varrho(x)|\leq R$ where $\varrho(x)\coloneqq \varrho(x,u(x))$
\item[(ii)] $\scm I(u,\varrho)+\iota_R(\varrho)=0$ 
\item[(iii)] $(u,\varrho)=\argmin \,(\scm I+\iota_R)$ 
\end{enumerate}
\end{theorem}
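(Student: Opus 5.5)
Throughout, read the pair in (ii) and (iii) as $(u,r)$ with $r\in L^\infty(\Omega)$. In (i), $r$ is required to equal $\varrho(\cdot,u(\cdot))$. The plan is to prove (i)$\Rightarrow$(ii)$\Rightarrow$(iii)$\Rightarrow$(ii)$\Rightarrow$(i), using Theorem~\ref{thm:main_Istar_scm_I}, i.e. $\scm I=I^\ast$, for the last implication.

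\textbf{Easy implications.}
\begin{itemize}
\item (i)$\Rightarrow$(ii): If $u$ solves \eqref{eq:main2} with $\|\varrho(\cdot,u)\|_\infty\le R$, then $(u,\varrho(\cdot,u))\in\cC$. Every point $(x,u(x),\varrho(x,u(x)))$ lies in $\D$, so $I(u,\varrho(\cdot,u))=0$. Since $0\le \scm I\le I$ and $\iota_R(\varrho(\cdot,u))=0$, (ii) follows.
\item (ii)$\Rightarrow$(iii): This is immediate, because $\scm I+\iota_R\ge 0$.
\item (iii)$\Rightarrow$(ii): I would show that the minimum value is $0$. The pair $(0,\varrho(\cdot,0))$ lies in $\cC$, since $u\equiv 0$ trivially solves the Neumann problem. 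It has $I=0$, and by the assumption $\sup_x|\varrho(x,0)|\le R$ also $\iota_R=0$. Hence any minimizer has value $0$.
\end{itemize}

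\textbf{(ii)$\Rightarrow$(i): setup.} From (ii) we get $(u,r)\in\cC$, since $\scm I=\infty$ off $\cC$ (proof of Lemma~\ref{l:coercivity_omega}). We also get $\|r\|_\infty\le R$ and, by Theorem~\ref{thm:main_Istar_scm_I}, $\conv_r f(x,u(x),r(x))=0$ for a.e.\ $x$. The task is to show that this forces $r(x)=\varrho(x,u(x))$; then $u$ solves \eqref{eq:main2}. Two ingredients are needed.

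\textbf{Ingredient 1: local convexity in $r$.} I want $f_{x,u}$ to be convex on the interval $[-R,R]$. Write
$$f_{x,u}(r)=\inf_{(y,v)\in\Omega\times[0,\infty)} g(r,y,v),\qquad g(r,y,v)=|x-y|^2+|u-v|^2+(r-\varrho(y,v))^2 .$$
The joint Hessian of $g$ in $(r,y,v)$ is
$$2\,\nabla_{(r,y,v)}(r-\varrho)\otimes\nabla_{(r,y,v)}(r-\varrho)+2\,\mathrm{diag}(0,I)-2(r-\varrho)\,\mathrm{diag}(0,D^2\varrho).$$
The rank-one part is positive semidefinite in all directions, including $r$. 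The $(y,v)$-block is at least $\bigl(2-|r-\varrho|/R\bigr)I$ by the bound $\|D^2\varrho\|\le\frac1{2R}$, so it is positive semidefinite whenever $|r-\varrho(y,v)|\le 2R$. The three cases give this as follows.
\begin{itemize}
\item Under (a): $|r|\le R$ and $|\varrho|\le R$ give $|r-\varrho|\le 2R$ directly.
\item Under (b): where $r\ge\varrho$, the term $-(r-\varrho)D^2\varrho$ is $\ge 0$ by concavity. Where $r<\varrho$, we have $|r-\varrho|\le \varrho^+ +R\le 2R$.
\item Under (c): the argument is symmetric to (b).
\end{itemize}
Convexity of $\Omega$ makes the $(y,v)$-domain convex. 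Partial minimization of a jointly convex function over a convex set is convex, so $f_{x,u}$ is convex on $[-R,R]$.

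\textbf{Main obstacle.} Ingredient 1 is the step I expect to be hardest, and it has two delicate points.
\begin{itemize}
\item The Hessian bound only holds on the region where $|r-\varrho(y,v)|\le 2R$. So I must argue that pairs $(y,v)$ outside this region never realize the infimum for $|r|\le R$ (in case (a) the region is everything), or else localize the argument.
\item I must show that convexity of $f_{x,u}$ on $[-R,R]$ together with $f_{x,u}\ge 0$ implies $\conv_r f=f$ on $[-R,R]$. For this I would use a supporting affine minorant at each interior point, extended by a quadratic-type lower bound outside $[-R,R]$; the endpoints $\pm R$ need separate care.
\end{itemize}

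\textbf{Ingredient 2: the zero set of $f_{x,u}$.} I would show $f_{x,u}(r)=0$ iff $r=\varrho(x,u)$. A zero means that $(x,u,r)$ lies in the closure of the graph of $\varrho$ intersected with $Y$. By continuity of $\varrho$ on $\Omega\times[0,\infty)$, that closure is the graph itself, so $r=\varrho(x,u)$. The converse is immediate.

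\textbf{Conclusion.} Combining the two ingredients, $\conv_r f(x,u(x),r(x))=f(x,u(x),r(x))=0$ a.e. Hence $r(x)=\varrho(x,u(x))$ a.e., and (i) holds.
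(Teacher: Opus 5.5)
Your route is the paper's: convexity of $g$ on $[-R,R]\times\Omega\times[0,\infty)$ (Lemma~\ref{lem:marginal_convexity}), hence $\conv_r f=f$ for $|r|\le R$, then Theorem~\ref{thm:main_Istar_scm_I} and the $I$-version Theorem~\ref{thm:character_minI}. Your rank-one-plus-block splitting of $D^2g$ is a cleaner way to get semidefiniteness than the paper's minors. Your first ``delicate point'' is not an issue: your own case analysis already shows the $(y,v)$-block is $\ge 0$ at every $(y,v)$ once $|r|\le R$, in all three cases.

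\textbf{The gap.} The genuine gap is your second point. Convexity of $f_{x,u}$ on $[-R,R]$ together with $f_{x,u}\ge 0$ does not give $\conv_r f_{x,u}=f_{x,u}$ there; for example, $f\equiv 1$ on $[-R,R]$ with $f(\pm 2R)=0$ fails. The reason is that $\conv_r$ sees all $r\in\R$, while your Hessian bound says nothing for $|r|>R$, where $|r-\varrho|$ may exceed $2R$. What you would actually need is $f_{x,u}(s)\ge f_{x,u}(R)+p(s-R)$ for $s>R$, with $p$ the left slope at $R$, and symmetrically at $-R$. Neither a bound like $(|s|-R)_+^2$ nor monotonicity of $f_{x,u}$ on $[R,\infty)$ gives this. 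The paper's proof of Lemma~\ref{lem:marginal_convexity} also only treats $r\in[-R,R]$ and then asserts $I=I^\ast$, so you have found a step the paper passes over as well.

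\textbf{The repair.} The cleanest fix is to drop the claim. For (ii)$\Rightarrow$(i) you only need that $\conv_r f_{x,u}(r)=0$ forces $r=r_0\coloneqq\varrho(x,u)$.
By your Ingredient~2, $f_{x,u}$ is continuous and vanishes only at $r_0$. It also grows at least linearly. Put $d=|(y,v)-(x,u)|$. Taylor along segments in the convex set $\Omega\times[0,\infty)$ gives $|\varrho(y,v)|\le|\varrho(x,u)|+|\nabla\varrho(x,u)|\,d+\frac{d^2}{4R}$. Splitting into the cases $d^2\ge R|r|$ and $d^2<R|r|$ then yields $f_{x,u}(r)\ge R|r|$ for $|r|$ large. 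In case (a) it is simpler still: $f_{x,u}(r)\ge(|r|-R)_+^2$.
Now take $r>r_0$ and $s_1\in(r_0,r)$. For small $\alpha>0$, the convex function $\alpha(s-s_1)_+$ lies below $f_{x,u}$, so $\conv_r f_{x,u}(r)\ge\alpha(r-s_1)>0$. The case $r<r_0$ is symmetric.
With this, Ingredient~1 is not needed for the theorem at all.

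If you do want $\conv_r f=f$ on $[-R,R]$, here is the right form of your supporting-line idea. Fix $\bar r\in[-R,R]$, suppose $(y_0,v_0)$ minimizes $g(\bar r,\cdot,\cdot)$, and set $p=2(\bar r-\varrho(y_0,v_0))$. The line through $(\bar r,f_{x,u}(\bar r))$ with slope $p$ lies below $f_{x,u}$ on all of $\R$ if and only if $(y_0,v_0)$ minimizes $\Phi_p(y,v)=|x-y|^2+|u-v|^2-p\,\varrho(y,v)$. Under (a)--(c) the Hessian $2\Id-pD^2\varrho$ is $\ge 0$, because either $|p|\le 4R$ or $p$ has the favorable sign. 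Moreover, $(y_0,v_0)$ satisfies the same first-order condition for $\Phi_p$ as for $g(\bar r,\cdot,\cdot)$, so it is a global minimizer of $\Phi_p$.
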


The proof of Theorem~\ref{thm:character_minIstar} is done as follows: first we show in Lemma~\ref{lem:marginal_convexity} that $\dist((x,u,r),\D)=\conv_r \dist((x,u,r),\D)$ under the assumptions on $\varrho$ of Theorem~\ref{thm:consistency}. This implies that $I=I^\ast$ on $\{(u,r)\in \cC: \|r\|_\infty\leq R\}$. The proof of Theorem~\ref{thm:character_minIstar} is then concluded by applying the following weaker form of Theorem~\ref{thm:character_minIstar} together with the result $I^\ast=\scm I$ on $\cC$ from Theorem~\ref{thm:main_Istar_scm_I}. 

\begin{theorem} \label{thm:character_minI}
Suppose that the data set is given by \eqref{eq:def_D}. Then the following three statements are equivalent:
\begin{enumerate}
\item[(i)] $u$ solves \eqref{eq:main2} and
$\sup_{x\in \Omega} |\varrho(x)|\leq R$ where $\varrho(x)\coloneqq \varrho(x,u(x))$
\item[(ii)] $I(u,\varrho)+\iota_R(\varrho)=0$
\item[(iii)] $(u,\varrho)=\argmin\,(I+\iota_R)$
\end{enumerate}
\end{theorem}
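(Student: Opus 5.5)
The plan is to prove (i)$\Rightarrow$(ii)$\Rightarrow$(iii)$\Rightarrow$(i). Two facts drive everything: $I\ge 0$ and $\iota_R\ge 0$, so every value of $I+\iota_R$ is nonnegative. Also, the data set \eqref{eq:def_D} contains every point $(x,u,\varrho(x,u))$ with $x\in\Omega$, $u\ge 0$.

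For (i)$\Rightarrow$(ii), let $u$ be a weak solution of \eqref{eq:main2} with $\varrho(x)=\varrho(x,u(x))$ and $\|\varrho\|_\infty\le R$. Then $\varrho\in L^\infty(\Omega)$; it is measurable because $\varrho$ is continuous and $u$ is measurable. Since $u$ solves \eqref{eq:main} with $r=\varrho$, we have $(u,\varrho)\in\cC$, and $\iota_R(\varrho)=0$. There is one point to check here: membership in $\cC$ requires $u\ge 0$. I will read (i) as including this, since $\varrho(x,u)$ is only defined for $u\ge 0$, so $u(x)\in[0,\infty)$ a.e. is implicit in the statement. For a.e.\ $x$ the point $(x,u(x),\varrho(x,u(x)))$ lies in $\D$, so the integrand $\dist((x,u(x),\varrho(x)),\D)$ vanishes a.e. Hence $I(u,\varrho)=0$.

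For (ii)$\Rightarrow$(iii), nonnegativity gives $I+\iota_R\ge 0$ everywhere, so a point where the value is $0$ is a minimizer. I interpret ``$=\argmin$'' as ``belongs to the set of minimizers''. Uniqueness is not claimed and is false in general, since \eqref{eq:main2} may have several solutions.

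For (iii)$\Rightarrow$(i), the key step is to show that the minimum value is $0$. The constant pair $(0,\varrho(\cdot,0))$ does this. The function $u\equiv 0$ solves \eqref{eq:main} with any $r$, so $(0,\varrho(\cdot,0))\in\cC$. It also has zero cost, because $(x,0,\varrho(x,0))\in\D$. Under the assumption $\sup_x|\varrho(x,0)|\le R$ of Theorem~\ref{thm:consistency} we get $\iota_R=0$ as well. Thus $\min(I+\iota_R)=0$, and any minimizer $(u,r)$ satisfies $I(u,r)+\iota_R(r)=0$. (Here the statement writes the minimizer as $(u,\varrho)$ with $\varrho$ a priori an arbitrary $L^\infty$ function $r$.) From this, $(u,r)\in\cC$, $\|r\|_\infty\le R$, and $\dist((x,u(x),r(x)),\D)=0$ for a.e.\ $x$.

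The main obstacle is turning zero pseudo-distance into $r(x)=\varrho(x,u(x))$, because $\D$ need not be closed in $Y$. Fix $x$ with $u(x)<\infty$. Zero distance gives data points $(x_k,u_k,\varrho(x_k,u_k))$ with $x_k\to x$, $u_k\to u(x)$ and $\varrho(x_k,u_k)\to r(x)$. Since $x\in\Omega$ and $\varrho$ is continuous on $\Omega\times[0,\infty)$, we get $r(x)=\varrho(x,u(x))$. Hence $r=\varrho(\cdot,u(\cdot))$ a.e., and $u$ solves \eqref{eq:main2} weakly with $\sup_x|\varrho(x,u(x))|\le R$, which is (i).
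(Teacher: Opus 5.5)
Your proposal is correct and follows the paper's proof: the paper also proves (i)$\Leftrightarrow$(ii) directly, and it obtains (iii)$\Rightarrow$(ii) by comparing with the zero-cost competitor $(0,\varrho(\cdot,0))\in\cC$, using the bound $\sup_{x}|\varrho(x,0)|\le R$ from Theorem~\ref{thm:consistency}. Your continuity argument for passing from $\dist((x,u(x),r(x)),\D)=0$ to $r(x)=\varrho(x,u(x))$ makes explicit a step the paper only asserts; note that this argument actually shows $\D$ \emph{is} closed in $Y$ when $\varrho$ is continuous (it fails to be closed only as a subset of $\R^{N+2}$), so your remark that $\D$ ``need not be closed in $Y$'' is slightly off, though harmless.
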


\begin{remark}
Notice that in contrast to Theorem~\ref{thm:character_minIstar} no further assumption on the nature of the known reproduction laws $\varrho$ is needed for Theorem~\ref{thm:character_minI}. However, our main result of Theorem~\ref{thm:consistency} does not follow from Theorem~\ref{thm:character_minI} because optimal data-driven solutions are characterized as minimizers of $\scm I+\iota_R$ and not as minimizers of $I+\iota_R$, and even though $\inf_V (I+\iota_R)=\inf_V (\scm I+\iota_R)$, a minimizer of $\scm I+\iota_R$ is not necessarily a minimizer of $I+\iota_R$. 
\end{remark}

\begin{proof}[Proof of Theorem~\ref{thm:character_minI}] We consider the implications separately. 

\noindent
 (i) $\to$ (ii): If $u$ solves \eqref{eq:main2} and if we set $\varrho(x)\coloneqq \varrho(x, u(x))$ then $(x,u(x),\varrho(x))\in\D$ for all $x\in\Omega$ so that $\dist( (x,u(x),\varrho(x)), \D)=0$ for all $x\in \Omega$ and $\iota_R(\varrho)=0$ which implies (ii).

\smallskip

\noindent
(ii) $\to$ (i): Clearly, if $I(u,\varrho)+\iota_R(\varrho)=0$ then both $I(u,\varrho)$ and $\iota_R(\varrho)$ vanish. In particular, $\|\varrho\|_\infty\leq R$ and $\dist((x,u(x),\varrho(x)), \D)=0$ for all $x\in\Omega$, that is $(x, u(x), \varrho(x))\in\D$ for all $x\in\Omega$. The definition of the data set $\D$ implies $\varrho(x)=\varrho(x,u(x))$. Moreover $0=I(u,\varrho)<\infty$ implies that  $(u,\varrho)=(u,\varrho(\cdot, u(\cdot))) \in\mathcal{C}$ so that $u$ solves \eqref{eq:main2}.

\smallskip

\noindent
(ii) $\to$ (iii): Since $I+\iota_R\ge 0$ and $I(u,\varrho)+\iota_R(\varrho)=0$ we obtain (iii).

\smallskip

\noindent
(iii) $\to$ (ii): Suppose $(u, \varrho)$ is a minimizer of $I+\iota_R$ over $V$. Then $\iota_R(\varrho)=0$. Let $\varrho_0(x) \coloneqq \varrho(x,0)$. Then $(0,\varrho_0)\in\cC$, $\dist((x,0, \varrho_0(x)),\D)=0$, and, by assumption, $\|\varrho_0\|_\infty\leq R$. Therefore we find the inequality $0\leq I(u,\varrho)+\iota_R(\varrho)\leq I(0,\varrho_0)+\iota_R(\varrho_0)=0$ which implies (ii). 
\end{proof}

\begin{lemma} \label{lem:marginal_convexity} Suppose $\Omega\subset \R^N$ is a bounded convex domain and that the $C^2$-function $\varrho: \Omega\times [0,\infty)$ satisfies the assumptions from Theorem~\ref{thm:consistency}. Then the map $r \mapsto \dist((\cdot,\cdot,r),\D)$ is convex and therefore $I=I^\ast$ on $\{(u,r)\in \cC: \|r\|_\infty \leq R\}$.
\end{lemma}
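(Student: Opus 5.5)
The plan is to write the pseudo-distance as a partial infimum of a jointly convex function. For fixed $(x,u)$, put $z=(y,v)\in K\coloneqq\Omega\times[0,\infty)$ and $w=(x,u)$. Then
\[
f_{x,u}(r)=\inf_{z\in K} h(z,r),\qquad h(z,r)\coloneqq |w-z|^2+\bigl(r-\varrho(z)\bigr)^2 .
\]
Since $\Omega$ is convex, $K$ is a convex set. The partial infimum over a convex set of a function that is jointly convex in $(z,r)$ is convex in $r$. So the main work is to prove joint convexity of $h$ on $K\times[-R,R]$, and then to pass from convexity on $[-R,R]$ to the identity $\conv_r f=f$ there.

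For joint convexity I compute the Hessian of $h$ (which is $C^2$ because $\varrho$ is). Its blocks are
\[
\partial_z^2 h=2I+2\nabla\varrho\nabla\varrho^T-2(r-\varrho)D^2\varrho,\qquad \partial_z\partial_r h=-2\nabla\varrho,\qquad \partial_r^2h=2 .
\]
Since $\partial_r^2h>0$, positive semidefiniteness is equivalent to the Schur complement being positive semidefinite. The Schur complement is
\[
2I+2\nabla\varrho\nabla\varrho^T-2(r-\varrho)D^2\varrho-\tfrac{1}{2}(2\nabla\varrho)(2\nabla\varrho)^T=2\bigl(I-(r-\varrho)D^2\varrho\bigr).
\]
The rank-one terms cancel, which is the key simplification. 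It therefore suffices to have $(r-\varrho)D^2\varrho\le I$, and this holds whenever $|r-\varrho|\,\|D^2\varrho\|\le 1$, since a sub-multiplicative norm dominates the spectral radius. I treat the three cases as follows.
\begin{itemize}
\item[(a)] For $|r|\le R$ and $|\varrho|\le R$ we get $|r-\varrho|\le 2R$, and $\|D^2\varrho\|\le\frac1{2R}$ gives the bound.
\item[(b)] If $\varrho$ is concave, then $-(r-\varrho)D^2\varrho\ge0$ whenever $r\ge\varrho$. When $r<\varrho$ we have $0<\varrho-r\le R+R$, using $\varrho^+\le R$ and $r\ge-R$.
\item[(c)] This case is symmetric to (b), using $\varrho^-\le R$ and $r\le R$.
\end{itemize}
Hence $h$ is convex on $K\times[-R,R]$, which is convex, and $f_{x,u}$ is convex on $[-R,R]$. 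The assumption $|\varrho(x,0)|\le R$ is not needed here; it enters only through Theorem~\ref{thm:character_minI}.

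To get $\conv_r f(x,u,r_0)=f(x,u,r_0)$ for $|r_0|\le R$, I need an affine minorant of $f_{x,u}$ on all of $\R$ touching at $r_0$. Convexity on $[-R,R]$ supplies a supporting line $\ell$ with slope $s\in\partial f_{x,u}(r_0)$ relative to $[-R,R]$, and it remains to show $\ell\le f_{x,u}$ for $|r|>R$. I expect this step to be the main obstacle. My first attempt is the following. Write $f_{x,u}(r)=\inf_z\bigl[c(z)+(r-\varrho(z))^2\bigr]$, an infimum of parabolas. For $r>R$, every parabola with vertex $\varrho(z)\le R$ is increasing on $[R,\infty)$ with slope at $R$ at least its slope at any point of $[-R,R]$, which I will compare with $s$. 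Parabolas with vertex above $R$ (possible in case (c)) need the convexity case hypothesis again, since there the $r$-convexity region extends in the relevant direction. If this extension fails in general, I would fall back to proving directly that $r\mapsto h(z,r)$ restricted to near-optimal $z$ satisfies the Schur bound on a larger $r$-interval.

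Finally, with $\conv_r f=f$ at every $(x,u(x),r(x))$ with $\|r\|_\infty\le R$, integrating over $\Omega$ gives $I^\ast=I$ on $\{(u,r)\in\cC:\|r\|_\infty\le R\}$ by Definition~\ref{def:conv}.
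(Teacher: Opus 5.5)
\textbf{Verdict: there is a genuine gap.} Your proof of convexity on $[-R,R]$ is correct, but the step from there to $\conv_r f=f$ on $[-R,R]$ is left open, and neither of your sketched routes closes it.

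\textbf{The part that works.} Your convexity argument follows the paper's route: joint convexity of $\tau$ (your $h$), then marginal convexity over the convex set $\Omega\times[0,\infty)$. Your Schur complement reaches the paper's condition $\Id+(\varrho-r)D^2\varrho\ge 0$ more cleanly than its minor/determinant computation. You are also right that this only gives convexity of $f_{x,u}$ on $[-R,R]$, whereas $\conv_r$ in Definition~\ref{def:conv} uses minorants convex on all of $\R$. The paper itself passes from the former to $I=I^\ast$ with a bare ``therefore''.

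\textbf{Why your sketched routes fail.} Write $p_z\coloneqq h(z,\cdot)$. If $f_{x,u}$ is differentiable at $r_0$, then $s=2(r_0-\varrho(z_0))$ for a minimizer $z_0$. But $p_z'(R)=2(R-\varrho(z))<s$ for every $z$ with $\varrho(z)-\varrho(z_0)>R-r_0$. For such $z$, knowing $p_z\ge \ell$ on $[-R,R]$ does not stop $p_z$ from dipping below $\ell$ on $(R,\infty)$, unless you control the margin $p_z(R)-\ell(R)$ quantitatively. The fallback fails too. The hypotheses give no control of $|r-\varrho|\,\|D^2\varrho\|$ for $|r|>R$, and marginal convexity needs joint convexity on all of $K$ times an $r$-interval, not only at near-optimal $z$. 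Moreover, convexity on any larger bounded interval would leave the same problem at its endpoints.

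\textbf{The missing idea.} Build the global supporting line directly from first-order optimality and the curvature bound. Extend $\varrho$ to $\overline K$; this is possible because $D^2\varrho$ is bounded and $K$ is convex, so $\nabla\varrho$ is Lipschitz. Fix $r_0\in[-R,R]$ and let $z_0$ minimize $h(\cdot,r_0)$ over $\overline K$. Set $a\coloneqq r_0-\varrho(z_0)$ and $\ell(r)\coloneqq f_{x,u}(r_0)+2a(r-r_0)$. For $z\in K$ write $d\coloneqq z-z_0$, $\delta\coloneqq\varrho(z)-\varrho(z_0)$ and $E\coloneqq\delta-\nabla\varrho(z_0)\cdot d$. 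Expanding gives, for every $r\in\R$,
\[
h(z,r)-\ell(r)=|d|^2+\nabla_z h(z_0,r_0)\cdot d-2aE+(r-r_0-\delta)^2 .
\]
Each term is controlled as follows.
\begin{itemize}
\item $\nabla_z h(z_0,r_0)\cdot d\ge 0$, by optimality of $z_0$ on a convex set.
\item Taylor's formula gives $|E|\le |d|^2/(4R)$, with $E\le 0$ in case (b) and $E\ge 0$ in case (c).
\item Hence $-2aE\ge -|d|^2$. In case (a) this holds because $|a|\le 2R$. In case (b), $-2aE\ge 0$ if $a\ge 0$, and otherwise $0<-a\le \varrho^+(z_0)+R\le 2R$. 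Case (c) is symmetric.
\end{itemize}
So $h(z,\cdot)\ge \ell$ on all of $\R$ for every $z\in K$. Therefore $f_{x,u}\ge \ell$ on $\R$ with equality at $r_0$, i.e.\ $\conv_r f=f$ on $[-R,R]$. This also yields the convexity on $[-R,R]$ without any Hessian computation. The variable $r$ enters only through the square $(r-r_0-\delta)^2$, which is why nothing beyond $|r_0|\le R$ is needed.
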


\begin{proof}
Fix $(x,u)\in \Omega\times [0,\infty)$ and consider the map $\tau: \Omega\times [-R,R] \times[0,\infty)$ given by $\tau(\tilde x,r,\tilde u)\coloneqq |x-\tilde x|^2+|u-\tilde u|^2 + |r-\varrho(\tilde x,\tilde u)|^2$. Since the data set $\D$ is given by \eqref{eq:def_D} we see that 
$$
\dist((x,u,r),\D) = \inf_{\tilde x\in\Omega, \tilde u\ge 0} \tau(\tilde x,r,\tilde u).
$$
Note that we have frozen the variables $x$ and $u$ and consider only the $r$-dependence. We need to check whether $r \mapsto \dist((\cdot,\cdot,r),\D)$ is convex. Since $(\tilde x,\tilde u)$ vary in a convex domain $\Omega\times [0,\infty)$, this is a case of marginal convexity for which Theorem~5.7 in \cite{Rockafellar} and the subsequent consideration applies. It states that a sufficient condition for the convexity of $r \mapsto \dist((\cdot,\cdot,r),\D)$ is the convexity of $(\tilde x, \tilde u,r)\mapsto \tau(\tilde x, \tilde u,r)$. For this it is sufficient to show that the Hessian matrix $D^2\tau $ of $\tau$ is positive semi-definite, which is given by 
\begin{equation}\label{H:ndim} D^2\tau = 2 
\begin{pmatrix}
\Id + \partial_{\tilde x}\varrho \partial_{\tilde x}\varrho^{\mathrm{T}}+ (\varrho-r)D^2_{\tilde x}\varrho & - \partial_{\tilde x}\varrho &  \partial_{\tilde u}\varrho\partial_{\tilde x}\varrho + (\varrho-r)\partial^2_{\tilde x,\tilde u}\varrho \\
- \partial^{\mathrm{T}}_{\tilde x}\varrho & 1 & -  \partial_{\tilde u}\varrho \\
\partial_{\tilde u}\varrho\partial_{\tilde x}\varrho^{\mathrm{T}} + (\varrho-r)\partial^2_{\tilde x,\tilde u}\varrho^{\mathrm{T}} & - \partial_{\tilde u}\varrho & (\partial_{\tilde u}\varrho)^2 + (\varrho-r) \partial^2_{\tilde u}\varrho +1
\end{pmatrix},\end{equation}
 where $D^2_{\tilde x}\varrho$ is the Hessian matrix of $\varrho$ w.r.t. the variable $\tilde x$.
In order to prove positive semi-definiteness of $D^2\tau$ we need to check that all principal minors are non-negative. Starting from the north-west corner of the matrix, we see that it is enough to have 
 \begin{equation}\label{cond:convex}
\Id + \partial_{\tilde x} \varrho \partial_{\tilde x}\varrho^T+(\varrho-r)D^2_{\tilde x} \varrho\geq 0,  \quad \det(\Id + (\varrho-r)D^2_{\tilde x}\varrho)\ge 0, \quad \det D^2 \tau\ge 0.
\end{equation}
By the Gauss reduction technique we get that
\begin{align*} \det D^2\tau & = 2^{N+2}\det 
\begin{pmatrix}
\Id+(\varrho-r)D^2_{\tilde x}\varrho & 0 & (\varrho-r)\partial^2_{\tilde x,\tilde u}\varrho \\
0 & 1 & 0 \\
(\varrho-r)\partial^2_{\tilde x,\tilde u}\varrho^{\mathrm{T}} & 0 & (\varrho-r) \partial^2_{\tilde u}\varrho +1
\end{pmatrix} \\
& = 2^{N+2} \det \begin{pmatrix}
\Id+(\varrho-r)D^2_{\tilde x}\varrho & (\varrho-r)\partial^2_{\tilde x,\tilde u}\varrho \\
(\varrho-r)\partial^2_{\tilde x,\tilde u}\varrho^{\mathrm{T}} & (\varrho-r) \partial^2_{\tilde u}\varrho +1
\end{pmatrix} \\
& = 2^{N+2} \det (\Id + (\varrho-r) D^2 \varrho).
\end{align*}
 In view of \eqref{cond:convex} the map $\tau$ is convex if the matrix $\Id+(\varrho-r) D^2\varrho$ is positive semi-definite. By the Rayleigh quotient characterization of the lowest eigenvalue, one finds for real symmetric matrices $A, B$ Weyl's bound $\lambda_1(A+B)\geq \lambda_1(A)+\lambda_1(B)$. Therefore, for the positive definiteness of $\Id + (\varrho-r)D^2\varrho$ it is sufficient to have 
\begin{equation} \label{eq:ineq_lambda1}
1 \geq -\lambda_1 \Bigl((\varrho-r) D^2\varrho\Bigr)= \lambda_{\max} \Bigl((r-\varrho) D^2\varrho\Bigr). 
\end{equation}
Under assumption (a) of the theorem we have $|r-\varrho|\leq 2R$ and a sufficient condition for \eqref{eq:ineq_lambda1} is that the spectral radius $\sigma_\text{rad}(D^2\varrho)$ is smaller or equal than $\nicefrac{1}{2R}$. Since any sub-multiplicative matrix norm is an upper bound for the spectral radius, the claim follows.

\medskip
Under assumption (b) we have that $D^2\varrho\leq 0$ is negative semi-definite. Therefore, since $\|\varrho^+\|_\infty \leq R$ and $r\in[-R,R]$ we have that 
$$
\lambda_{\max}((r-\varrho) D^2\varrho) \leq \lambda_{\max}((r-\varrho^+) D^2\varrho)\leq 2R \sigma_\text{rad}(D^2\varrho)
$$
while under assumption (c) with $D^2\varrho\geq 0$, $\|\varrho^-\|_\infty\leq R$ we have 
$$
\lambda_{\max}((r-\varrho) D^2\varrho) \leq \lambda_{\max}((r+\varrho^-) D^2\varrho)\leq 2R \sigma_\text{rad}(D^2\varrho).
$$
In both cases the claim follows as in case (a).
\end{proof}

\begin{remark}\label{rem:models}
As an example for \eqref{eq:main2} let us consider the diffusive logistic equation
\begin{equation} \label{eq:mainMod} \left\{
 \begin{alignedat}{2}
-\Delta u &= a(b-u) u, \quad &&\text{in}~ \Omega,\\
\partial_\nu u &=0, \quad &&\text{on}~ \partial \Omega,\\
u&>0, \quad &&\text{in}~ \Omega,
\end{alignedat}{} \right.
\end{equation}
with $a, b>0$, cf. \cite{O92}. By \cite[Theorem 3]{O92}, there exists for any $a, b>0$ a unique solution $u$ to \eqref{eq:mainMod} which (by the simple nonlinearity) is given by $u\equiv b$ on $\Omega$. Thus, the model \eqref{eq:mainMod} is of limited interest. However, since $\varrho(u)=a(b-u)$ is concave for $u\in [0,\infty)$ the assumption of Theorem~\ref{thm:consistency}(b) is fulfilled provided $R\geq ab=\|\varrho^+\|_\infty$. 

\medskip

As a more general model, we consider the diffusive logistic equation
with spatial heterogeneity of the type  
\begin{equation}\label{xs:prob}
   \left\{
 \begin{alignedat}{2}
        -\Delta u &= a(x)(b(x)-u)u,\quad  && \text { in } \Omega, \\
        \partial_\nu u&=0 \quad && \text { on } \partial \Omega,
 \end{alignedat}{} \right.
\end{equation}
with positive weights $a,b \in C^{2}(\Omega)$. Existence and qualitative properties of solutions to \eqref{xs:prob}, under suitable assumptions on the weights $a,b$, were studied in \cite{Du2006,   Henry1981, CC2003, XS23}.
Now we will state conditions on $a,b$ such that Theorem \ref{thm:consistency} is satisfied.
Let
$$\varrho(x,u)\coloneqq a(x)b(x)-a(x) u\in C^2(\Omega\times [0,\infty))$$
and 
$$
D^2\varrho=
    \begin{pmatrix}
        &  D^2(a(x)b(x))-D^2a(x) u & -  \nabla a(x)
            \\
        & - \nabla a(x)^T & 0 
    \end{pmatrix}.
$$
Since $u$ is not bounded from above, it is difficult to prove (a) and (c) of Theorem \ref{thm:consistency}. So we will focus on (b). On the one hand, by the structure of $D^2\varrho$, in order to make it negative semi-definite, we need to assume $a(x)\equiv a\in \R^+$ and that $b$ is a concave function.
On the other hand, 
$$
\varrho^+(x,u)=a\max\{0,b(x)- u\}
$$
so that we need to assume $\|\varrho^+\|_\infty=a\|b\|_\infty\le R$. Moreover, since $\Omega$ is a bounded domain and $b \in C^{2}(\Omega)$, we see that $\|D^2\varrho\|\le \frac{1}{2R}$ if and only if $\|D^2b\|\le \frac{1}{2R}$.
Thus in summary, Theorem~\ref{thm:consistency}(b) holds provided $b$ is concave, $a\|b\|_{\infty}\le R$ and $\|D^2b\|\le \frac{1}{2R}$.
\end{remark}

\section{Convergence}\label{sec_conv}
{
The convergence of data sets and their associated functionals as well as their minimizers (i.e. optimal data-driven solutions) is intimately connected with the notion of $\Gamma$-convergence. We recall the definition and some fundamental properties.

\begin{definition}[$\Gamma$-convergence on metric spaces]\label{def:Gamma}
Let $(X,d)$ be a metric space. A sequence of functionals $F_n\colon X\to [-\infty, \infty]$ $\Gamma$-converges to $F\colon X\to [-\infty, \infty]$ if:
\begin{itemize}
 \item For all $x\in X$ and for all sequences $x_n\to x$ we have
 $$
 F(x)\le \liminf_{n\to\infty} F_n(x_n).
 $$
 \item For all $x\in X$ there exists a sequence $x_n\to x$ such that
 $$
 F(x) \ge \limsup_{n\to\infty} F_n(x_n).
 $$
\end{itemize} 
\end{definition}
One of the most important features of $\Gamma$-convergence is the preservation of minimizers: if for each $n\in \N$ the point $x_n$ is a minimizer of $F_n$ and $(F_n)$ $\Gamma-$converges to $F$ then any accumulation point $x$ of $(x_n)$ is a minimizer of $F$, see cf. \cite{DalMaso}.

\medskip

\noindent
{\bf Topological vs. sequential $\Gamma$-convergence.} The notion of $\Gamma$-convergence on topological spaces is less simple than on metric spaces (we do not give the definition here; for details cf. \cite{DalMaso}). In our setting we would like to discuss $\Gamma$-limits for functionals on $V=H^1(\Omega)\times L^\infty(\Omega)$ equipped with the weak-weak$^\ast$ topology. While on the entire infinite dimensional Banach space $V$ the weak-weak$^\ast$ topology is not metrizable, it is metrizable (\cite[Theorem 3.29]{brezis}) on bounded subsets of $V$ since $H^1(\Omega)$ is a reflexive Sobolev space and $L^\infty(\Omega)$ is the dual of the separable space $L^1(\Omega)$. Therefore, if the functionals $(F_n), F$ are equi-coercive, i.e., 
 \begin{equation} \label{eq:babycoercive}
 \alpha(| x |) \le F_n(x), F(x)
 \end{equation}
 with a function $\alpha \colon [0,\infty) \to \R$ such that $\alpha(t) \to \infty$ as $t \to \infty$, then we can use Definition~\ref{def:Gamma} with $x_n\to x$ replaced by $u_n\rightharpoonup u$ in $H^1(\Omega)$ and $r_n \stackrel{\ast}{\rightharpoonup} r$ in $L^\infty(\Omega)$, see \cite[Lemma 2.4]{CMO}.

 \medskip

\noindent
{\bf Existence of $\Gamma$-limit for constant sequences.} It is not obvious that constant sequences admit a $\Gamma$-limit. In metric spaces, however, the constant sequence $F_n =F$ possesses a $\Gamma$-limit given by $\scm F$. In the situation of $V=H^1(\Omega)\times L^\infty(\Omega)$ equipped with the weak-weak$^\ast$ topology, the $\Gamma$-limit of the constant sequence $F$ exists and equals $\scm F$ provided $F$ is coercive.
}
\medskip

Next we give the definition of convergence of data sets. We use the following definition of the (asymmetric) directed Hausdorff pseudo-distance of two data sets $\mathcal A, \mathcal B\subseteq Y$ by 
$$
\dist(\mathcal A, \mathcal B) \coloneqq \sup_{z \in \mathcal A} \inf_{\tilde z \in \mathcal B} |z-\tilde z|^2 
$$
where we abbreviate $z=(x,u,r), \tilde z=(\tilde x, \tilde u, \tilde r)\in Y$.

\begin{definition}\label{def_conv_data_set}
Given a sequence $(\D_n)\subseteq Y$ of sets and $\D\subseteq Y$ we say that $\D_n\to \D$ as $n\to\infty$ if for all $S>0$
\begin{equation} \label{eq:def_convergence}
\lim_{n\to\infty}\dist(\D\cap B_{S}(0), \D_n) = 0 =\lim_{n\to\infty}\dist(\D_n\cap B_{S}(0), \D).
\end{equation}
\end{definition}

Let us consider $(\D_n)\subseteq Y$ and we define the cost functionals $I_n\colon V\to [0,\infty]$ as
\begin{align}\label{eq:I_n}
 I_n(u,r) \coloneqq 
 \begin{cases} 
 \int_{\Omega} \dist( (x, u(x),r(x)), \D_n) \dd x, & (u,r) \in \mathcal{C}, \\ 
 \infty, & \text{otherwise}.
 \end{cases}
\end{align}

The main result about convergence is given in the following theorem. 

\begin{theorem} \label{thm:gammaconv}
Let $(\D_n),\D \subset Y$ be closed, nonempty data sets and $a\in \R, b>0$ such that $\D, \D_n\subseteq \Omega\times \Sigma_{a,b}$ for all $n\in\N$ with $\Sigma_{a,b}$ as in \eqref{eq:condD_new}. If $\D_n \to \D$ as $n\to \infty$ in the sense of Definition~\ref{def_conv_data_set} then the following holds:  
\begin{itemize}
    \item[(i)] The functionals $I_n$ $\Gamma$-converge to $\scm I$ as $n\to \infty$ and the same holds for $\scm I_n$. 
    \item[(ii)] If $R>0$ is sufficiently large, then optimal data-driven solutions of \eqref{eq:main} for $\D_n$ converge to optimal data-driven solutions of \eqref{eq:main} for $\D$.
\end{itemize} 
\end{theorem}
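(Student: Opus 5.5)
We will follow the strategy of \cite{CMO,LSS23}, using equi-coercivity to work with sequences. By Lemma~\ref{lemma:dist}(b), the uniform inclusion $\D_n,\D\subseteq\Omega\times\Sigma_{a,b}$ yields the bound \eqref{eq:2_coercive_new} for $\dist(\cdot,\D_n)$ with constants $c_1,c_2,\gamma$ independent of $n$. As in the proof of Lemma~\ref{l:coercivity_omega}(b), this gives, uniformly in $n$, the lower bound $I_n(u,r)\ge c_1(\|u\|_2^2+\|r\|_2^2)-c_2|\Omega|$ on $\cC$. The equation then controls $\|\nabla u\|_2$ once $\|r\|_\infty$ is bounded. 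Bounded $\|r\|_\infty$ along relevant sequences is automatic for weak$^\ast$ convergent sequences, by the uniform boundedness principle. Hence we may use the sequential form of Definition~\ref{def:Gamma}, via \cite[Lemma 2.4]{CMO}.

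\textbf{Step 1 (pointwise convergence of integrands).} We show that $f_n:=\dist(\cdot,\D_n)\to f:=\dist(\cdot,\D)$ locally uniformly on $Y$. Fix $z$ with $|z|\le S$. A near-minimizer of $\dist(z,\D)$ lies in a ball $B_{S'}(0)$, where $S'$ depends only on $S$ and $\dist(z_0,\D)$ for a fixed point. The first part of \eqref{eq:def_convergence} supplies nearby points of $\D_n$, hence $\limsup f_n(z)\le f(z)$ uniformly on $B_S$. Here we use $\sqrt{\cdot}$ and the triangle inequality for the true distance, then square back. For the reverse inequality we also need the minimizers in $\D_n$ to stay in a bounded ball. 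This holds because $f_n(z)$ is bounded by the previous step, so the symmetric argument with the second part of \eqref{eq:def_convergence} applies. We also have the uniform quadratic upper bound $f_n(z)\le C(1+|z|^2)$.

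\textbf{Step 2 (liminf).} Let $(u_n,r_n)\rightharpoonup(u,r)$ weak-weak$^\ast$ with $\liminf I_n<\infty$, so that $(u_n,r_n)\in\cC$. By compactness $u_n\to u$ in $L^2$ and a.e. We write $f_n(x,u_n,r_n)\ge f(x,u_n,r_n)-\varepsilon_n(x)$, where $\varepsilon_n\to0$ in $L^1$ by Step 1, the uniform $L^\infty$ bound on $r_n$, and equi-integrability of $u_n^2$. Then $\liminf I_n(u_n,r_n)\ge\liminf I(u_n,r_n)\ge \scm I(u,r)$ by definition \eqref{def:sc-}. Since $\scm I_n\le I_n$ and $\scm I_n$ is the lsc envelope, the same bound holds for $\scm I_n$ after a diagonal argument in the metrizable bounded setting.

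\textbf{Step 3 (limsup).} For $(u,r)$ with $\scm I(u,r)<\infty$, choose $(u_k,r_k)\in\cC$ converging weak-weak$^\ast$ to $(u,r)$ with $I(u_k,r_k)\to\scm I(u,r)$. For fixed $k$, $I_n(u_k,r_k)\to I(u_k,r_k)$ by Step 1 and dominated convergence, using the quadratic bound and $u_k\in L^2$, $r_k\in L^\infty$. A diagonal choice $k(n)$ in the local metric $d$ yields the recovery sequence. For $\scm I_n$ the limsup follows from $\scm I_n\le I_n$. Part (ii) then follows from $\Gamma$-convergence of $\scm I_n+\iota_R$: $\iota_R$ is weak$^\ast$ lsc and, being fixed, it is preserved by recovery sequences whose $r$-components satisfy $\|r_k\|_\infty\le R$. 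Equi-coercivity then gives convergence of minimizers, which exist by Theorem~\ref{thm:ex_odds} since $\Sigma_{a,b}\subseteq\mathcal S_{a,b}$.

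\textbf{Main obstacle.} The main obstacle is exactly this compatibility with $\iota_R$ in (ii). The approximating pairs $(u_k,r_k)$ for $\scm I$ need not satisfy $\|r_k\|_\infty\le R$, which is why ``$R$ sufficiently large'' is needed. My plan is to use Theorem~\ref{thm:main_Istar_scm_I}, $\scm I=I^\ast$, together with the construction from its proof. The oscillating $r_k$ there take values in the supports of convex combinations realizing $\conv_r f$. Bounding these values uniformly requires $R$ to exceed a bound determined by $\Sigma_{a,b}$ and the relevant minimizers.
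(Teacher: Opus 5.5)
Your part (i) is correct and is essentially the paper's argument. Step 1 is Lemma~\ref{prop:fn:f}(iii), and your liminf step is Lemma~\ref{lemma_c} together with Lemma~\ref{prop:fn:f}(iv). Your limsup (pointwise convergence for fixed $k$ plus a diagonal $k(n)$) is a harmless variant of the paper's observation that uniform convergence on equi-integrable sets turns any recovery sequence for $I$ into one for $I_n$.

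\textbf{The gap is in part (ii).} The liminf half of $\Gamma$-convergence of $\scm I_n+\iota_R$ is indeed automatic, but the limsup half is never established, and the route you sketch cannot establish it. You obtain recovery sequences for $\scm I_n$ through $I_n$ (via $\scm I_n\le I_n$), and these must oscillate in $r$. At the boundary of the constraint this is impossible. Suppose $(u,r)\in\cC$, $\|r\|_\infty\le R$, $r=R$ on a set $E$ with $|E|>0$, and $\conv_r f(x,u(x),R)<f(x,u(x),R)$ on $E$. Every sequence with $\|r_k\|_\infty\le R$ and $r_k\stackrel{*}{\rightharpoonup}r$ has $R-r_k\ge 0$ and $\int_E(R-r_k)\dd x\to 0$, so $r_k\to r$ in $L^1(E)$. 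Fatou on $E$ and lower semicontinuity of $\int_{\Omega\setminus E}\conv_r f$ (convex in $r$, $u_k\to u$ strongly) then give $\liminf_k I(u_k,r_k)\ge\int_E f(x,u,R)\dd x+\int_{\Omega\setminus E}\conv_r f(x,u,r)\dd x>\scm I(u,r)$. The same holds with $I_n$ in place of $I$. Nothing in the hypotheses excludes this however large $R$ is, since enlarging $R$ only moves the boundary. In addition, the proof of Theorem~\ref{thm:main_Istar_scm_I} is a De Giorgi--Letta/blow-up argument and yields no global oscillating sequence whose values you could bound. Carath\'eodory bounds for $\conv_r f(x,u(x),r(x))$ would in any case depend on the particular $(u,r)$.

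\textbf{The missing observation} is that for the relaxed functionals no oscillation is needed: the constant sequence is already a recovery sequence. Given $(u,r)\in\cC$ with $\scm I(u,r)<\infty$, take $(u_k,r_k)\subset\cC$ with $I(u_k,r_k)\to\scm I(u,r)$ (Lemma~\ref{sc:min}). This family is bounded, hence equi-integrable by Lemma~\ref{lemma_c}. Lemma~\ref{prop:fn:f}(iv), i.e.\ your Step~2 estimate applied uniformly over the family, then gives $\eta_n\coloneqq\sup_k|I_n(u_k,r_k)-I(u_k,r_k)|\to 0$. Hence $\scm I_n(u,r)\le\liminf_k I_n(u_k,r_k)\le\scm I(u,r)+\eta_n$, so $\limsup_n(\scm I_n+\iota_R)(u,r)\le(\scm I+\iota_R)(u,r)$ without touching $\iota_R$. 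Combine this with your liminf inequality and the equi-coercivity of $\scm I_n+\iota_R$: the constants of Lemma~\ref{lemma:dist} are uniform in $n$, and $\|\nabla u\|_2^2\le R\|u\|_2^2$ on $\cC$. The standard argument then gives convergence of optimal data-driven solutions, in fact for every $R>0$.

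The paper takes a different route. It argues from equi-coercivity that for $R$ large the constraint is inactive at minimizers, so minimizers of $\scm I_n+\iota_R$ and $\scm I+\iota_R$ coincide with those of $\scm I_n$ and $\scm I$. It then applies \cite[Corollary~7.20]{DalMaso} to the unconstrained $\Gamma$-convergence from part (i). That is where the largeness of $R$ enters, and it requires confining minimizers with respect to $\|r\|_\infty$; your repaired route avoids this.
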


Part (i) of Theorem~\ref{thm:gammaconv} follows from Theorem~\ref{theorem:equiint} below while part (ii) will be proven at the end of this section. We need some technical lemmas.
While proving the equi-integrability of sequences is a non-standard task in \cite{BFL}, in our case, every bounded sequence is equi-$L^2$-integrable. This allows for a smoother progression of the argument. 

\begin{lemma}\label{lemma_c}
 Any bounded sequence $(u_n,r_n)\subset\mathcal{C}$ is equi-$L^2$-integrable, that is for all $\varepsilon>0$ there exists $\delta>0$ such that for any measurable set $E$ with $|E|<\delta$ then
 \begin{equation*}
 \sup_{n\in\N} \int_E \left( u_n^2 + r_n^2 \right) \dd x <\varepsilon.
 \end{equation*}
\end{lemma}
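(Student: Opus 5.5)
Throughout I read ``bounded'' as bounded in the phase space $V=H^1(\Omega)\times L^\infty(\Omega)$, i.e.\ $\sup_n \|u_n\|_{H^1}+\|r_n\|_\infty\le M$. I treat the two summands $u_n^2$ and $r_n^2$ separately and show that each has an integral over $E$ bounded by a constant times a positive power of $|E|$, uniformly in $n$. Then $\delta$ can be chosen explicitly in terms of $\varepsilon$ and $M$.

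\emph{Step 1: the constraint $\cC$ and the size of $\nabla u_n$.} Since $(u_n,r_n)\in\cC$, I test the weak formulation \eqref{eq:weak_n} with $\psi=u_n$. This is admissible by density of smooth functions in $H^1(\Omega)$, using that $r_nu_n\in L^2(\Omega)$. It gives the energy identity
\[
\|\nabla u_n\|_2^2=\int_\Omega r_n u_n^2\dd x\le \|r_n\|_\infty\|u_n\|_2^2 .
\]
So inside $\cC$ the $H^1$-norm of $u_n$ is controlled by $\|u_n\|_2$ and $\|r_n\|_\infty$ alone, exactly as in the proof of Lemma~\ref{l:coercivity_omega}(a). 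Consequently the argument below also applies under the weaker-looking assumption that only $\|u_n\|_2$ and $\|r_n\|_\infty$ are bounded, since in $\cC$ that already implies boundedness in $V$. In either reading we obtain $\sup_n\|u_n\|_{H^1}\le C$.

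\emph{Step 2: the $u_n$-part via Sobolev and Hölder.} $\Omega$ is a bounded Lipschitz domain, so $H^1(\Omega)\hookrightarrow L^p(\Omega)$ continuously for some $p>2$: take $p=2^\ast=2N/(N-2)$ if $N\ge3$, and any finite $p>2$, e.g.\ $p=4$, if $N=2$. Hölder's inequality with exponents $p/2$ and its conjugate then gives, for every measurable $E\subseteq\Omega$,
\[
\int_E u_n^2\dd x\le \|u_n\|_p^2\,|E|^{1-2/p}\le C_S^2\,C^2\,|E|^{1-2/p}.
\]
The right-hand side is uniform in $n$ and tends to $0$ as $|E|\to0$, because $1-2/p>0$.

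\emph{Step 3: the $r_n$-part and the choice of $\delta$.} Here $\int_E r_n^2\dd x\le M^2|E|$. Adding the two estimates gives
\[
\sup_n\int_E(u_n^2+r_n^2)\dd x\le C'\bigl(|E|^{1-2/p}+|E|\bigr),
\]
and we pick $\delta>0$ so small that the right-hand side is $<\varepsilon$ whenever $|E|<\delta$.

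The only genuinely non-trivial ingredient is the strict gain of integrability $p>2$ for $u_n$. Plain $L^2$-boundedness would not give equi-integrability, so Step 2 is where the argument could fail, and the Sobolev embedding on the Lipschitz domain is what rescues it. The constraint $\cC$ enters through Step 1: it guarantees that the $H^1$-bound needed for that embedding is available from the data $\|u_n\|_2$ and $\|r_n\|_\infty$. This is the form in which the lemma is used later, for minimizing and recovery sequences of $I_n$ lying in $\cC$.
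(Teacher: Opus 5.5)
Your proof is correct and follows the paper's argument: the $r_n$-part is immediate from the uniform $L^\infty$ bound, and the $u_n$-part comes from the Sobolev embedding $H^1(\Omega)\hookrightarrow L^{p}(\Omega)$ with $p>2$ combined with H\"older's inequality, which gives $\int_E u_n^2\dd x\le C|E|^{1-2/p}$. The paper takes $p=2q$ with $q<N/(N-2)$ instead of your endpoint $2^\ast$, which makes no difference, and your Step~1 energy identity is not needed when boundedness is understood in $V$, though it correctly shows the lemma also holds if only $\|u_n\|_2$ and $\|r_n\|_\infty$ are bounded.
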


\begin{proof} Take a bounded sequence $(u_n,r_n)\subset\mathcal{C}$. The equi-$L^2$-integrability of $r_n$ is trivial since $\sup_{n\in \N}\|r_n\|_\infty<\infty$. By the Sobolev embedding we have 
$$
\int_E |u_n|^2\dd x \le \left(\int_E |u_n|^{2q}\dd x\right)^\frac{1}{q} |E|^\frac{1}{q'} \leq C\|u_n\|_{H^1}^2 |E|^\frac{1}{q'} 
$$
where $q\in (1,\infty)$ for $N=1,2$ and $q\in (1,\frac{N}{N-2})$ for $N\geq 3$. This implies equi-$L^2$-integrability for the sequence $(u_n)$. 
\end{proof}

 In the following lemma, we prove a growth condition and continuity of the distance functions  $\dist(\cdot, \D_n)$ and $\dist(\cdot, \D)$. Moreover, we show that the convergence $\D_n \to \D$ from Definition~\ref{def_conv_data_set} ensures that $\dist(\cdot, \D_n)$ converges uniformly to $\dist(\cdot, \D)$ and the corresponding integral functionals $I_n$ converge to $I$ in a suitable sense.
 
\begin{lemma}\label{prop:fn:f}
Let $f_n(x,u,r)\coloneqq \dist((x,u,r),\D_n)$ and $f(x,u,r)\coloneqq \dist((x,u,r),\D)$. Under the assumptions of Theorem \ref{thm:gammaconv}, they have the following properties:
\begin{enumerate}[label=(\roman*)]
 \item There exists a constant $c>0$, such that for all $(x,u,r)\in Y$, we have 
 $$
 0 \le f_n(x,u,r),\,f(x,u,r) \le c(1 + |x|^2 + |u|^2 + |r|^2).
 $$
 \item $f_n$ and $f$ are uniformly continuous on bounded subsets of $Y$.
 \item $f_n$ converges uniformly to $f$ on compact subsets of $Y$. 
 \item $I_n$ converges uniformly to $I$ on equi-$L^2$-integrable subsets of $V$, i.e. for all equi-$L^2$-integrable sets $B \subset V$ and for all $\varepsilon>0$ there exists $n_{\varepsilon} \in \N$, such that for all $(u,r) \in B$ and all $n \ge n_{\varepsilon}$ we have
 $$
 \left \vert \int_{\Omega} f_n(x,u(x),r(x)) - f(x,u(x), r(x)) \dd x \right \vert \le \varepsilon.
 $$
\end{enumerate}
\end{lemma}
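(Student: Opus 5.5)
We prove the four properties in order, relying only on elementary properties of the pseudo-distance $\dist(z,\mathcal A)=\inf_{\tilde z\in\mathcal A}|z-\tilde z|^2$ and on Definition~\ref{def_conv_data_set}.

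\textbf{(i) Growth bound.} Nonnegativity is trivial. For the upper bound we need a fixed data point in every $\D_n$ that stays in a bounded region. Fix $z_0\in\D$, which exists since $\D$ is nonempty, and choose $S>|z_0|$. By \eqref{eq:def_convergence} we have $\dist(\D\cap B_S(0),\D_n)\to0$, so for large $n$ there is $z_n\in\D_n$ with $|z_n|\le |z_0|+1$. For the finitely many remaining $n$ we simply pick any point of $\D_n$. Hence there is $M>0$ with $|z_n|\le M$ for all $n$. Then
\[
f_n(z)\le|z-z_n|^2\le 2|z|^2+2M^2\le c(1+|z|^2),
\]
and the same estimate holds for $f$ with $z_0$ in place of $z_n$.

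\textbf{(ii) Uniform continuity on bounded sets.} The square root $\sqrt{f_n(\cdot)}=\inf_{\tilde z\in\D_n}|\cdot-\tilde z|$ is $1$-Lipschitz. Therefore
\[
|f_n(z)-f_n(w)|=\bigl|\sqrt{f_n(z)}-\sqrt{f_n(w)}\bigr|\,\bigl(\sqrt{f_n(z)}+\sqrt{f_n(w)}\bigr)\le |z-w|\cdot 2\sqrt{c(1+S^2)}
\]
for $|z|,|w|\le S$, by (i). This gives Lipschitz continuity on bounded sets, uniformly in $n$, and likewise for $f$.

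\textbf{(iii) Locally uniform convergence.} Let $|z|\le S$. By (i), near-minimizers $\tilde z$ for $f(z)$ satisfy $|\tilde z|\le S'\coloneqq S+\sqrt{c(1+S^2)}+1$. By \eqref{eq:def_convergence} there is $\hat z\in\D_n$ with $|\tilde z-\hat z|\le\epsilon_n\coloneqq\dist(\D\cap B_{S'}(0),\D_n)^{1/2}\to0$. Hence $\sqrt{f_n(z)}\le\sqrt{f(z)}+\epsilon_n+o(1)$. The symmetric argument uses the second limit in \eqref{eq:def_convergence} together with the uniform bound from (i) for $f_n$. Multiplying by $\sqrt{f_n}+\sqrt f$, which is bounded on $B_S(0)$, gives $\sup_{|z|\le S}|f_n-f|\to0$. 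The key point here is that minimizers stay in a ball whose size is independent of $n$.

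\textbf{(iv) Uniform convergence of the integral functionals.} Fix $\varepsilon>0$ and an equi-$L^2$-integrable set $B$. For $(u,r)\in B$ and $S>0$, split $\Omega$ into $E_S=\{|u|^2+|r|^2>S^2\}$ and its complement. Since $B$ is equi-integrable it is bounded in $L^2$ (as $\Omega$ is bounded), so Chebyshev gives $|E_S|\le C/S^2$, uniformly over $B$.
\begin{itemize}
\item On $E_S$, by (i),
\[
\int_{E_S}|f_n-f|\le 2c\int_{E_S}\bigl(1+|x|^2+u^2+r^2\bigr)\dd x,
\]
which is $<\varepsilon/2$ for $S$ large, using equi-integrability and the boundedness of $\Omega$.
\item On $\Omega\setminus E_S$ the argument $(x,u,r)$ lies in the bounded set $\overline\Omega\times\overline{B_S(0)}$. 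Since (ii) holds on its closure, (iii) applies there, and for large $n$ the integral is at most $|\Omega|\sup|f_n-f|<\varepsilon/2$.
\end{itemize}

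The main obstacle is (iii), specifically the uniform localisation of near-minimizers needed to invoke the Hausdorff-type convergence on a fixed ball $B_{S'}(0)$. This is exactly what the growth bound (i) provides. The values $I_n=I=\infty$ off $\cC$ need no treatment, since the statement of (iv) only concerns the integrals.
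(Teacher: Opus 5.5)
Your proof is correct and has the same four-step skeleton as the paper's proof. Your execution differs at three points, each time in a way that is more general or more careful.

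\textbf{Part (i).} You extract a uniformly bounded point of $\D_n$ from $\dist(\D\cap B_S(0),\D_n)\to 0$ together with $\D\neq\emptyset$. The paper instead appeals to $\dist(\D_n\cap B_S(0),\D)\to 0$. That condition cannot by itself force $\D_n\cap B_S(0)\neq\emptyset$, since a supremum over the empty set is harmless. Your direction is the one that actually works.

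\textbf{Part (iii).} The paper uses closedness of $\D_n$ and $\D$ to pick exact nearest points. It first proves $f_n\le f+\sigma_n$ on the compact set $K$ in order to localize the nearest points in $\D_n$. You instead localize near-minimizers directly through the $n$-uniform growth bound (i). This never uses closedness, so it shows the paper's remark that closedness is needed in (iii) is not essential. It also gives a quantitative estimate: $|\sqrt{f_n}-\sqrt f|$ on $\{|z|\le S\}$ is controlled by the two directed Hausdorff distances on a slightly larger ball. And it yields uniform convergence on every ball, not only on compact subsets of $Y$.

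\textbf{Part (iv).} The paper asserts $\|r\|_\infty\le C_B$ for all $(u,r)\in B$. It then cuts out $\Omega\setminus K$ for a compact $K\subset\Omega$, so that the good set maps into a compact subset of $Y$. Such an $L^\infty$ bound does not follow from equi-$L^2$-integrability: take $r_k=k\chi_{E_k}$ with $|E_k|=k^{-4}$. The bound does hold in the only application, a weak-weak$^\ast$ convergent sequence. Your truncation on $\{u^2+r^2>S^2\}$ uses only the $L^2$ bound implied by equi-integrability on a bounded domain. It therefore proves (iv) exactly as stated.

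\textbf{One wording fix.} On $\Omega\setminus E_S$ the real justification is that your version of (iii) holds on every ball of $\R^{N+2}$. It is not an appeal to (ii) on a closure, since $\overline\Omega\times\overline{B_S(0)}$ is not contained in $Y$.
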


\begin{proof}
(i) {Since for any given $S>0$ the sequence $\dist(\D_n\cap B_S(0),\D)$ converges to $0$ we get in particular that the set $\D_n\cap B_S(0)$ is nonempty for large enough $n$. We can therefore choose $S>0$ such that $\D_n\cap B_S(0)\neq \emptyset$ for all $n\in \N$.} If we take $(\tilde x, \tilde u, \tilde r)\in \D_n\cap B_S(0)$ then we may estimate for any $(x,u,r)\in Y$
\begin{align*}
   \dist((x,u,r), \D_n) &\le |x-\tilde x|^2+|u-\tilde u|^2+|r-\tilde r|^2\le 2\left( |x|^2+|u|^2+|r|^2+S^2 \right)
   \\ &
   \le c(1+ |x|^2 + |u|^2 + |r|^2) 
\end{align*}
with a similar estimate for $\dist((x,u,r),\D)$.

\medskip

(ii) It follows since both $f_n$ and $f$ are uniformly Lipschitz continuous on bounded subsets of $Y$.

\medskip

(iii) Let $K \subset Y$ compact and fix $z \in K$. We divide the proof into two steps.

\medskip

{
{\it Step $1$.}
Since $\D$ is closed, we can find $\tilde{z}(z)  \in \mathcal{D}$ such that $f(z) = |z-\tilde z(z)|^2$.
By the boundedness of $K$, $\tilde{z}(z)$ is contained in a ball $B_{S_1}(0)$ for some $S_1>0$. By Definition~\ref{def_conv_data_set} there 
exists a sequence $a_n^{S_1}\to 0$ such that
$$
f_n(\tilde z(z))\le a_n^{S_1}.
$$
Again by closedness of $\D_n$, for all $n\in\N$ there exist  $z_n(z) \in \mathcal{D}_n$ such that $f_n(\tilde z(z)) = |\tilde{z}(z) - z_n(z)|^2$. Then it follows that
    \begin{align*}
    f_n(z) &\le |z - z_n(z)|^2 \le  (|z - \tilde{z}(z)| + |\tilde{z}(z) - z_n(z)|)^2\\
    &= f(z) + f_n(\tilde z(z)) + 2\sqrt{f(z)f_n(\tilde z(z))}\\
    &\le f(z) + a_n^{S_1}+ 2\sqrt{f(z)a_n^{S_1}}
    \end{align*}
    and by the compactness of $K$ this implies 
    \begin{equation}\label{eq_unif_bound1}
            f_n(z) \le f(z) + \sigma_n^{S_1} \quad \mbox{ for some $\sigma_n^{S_1} \to 0$ }
    \end{equation}
uniformly for $z\in K$ as $n\to\infty$. Note that \eqref{eq_unif_bound1} also implies that $f_n(z)$ is uniformly bounded for $z\in K$ and $n\in\N$.
    
{\it Step $2$.}
Since $\D_n$ are closed, we can find $w_n(z) \in \mathcal{D}_n$ satisfying $f_n(z) = |z - w_n(z)|^2$ for all $n\in\N$. Note that there exists $S_2>0$ such that
    $$
    |w_n(z)|\le |w_n(z)-z|+|z|=\sqrt{f_n(z)}+|z|\le S_2
    $$
by the boundedness of $K$ and by the uniform boundedness of $f_n$ on $K$ given by {\it Step $1$}. Thus, for all $n\in\N$ and $z\in K$ the points $w_n(z)$ are contained in the ball $B_{S_2}(0)$. By Definition~\ref{def_conv_data_set} there exists a sequence $b_n^{S_2}\to 0$ and by the closedness of $\D$ there exists $\hat{w}_n(z) \in \mathcal{D}$ such that $ f(w_n(z)) = |w_n(z) - \hat{w}_n(z)|^2 \le b_n^{S_2}$. Similarly as before
    \begin{equation*}
    f(z) \le |z - \hat{w}_n(z)|^2 \le (|z - w_n(z)| + |w_n(z) - \hat{w}_n(z)|)^2 \le  f_n(z) + b_n^{S_2}+ 2\sqrt{f_n(z) b_n^{S_2}}
    \end{equation*}
which implies 
    \begin{equation} \label{eq_unif_bound2}
    f(z) \le f_n(z) + \tau_n^{S_2} \mbox{ for some } \tau_n^{S_2} \to 0
    \end{equation} 
uniformly for $z\in K$ as $n\to\infty$.}

If we collect the results \eqref{eq_unif_bound1} from {\it Step $1$} and \eqref{eq_unif_bound2} from {\it Step $2$} then we obtain the claim $\sup_{z \in K} |f_n(z) - f(z)| \to 0$.

\medskip

(iv)
Fix $\varepsilon > 0$. Since $B \subset V = H^1(\Omega) \times L^\infty(\Omega)$ is equi-$L^2$-integrable, there exists a constant $C_B > 0$ such that $\|r\|_\infty \le C_B$ for all $(u,r) \in B$. Furthermore, the family of functions $\{ 1 + |\cdot|^2+ |u|^2 + |r| ^2 : (u,r) \in B \}$ is equi-integrable in $L^1(\Omega)$. So we can find a constant $M > 0$ and a compact subset $K \subset \Omega$ such that,  for all $(u,r) \in B$:
\begin{equation} \label{eq:equi}
\int_{E_M} (1 + |x|^2+|u(x)|^2 + |r(x)|^2) \dd x < \frac{\varepsilon}{4c},
\end{equation}
where $c$ is the growth constant from (i) and
\begin{equation*}
E_M \coloneqq \{ x \in \Omega : |u(x)| > M \} \cup (\Omega \setminus K)
\end{equation*}
which follows from Lemma \ref{lemma_c} and Chebyshev's inequality 
$$
|\{ x \in \Omega : |u(x)| > M \}|\le \frac{1}{M^2}\int_{\Omega}|u(x)|^2\dd x\le \frac{C}{M^2},
$$
and where $C>0$ is the uniform upper bound on the $L^2$-norm of $u$ for elements $(u,r)\in B$. On $\Omega\setminus E_M$ the points $(x, u(x), r(x))$ take values in 
\begin{equation*}
K_M \coloneqq K \times [0, M] \times [-C_B, C_B]
\end{equation*}
which is a compact subset of $Y$. By (iii), $f_n$ converges to $f$ uniformly on $K_M$; hence, there exists $n_\varepsilon \in \mathbb{N}$ such that for all $n \ge n_\varepsilon$:
\begin{equation*}
\sup_{z \in K_M} |f_n(z) - f(z)| < \frac{\varepsilon}{2|\Omega|}.
\end{equation*}
We now estimate by (i) and \eqref{eq:equi}
\begin{align*}
\int_{\Omega} &|f_n(x, u(x), r(x)) - f(x, u(x), r(x))| \dd x \\&= \int_{E_M} |f_n(x, u(x), r(x)) - f(x, u(x), r(x))| \dd x + \int_{\Omega \setminus E_M} |f_n(x, u(x), r(x)) - f(x, u(x), r(x))| \dd x \\
&\le 2c \int_{E_M} (1 +|x|^2+ |u(x)|^2 + |r(x)|^2) \dd x + \int_{\Omega \setminus E_M} \sup_{z \in K_M} |f_n(z) - f(z)| \dd x \\
&< 2c \left( \frac{\varepsilon}{4c} \right) + |\Omega| \frac{\varepsilon}{2|\Omega|} = \varepsilon
\end{align*}
and we conclude.
\end{proof}

\begin{remark}
Note that the closedness of the data sets in Definition \ref{def_conv_data_set} and Theorem \ref{thm:gammaconv} is primarily relevant and utilized in part (iii) of Lemma \ref{prop:fn:f}.
\end{remark}

Part (i) of Theorem \ref{thm:gammaconv} follows from the next theorem, which shows that the sequence of functionals $(I_n)$ admits a $\Gamma$-limit which coincides with $\scm I$.

\begin{theorem} \label{theorem:equiint} Under the assumptions of Theorem \ref{thm:gammaconv} the sequential $\Gamma$-limit of the constant sequence $I$ as well as the sequential $\Gamma$-limit of $I_n$ exist and we have
 $$
 \Gamma - \lim_{n\to\infty} I_n 
 = 
 \Gamma - \lim_{n\to\infty} I = \scm I. 
 $$
 \end{theorem}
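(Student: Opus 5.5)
We will prove the statement in two stages: first the constant sequence, then the perturbed one, comparing $I_n$ with $I$ along equi-$L^2$-integrable sequences. Throughout we work with sequential $\Gamma$-convergence in the weak-weak$^\ast$ topology. By Lemma~\ref{l:coercivity_omega}(b) (applied to $\D$ and to each $\D_n$, all contained in $\Omega\times\Sigma_{a,b}$), the functionals $I,I_n,\scm I,\scm I_n$ satisfy the uniform lower bound $c_1(\|u\|_2^2+\|r\|_2^2)-c_2|\Omega|$ on $\cC$. Here $c_1,c_2,\gamma$ depend only on $a,b$ through Lemma~\ref{lemma:dist}(b). On $\cC$, boundedness of $I_n(u_n,r_n)$ therefore gives $L^2$-bounds on $u_n,r_n$, and hence $H^1$-bounds on $u_n$.

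\textbf{The constant sequence.} The $\liminf$ inequality $\scm I(u,r)\le\liminf I(u_n,r_n)$ is immediate from the definition \eqref{def:sc-} of $\scm I$. For the recovery sequence, fix $(u,r)$ with $\scm I(u,r)<\infty$. By definition there are sequences converging weakly-weakly$^\ast$ to $(u,r)$ with $I$-values approaching $\scm I(u,r)$, and a diagonal argument yields a single sequence $(u_n,r_n)\rightharpoonup(u,r)$ with $\lim I(u_n,r_n)=\scm I(u,r)$. The diagonal extraction is legitimate because all candidate sequences are bounded: each is weakly-weakly$^\ast$ convergent, hence bounded by uniform boundedness, and the metric $d$ is available on bounded sets. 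This gives $\Gamma\text{-}\lim I=\scm I$.

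\textbf{The sequence $I_n$, $\liminf$ inequality.} Let $(u_n,r_n)\rightharpoonup(u,r)$ with $\liminf I_n(u_n,r_n)<\infty$, and pass to a subsequence realizing the $\liminf$. Then $(u_n,r_n)\in\cC$ and the sequence is bounded in $V$. By Lemma~\ref{lemma_c} the family $B=\{(u_n,r_n)\}$ is equi-$L^2$-integrable. Lemma~\ref{prop:fn:f}(iv) then gives $|I_n(u_n,r_n)-I(u_n,r_n)|\to0$, so
\[
\liminf_n I_n(u_n,r_n)=\liminf_n I(u_n,r_n)\ge\scm I(u,r).
\]

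\textbf{The sequence $I_n$, recovery sequence.} Take the recovery sequence $(u_k,r_k)\in\cC$ for the constant sequence, with $I(u_k,r_k)\to\scm I(u,r)$. It is again bounded, hence equi-$L^2$-integrable, so by Lemma~\ref{prop:fn:f}(iv) we have $\sup_k|I_n(u_k,r_k)-I(u_k,r_k)|\to0$ as $n\to\infty$. Using the same sequence, with $n=k$, gives $\limsup_n I_n(u_n,r_n)=\scm I(u,r)$.

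\textbf{Passing to the $\Gamma$-limit of $\scm I_n$.} For $\scm I_n$ the recovery inequality is trivial since $\scm I_n\le I_n$. For the $\liminf$ inequality, given $(u_n,r_n)\rightharpoonup(u,r)$ with bounded $\scm I_n$, we choose for each $n$ an element $(v_n,s_n)\in\cC$ with
\[
d\bigl((v_n,s_n),(u_n,r_n)\bigr)<\tfrac1n,\qquad I_n(v_n,s_n)\le\scm I_n(u_n,r_n)+\tfrac1n,
\]
again using the definition of $\scm I_n$ and boundedness. Then $(v_n,s_n)\rightharpoonup(u,r)$, and the previous step yields the claim.

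\textbf{Main obstacle.} The delicate point is bookkeeping of boundedness, namely making sure the uniform lower bounds and the localization to bounded sets (for metrizability and for the diagonal arguments) hold uniformly in $n$. This uniformity is exactly what the common inclusion $\D_n\subseteq\Omega\times\Sigma_{a,b}$ with fixed $a,b$ provides, since it gives constants independent of $n$ in Lemma~\ref{lemma:dist}(b).
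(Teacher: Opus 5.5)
Your proof is essentially the paper's: in both, weak-weak$^\ast$ convergent sequences in $\cC$ are bounded and hence equi-$L^2$-integrable (Lemma~\ref{lemma_c}), so Lemma~\ref{prop:fn:f}(iv) gives $I_n(u_n,r_n)-I(u_n,r_n)\to 0$ along them, and this transfers the $\liminf$ inequality and the recovery sequences of the constant sequence $I$ (whose $\Gamma$-limit is $\scm I$) to $I_n$. The only deviation is that you treat $\scm I_n$ by a direct diagonal approximation instead of citing \cite[Proposition~6.11]{DalMaso}. That step needs all approximants in one common bounded subset of $H^1(\Omega)\times L^\infty(\Omega)$, and so do your diagonalization for the constant sequence and the paper's Lemma~\ref{sc:min}; boundedness of each approximating sequence separately is not enough, and the inclusion in $\Omega\times\Sigma_{a,b}$ only controls $r$ in $L^2$, not with the required uniform $L^\infty$-bound.
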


\begin{proof} Since $I$ is coercive by Lemma~\ref{l:coercivity_omega}(b), 
the sequential $\Gamma$-limit of the constant sequence $I$ exists and coincides with $\scm I$  (as remarked at the beginning of this section). The remaining part of the proof about the sequential $\Gamma$-limit of the sequence $(I_n)$ is divided into two steps. 

\medskip

{\it Step 1.} First we prove that for all sequences $(u_n,r_n)$ in $V$ such that $u_n \rightharpoonup u$ in $H^1(\Omega)$ and $r_n\stackrel{*}{\rightharpoonup} r$ in $L^\infty(\Omega)$ we have that 
 \begin{equation}\label{eq_limsupinf_new}
 \limsup_{n\to\infty} I_n(u_n, r_n)= \limsup_{n\to\infty} I(u_n, r_n), \quad  \liminf_{n\to\infty} I_n(u_n, r_n)= \liminf_{n\to\infty} I(u_n, r_n). 
 \end{equation}
Since $(u_n,r_n)$ is bounded by weak-weak$^\ast$ convergence the sequence $(u_n,r_n)$ is equi-$L^2$-integrable by Lemma~\ref{lemma_c} and therefore  Lemma \ref{prop:fn:f}(iv) yields
\begin{equation}\label{eq_lim}
 \lim_{n\to\infty} \left|
 \int_{\Omega} f_n (x,u_n, r_n) -f (x,u_n, r_n) \dd x\right| = 0.
 \end{equation}
Next we consider a distinction into several cases. Case~1: $(u_n, r_n)$ has no subsequence in $\cC$. In this case all four values in \eqref{eq_limsupinf_new} are equal to $\infty$. Case~2: Let $(\tilde u_n, \tilde r_n)$ be the subsequence of $(u_n,r_n)$ which collects all elements in $\cC$ so that $\liminf_{n\to\infty} I(u_n, r_n)= \liminf_{n\to\infty} I(\tilde u_n, \tilde r_n)<\infty$ and $\liminf_{n\to\infty} I_n(u_n, r_n)= \liminf_{n\to\infty} I_n(\tilde u_n, \tilde r_n)<\infty$. They are equal because $\liminf_{n\to\infty} a_n=\liminf_{n\to\infty} b_n$ provided we know $\lim_{n\to\infty} a_n-b_n=0$, cf. \eqref{eq_lim}. Now we need to consider both $\limsup$-expressions. We do this in two further subcases. Case~2.1: $(u_n,r_n)$ has a subsequence in $V\setminus \cC$ . In this case both $\limsup$ are equal to $\infty$. Case~2.2: $(u_n,r_n)$ has no subsequence in $V\setminus \cC$. In this case both $\limsup$ are finite and equal as seen in Case~2 but with $\liminf$ instead of $\limsup$. 

\medskip
{\it Step 2 -- conclusion.} We already know that the sequential $\Gamma$-limit of $I$ exists, and hence coincides with $\scm I$. To establish that  $\Gamma - \lim_{n\to\infty} I_n=\scm I$ we need to show that the following holds true for all $(u,r) \in V$: 
\begin{enumerate}
 \item[(a)] Every sequence $(u_n, r_n) \subset \mathcal{C}$ with $u_n \rightharpoonup u$ in $H^1(\Omega)$ and $r_n\stackrel{*}{\rightharpoonup} r$ in $L^\infty(\Omega)$ satisfies $\scm I(u,r) \le \liminf_{n\to\infty} I_n (u_n, r_n)$.
 \item[(b)] There exists a sequence $(u_n, r_n) \subset \mathcal{C}$ with $u_n \rightharpoonup u$ in $H^1(\Omega)$ and $r_n\stackrel{*}{\rightharpoonup} r$ in $L^\infty(\Omega)$, such that $\operatorname{sc}^-I(u,r) \ge \limsup_{n\to\infty}  I_n (u_n, r_n)$.
\end{enumerate}
Part (a) is ensured because {\it Step 1} implies 
$$
 \liminf_{n\to\infty} I_n(u_n, r_n)= \liminf_{n\to\infty} I(u_n, r_n) \ge \operatorname{sc}^-I(u,r).
$$
On the other hand, since $\Gamma-\lim I = \scm I$ exists, there exists a sequence $(u_n,r_n)$ such that $u_n\rightharpoonup u$ in $H^1(\Omega)$, $r_n\stackrel{*}{\rightharpoonup} r$ and $\scm I(u,r)\ge \limsup I(u_n,r_n)$. Together with {\it Step 1} this implies
$$
 \scm I(u,r) \ge \limsup_{n\to\infty} I(u_n, r_n) = \limsup_{n\to\infty} I_n(u_n, r_n),
$$
and hence (b) is shown. This completes the proof of $\Gamma-\lim_{n\to\infty} I_n =\scm I$. The fact that also $\Gamma-\lim_{n\to\infty} \scm I_n=\scm I$ follows from \cite[Proposition~6.11]{DalMaso}. 
\end{proof}

To finish the proof of Theorem~\ref{thm:gammaconv}, it remains to prove the statement on the convergence of optimal data-driven solutions.

{
\begin{proof}[Proof of part (ii) of Theorem~\ref{thm:gammaconv}:] In general, $\scm I_n+\iota_R$ does not $\Gamma$-converge to $\scm I+\iota_R$ because $\Gamma$-convergence is not additive but super-additive. However, in the case of Theorem~\ref{thm:gammaconv}, the functionals $I_n, \scm I_n$, $I$, and $\scm I$ are equi-coercive (by Lemma~\ref{l:coercivity_omega}(b) and since $\D_n, \D\subseteq \Omega\times \Sigma_{a,b}$ for all $n\in \N$) so that potential minimizers of $\scm I_n$ and $\scm I$ lie in a bounded set $M\subset V$. If we choose $R>0$ so large that $M\subset B_R(0,0)$, where $B_R(0,0)$ should be considered with the $L^2(\Omega)\times L^\infty(\Omega)$ norms, then the set of minimizers of $\scm I_n$ coincides with the set of minimizers of $\scm I_n+\iota_R$ and likewise the set of minimizers of $\scm I$ coincides with the set of minimizers of $\scm I+\iota_R$.

\medskip
From these considerations, the weak convergence of minimizers of $\scm I_n+\iota_R$ (i.e. optimal data-driven solutions of \eqref{eq:main} for the data set $\D_n$) to minimizers of $\scm I+\iota_R$ (i.e. optimal data-driven solutions of \eqref{eq:main} for the data set $\D$) follows from the weak convergence of minimizers of $\scm I_n$ to minimizers of $\scm I$
which holds by  \cite[Corollary~7.20]{DalMaso} and the equi-coercivity of $\scm I_n$.  
\end{proof}
}

\section{Identification of the lower semi-continuous envelope $\scm I$} \label{sec_scmI=Istar}

In this section we prove Theorem~\ref{thm:main_Istar_scm_I}, i.e., we show that $\operatorname{sc}^-I$ { defined in \eqref{def:sc-}} coincides with $I^\ast$, {from Definition~\ref{def:conv}}. We follow some ideas given in \cite[page 557, Theorem 3.6]{BFL}.

\subsection{Identification of $\scm I$ as a Radon measure}\label{sec_id:1}

Since $\scm I$ is defined in \eqref{def:sc-} in an abstract way, the first goal is to show that $\scm I$ can be seen as a Radon measure which is absolutely continuous with respect to the Lebesgue measure. For this purpose we first need to extend the functional $I$ to all open subsets $A\subseteq \Omega$ as follows:
\begin{align}\label{eq:I_f_A}
    I_A(u,r) \coloneqq 
     \begin{cases} 
         \int_A \dist( (x,u(x),r(x)), \D) \dd x, & (u,r) \in \cC \\ 
         \infty, & \text{otherwise}.
     \end{cases}
\end{align}
This also leads to the extension of $\scm I$
\begin{align} \label{def:scm_I_A}
\scm I_A (u,r) \coloneqq \inf \Bigl\{& \liminf_{n\to\infty} I_A(u_n,r_n): (u_n, r_n) \in V, u_n \rightharpoonup u \text { in } H^1(\Omega), r_n\stackrel{*}{\rightharpoonup} r \text { in } L^{\infty}(\Omega) \Bigr\}.
\end{align}
We now prove for fixed $(u,r)\in \cC$ that $ A \mapsto \operatorname{sc}^- I_A$ coincides with a Radon measure (i.e., a regular measure that is finite on compact sets; see \cite[p.~212]{folland}) which is absolutely continuous with respect to the Lebesgue measure. The primary tool for this proof will be the De Giorgi–Letta Theorem (Theorem \ref{thm_degiorgi}).

\begin{theorem}\label{abs:cont}
Let $(u,r)\in \cC$. Then there exists a Radon measure $\lambda$ on $\Omega$ such that $\scm I_A(u, r)=\lambda(A)$ for all open subsets $A\subseteq\Omega$. Moreover, the Radon measure is absolutely continuous with respect to the Lebesgue measure so that $F\coloneqq\frac{d \lambda}{d \mathcal{L}^N}$ is the $L^1$-density of $\lambda$, i.e.,
\begin{equation}\label{eq_deriv}
\scm I_A(u,r) = \int_A F(x) \dd x 
\end{equation}
for all open subsets $A\subseteq \Omega$.
\end{theorem}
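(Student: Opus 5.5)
We apply the De Giorgi--Letta criterion (Theorem~\ref{thm_degiorgi}) to the set function $\alpha(A)\coloneqq \scm I_A(u,r)$, defined on the open subsets $A\subseteq\Omega$, for a fixed $(u,r)\in\cC$. Four properties have to be checked.

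\emph{Monotonicity.} If $A\subseteq B$, then $I_A\le I_B$ pointwise, and taking relaxations preserves this, so $\alpha(A)\le\alpha(B)$.

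\emph{Upper bound by a finite absolutely continuous measure.} Take the constant sequence $(u_n,r_n)=(u,r)\in\cC$. Lemma~\ref{prop:fn:f}(i) gives
\begin{equation*}
\alpha(A)\le I_A(u,r)\le c\int_A\bigl(1+|x|^2+u^2+r^2\bigr)\dd x \eqqcolon \mu(A).
\end{equation*}
The growth bound in (i) only needs $\D$ to be nonempty. Since $u\in L^2$, $r\in L^\infty$ and $\Omega$ is bounded, $\mu$ is a finite measure with density in $L^1(\Omega)$. So $\alpha$ is finite and dominated by $\mu$.

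\emph{Superadditivity on disjoint sets.} Let $A,B$ be disjoint and open, and let $(u_n,r_n)$ be an arbitrary sequence admissible for $\alpha(A\cup B)$, so it lies in $\cC$ for all large $n$. Then
\begin{equation*}
\liminf_n I_{A\cup B}(u_n,r_n)\ge\liminf_n I_A(u_n,r_n)+\liminf_n I_B(u_n,r_n)\ge\alpha(A)+\alpha(B).
\end{equation*}
Taking the infimum over such sequences gives $\alpha(A\cup B)\ge\alpha(A)+\alpha(B)$.

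\emph{Subadditivity.} This is the main obstacle: we need $\alpha(A)\le\alpha(A')+\alpha(B)$ whenever $A\Subset A''\Subset A'$, since together with inner regularity it yields the remaining De Giorgi--Letta condition. The usual fundamental-estimate argument glues a recovery sequence $(v_n,s_n)$ for $A'$ and a recovery sequence $(w_n,t_n)$ for $B$ using cut-off functions $\varphi$ in a layer between $A$ and $A''$. Here the glued pair must stay in the nonlinear constraint $\cC$. The key point is that any admissible sequence converges to the \emph{same} limit $(u,r)$. We would therefore construct recovery sequences whose $u$-components equal $u$ exactly and only oscillate in $r$. Glue $s_n$ and $t_n$ by taking $r$-values from $s_n$ on $A''$ and from $t_n$ off $A''$, and keep the first component $u$. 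Then $-\Delta u=ru$ forces $r$ wherever $u>0$, so recovery sequences with nontrivial $r$-oscillation must instead exploit the set $\{u=0\}$ or small $u$-perturbations. Let $(u_n,r_n)\in\cC$ be a recovery sequence. Since $u_n\to u$ strongly in $L^2$ and, by Lemma~\ref{lemma_c}, the sequence is equi-integrable, the cost of replacing $u_n$ by $u$ on a region is controlled through the uniform continuity of $\dist$ (Lemma~\ref{prop:fn:f}(ii)). For the gluing in $\cC$, define on the layer the convex combination $\tilde u_n=\varphi v_n+(1-\varphi)w_n$. Both $v_n$ and $w_n$ converge strongly in $L^2$ to $u$, so define the growth rate by
\begin{equation*}
\tilde r_n\tilde u_n\coloneqq\varphi s_nv_n+(1-\varphi)t_nw_n-2\nabla\varphi\cdot\nabla(v_n-w_n)-\Delta\varphi\,(v_n-w_n),
\end{equation*}
where this equals $-\Delta\tilde u_n$. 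The error terms tend to zero weakly, but dividing by $\tilde u_n$ is delicate where $u$ is small. We would handle this by choosing, via averaging over many thin layers as in the proof of \cite[Theorem~3.6]{BFL}, a layer on which the error is small, and by accepting an $o(1)$ change of $\tilde r_n$ in $L^1$ that affects the cost by $o(1)$ through the equi-integrability argument of Lemma~\ref{prop:fn:f}(iv). The $L^\infty$ bound on $\tilde r_n$ is the technical issue to be watched. Inner regularity, $\alpha(A)=\sup\{\alpha(A'):A'\Subset A\}$, follows from this subadditivity combined with $\alpha(A\setminus\overline{A'})\le\mu(A\setminus\overline{A'})\to 0$.

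\emph{Conclusion.} De Giorgi--Letta then extends $\alpha$ to a Borel measure $\lambda$, which is Radon because $\lambda\le\mu$ is finite. The domination $\lambda\le\mu\ll\mathcal L^N$ gives absolute continuity, and Radon--Nikodym produces the density $F\in L^1(\Omega)$ in \eqref{eq_deriv}.
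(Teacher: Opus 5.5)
\paragraph{Verdict: genuine gap in the subadditivity step.} Your monotonicity argument, the domination $\alpha\le\mu$, superadditivity on disjoint sets and the closing Radon--Nikodym step are fine. The subadditivity step is not actually proved, however, and your inner regularity depends on it. The sketch you give would not close.

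\paragraph{Why the cut-off sketch fails.} With $\tilde u_n=\varphi v_n+(1-\varphi)w_n$ you get $-\Delta\tilde u_n=\varphi s_nv_n+(1-\varphi)t_nw_n+E_n$, where $E_n=-2\nabla\varphi\cdot\nabla(v_n-w_n)-\Delta\varphi\,(v_n-w_n)$. Knowing only $E_n\rightharpoonup 0$, you cannot bound $\tilde r_n=-\Delta\tilde u_n/\tilde u_n$ in $L^\infty$. Nor can you identify its weak$^\ast$ limit or control the non-convex cost on the layer. Your proposed remedies do not help either:
\begin{itemize}
\item An $L^1$-small change of $\tilde r_n$ without changing $\tilde u_n$ leaves $\cC$, where $I=\infty$.
\item Averaging over layers as in \cite{BFL} controls integrated energies, not a pointwise division by $\tilde u_n$.
\end{itemize}
The inequality you state is also garbled. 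With $A\Subset A'$ it already follows from monotonicity. What is needed is $\alpha(A'\cup B)\le\alpha(A)+\alpha(B)$ for $A'\Subset A$.

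\paragraph{The missing idea.} You touch it and then discard it: freeze $u$ but let the $r$-sequence be \emph{unconstrained}. This is Lemma~\ref{scm_alt_def}, $\scm I(u,r)=\inf\{\liminf_n\int_\Omega f_u(x,r_n(x))\dd x : r_n\stackrel{*}{\rightharpoonup}r \text{ in } L^\infty(\Omega)\}$, used also with $\int_A$ in place of $\int_\Omega$. The paper proves it by reconstruction rather than gluing:
\begin{itemize}
\item Solve $-\Delta\tilde u_n+\alpha\tilde u_n=(\alpha+r_n)u$ with Neumann data, so $\tilde u_n\to u$ in $H^1$ and uniformly by De Giorgi--Nash.
\item Set $\tilde r_n\coloneqq(\alpha+r_n)u/\tilde u_n-\alpha$. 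Since $\inf_\Omega u>0$ for $u\not\equiv 0$, $\tilde r_n-r_n\to 0$ uniformly and $(\tilde u_n,\tilde r_n)\in\cC$.
\item Lemma~\ref{lem:I_compare} shows the cost changes only by $o(1)$.
\item The case $u\equiv 0$ is trivial, since $(0,r_n)\in\cC$.
\end{itemize}
Your remark that $(u,r_n)\in\cC$ forces $r_n=r$ on $\{u>0\}$ is correct but beside the point.

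\paragraph{How it closes the proof.} Once the constraint is gone, subadditivity is immediate by patching two almost-optimal $r$-sequences, $V_n\coloneqq r_n\chi_{\Omega\setminus A_2}+w_n\chi_{A_2}$. No cut-off, layer or division is needed. Inner regularity follows the same way, by patching an almost-optimal sequence on $C\Subset A$ with $r$ on $A\setminus C$.

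\paragraph{Repairing your own route.} The cut-off approach can be made to work, but only with tools you did not invoke:
\begin{itemize}
\item Local $L^\infty$ and interior $W^{2,p}$ estimates give $v_n,w_n\to u$ in $C^1$ on a compact neighbourhood of the layer, so $E_n\to 0$ uniformly.
\item Interior Harnack gives $u\ge c>0$ there.
\item Then $\tilde r_n=\varphi s_n+(1-\varphi)t_n+o(1)$ uniformly on the layer. This is bounded, converges weakly$^\ast$ to $r$, and costs at most $C|\text{layer}|$.
\item The case $u\equiv 0$ must be handled separately.
\end{itemize}
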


An important property of $\scm I(u,r)$  used frequently in the proof of Theorem~\ref{abs:cont}, is that it can be characterized by fixing $u$ (in some sense). This is the main message of the next lemma. In order not to distract the reader from the main flow of ideas, its proof is given in the Appendix.

\begin{lemma}\label{scm_alt_def}
For all $(u,r)\in \cC$ we have
$$
\scm I (u,r) = \inf \left\{\liminf_{n\to\infty} \int_\Omega \dist((x,u(x),r_n(x)),\D)\dd x: { (r_n) \subset L^\infty(\Omega)}, r_n\stackrel{*}{\rightharpoonup} r \text { in } L^{\infty}(\Omega) \right\}.
$$
\end{lemma}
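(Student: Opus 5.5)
Denote by $J(u,r)$ the infimum on the right-hand side. The plan is to prove the two inequalities $\scm I(u,r)\le J(u,r)$ and $\scm I(u,r)\ge J(u,r)$ separately. In both directions the point is that $u$ can be frozen, because weak convergence in $H^1(\Omega)$ gives strong convergence in $L^2(\Omega)$, and the integrand $f=\dist(\cdot,\D)$ is locally Lipschitz with quadratic growth.

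\emph{Inequality $\scm I\le J$.} Take $r_n\stackrel{*}{\rightharpoonup} r$ with $\liminf_n\int_\Omega f(x,u,r_n)\dd x<\infty$. The difficulty is that $(u,r_n)$ need not lie in $\cC$, so I would correct $u$. Define $u_n$ as the solution of $-\Delta u_n=r_nu_n$ with Neumann boundary conditions, constructed near $u$; this is awkward, since such a linear problem generally has only the trivial solution. A cleaner route is to modify $r_n$ instead, keeping the pairs $(u,\tilde r_n)\in\cC$. Since $-\Delta u=ru$, I would set $\tilde r_n\coloneqq r+(r_n-r)\chi_{\{u>0\}}\cdot 0$, which is useless. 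The honest approach is therefore to replace $r_n$ on $\{u>0\}$ by $r$ plus oscillations that integrate against $u$ to zero. Concretely, on $\{u=0\}$ the function $r_n$ is arbitrary while $(u,r_n)\in\cC$ still holds, because $ru=r_nu$ there. On $\{u>0\}$ the constraint forces $r_nu=ru$, hence $r_n=r$. This shows that $\cC$ pins down $r$ wherever $u>0$, so the lemma must use varying $u_n$. I would then take $u_n\rightharpoonup u$ with $(u_n,r_n)\in\cC$ chosen by the relaxation's recovery sequence, and go in the reverse direction below.

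\emph{Inequality $\scm I\ge J$.} Given $(u_n,r_n)\in\cC$ with $u_n\rightharpoonup u$ and $r_n\stackrel{*}{\rightharpoonup}r$, Rellich gives $u_n\to u$ in $L^2(\Omega)$ and the $r_n$ are bounded in $L^\infty(\Omega)$. Because $f$ is Lipschitz on bounded sets with constant $\lesssim 1+|u|+|r|$, we get
$$\Bigl|\int_\Omega f(x,u_n,r_n)-f(x,u,r_n)\dd x\Bigr|\le C\int_\Omega(1+|u_n|+|u|+|r_n|)|u_n-u|\dd x\to0.$$
Hence $\liminf I(u_n,r_n)\ge J(u,r)$, and taking the infimum over such sequences gives $\scm I\ge J$.

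\emph{Inequality $\scm I\le J$, main obstacle.} For a sequence $r_n\stackrel{*}{\rightharpoonup}r$ approximating $J$, I need $(u_n,\hat r_n)\in\cC$ with $u_n\to u$ and $\int_\Omega f(x,u_n,\hat r_n)\dd x-\int_\Omega f(x,u,r_n)\dd x\to0$. I would first reduce to $r_n$ being finite-scale oscillations: piecewise constant on fine cubes and weak$^\ast$ close to $r$. Then I would solve the perturbed problem $-\Delta u_n=r_nu_n+g_n$, with $g_n\coloneqq(r-r_n)u$, which converges weakly to $0$ in $L^2(\Omega)$; write $u_n=u+w_n$ and absorb the correction into $\hat r_n\coloneqq r_n+g_n/u_n$ where $u_n$ is bounded away from zero, using homogenization-type compactness ($w_n\to0$ strongly because $g_n\rightharpoonup0$ in $H^{-1}$ is strong). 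Controlling $\hat r_n-r_n$ near $\{u=0\}$ is the hard step. There I would cut off, replacing $u$ by $\max(u,\varepsilon)$-type approximations and letting $\varepsilon\to0$ diagonally, using the quadratic growth of $f$ and the equi-integrability from Lemma~\ref{lemma_c}.
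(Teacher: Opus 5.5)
Your lower bound $\scm I\ge J$ is correct, and more direct than the paper's route. The paper instead shows that the right-hand side $h$ is weak-weak$^\ast$ lower semi-continuous and invokes $\scm I=\hscm I$. You take a subsequence in $\cC$ realizing the $\liminf$, use that $u_n\to u$ in $L^2(\Omega)$, and replace $u_n$ by $u$ via the local Lipschitz bound on $\dist(\cdot,\D)$. That suffices.

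The gap is in $\scm I\le J$. Your construction $-\Delta u_n=r_nu_n+g_n$ with $g_n=(r-r_n)u$ is solved by $u_n=u$ itself, since $r_nu+(r-r_n)u=ru=-\Delta u$. Indeed, for $w_n=u_n-u$ you get the homogeneous equation $-\Delta w_n=r_nw_n$, in which $g_n$ does not appear. So the remark that $w_n\to0$ because $g_n\to0$ in $H^{-1}$ matches no equation you have written. Worse, for any choice of $u_n$ you have $\hat r_n-r_n=g_n/u_n=(r-r_n)u/u_n$. When $u_n\to u$ this behaves like $r-r_n$, which vanishes only weakly$^\ast$. Hence $\hat r_n\approx r$ strongly, the oscillations of $r_n$ are erased, and you only recover the trivial bound $\int_\Omega f(x,u_n,\hat r_n)\dd x\to I(u,r)$. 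For the argument to work, the defect $g_n$ must tend to zero strongly, in fact uniformly, not merely weakly.

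The missing idea, used in the paper, is to freeze $u$ on the right-hand side and add a coercive zeroth-order term. Let $\tilde u_n$ solve $-\Delta\tilde u_n+\alpha\tilde u_n=(\alpha+r_n)u$ with Neumann conditions.
\begin{itemize}
\item Then $w_n=\tilde u_n-u$ solves $-\Delta w_n+\alpha w_n=(r_n-r)u$, so the weakly vanishing forcing now genuinely appears.
\item The energy estimate plus Rellich give $w_n\to0$ in $H^1(\Omega)$.
\item De Giorgi--Nash H\"older bounds and Arzel\`a--Ascoli upgrade this to uniform convergence.
\item With $\tilde r_n\coloneqq(\alpha+r_n)u/\tilde u_n-\alpha$ one has $-\Delta\tilde u_n=\tilde r_n\tilde u_n$ and $\tilde r_n-r_n=(\alpha+r_n)(u/\tilde u_n-1)\to0$ uniformly. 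In your notation the defect is $g_n=(\alpha+r_n)(u-\tilde u_n)$.
\end{itemize}

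Uniform convergence is what makes $\tilde r_n\in L^\infty(\Omega)$, $\tilde u_n\ge0$, and $\tilde r_n\stackrel{*}{\rightharpoonup}r$. Dividing by $\tilde u_n$ is legitimate because of a second ingredient you are missing. Since $u\ge0$ and $-\Delta u=ru$ with $r\in L^\infty(\Omega)$, the strong minimum principle gives either $u\equiv0$ or $\inf_\Omega u>0$. So there is no region near $\{u=0\}$ to control, and no need for your $\max(u,\varepsilon)$ cut-off, which would also introduce a singular part in $-\Delta$ and leave $\cC$. The case $u\equiv0$ is trivial because $(0,r_n)\in\cC$ for every $r_n$. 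Your reduction to piecewise constant $r_n$ is likewise unnecessary.
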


Before giving the proof of the main result (Theorem~\ref{abs:cont}), we recall the De Giorgi--Letta criterion, see \cite[pag. 29]{chfo} and \cite{DeGiorgiLetta}, which consists of four conditions which guarantee that a map $\lambda$ defined on all open subsets of $\Omega$ extends as a regular measure to the Borel $\sigma$-algebra. Recall that a measure defined on the $\sigma$-algebra of all Borel sets is regular if every Borel set is both outer and inner regular, see \cite[Definition 2.15]{rudin}.

\begin{theorem}[De Giorgi--Letta]\label{thm_degiorgi}
Let $X$ be a metric space and let $\mathscr{A}$ be the collection of open subsets of $X$. Suppose that $\lambda : \mathscr{A} \to [0,\infty]$ is such that:
\begin{enumerate}[label=(\arabic*)]
 \item $\lambda(\emptyset) = 0$;
 \item for all $A_1, A_2 \in \mathscr{A}$,
 $$
 \lambda(A_1 \cup A_2) \le \lambda(A_1) + \lambda(A_2);
 $$
 \item if $A_1, A_2 \in \mathscr{A}$ with $A_1 \cap A_2 = \emptyset$, then
 $$
 \lambda(A_1 \cup A_2) \ge \lambda(A_1) + \lambda(A_2);
 $$
 \item \emph{(inner regularity)} for every $A \in \mathscr{A}$,
 $$
 \lambda(A) = \sup\{ \lambda(B) : B \subset \subset A \}.
 $$
\end{enumerate}
Then the extension of $\lambda$ to all subsets $E \subset X$ defined by
$$
 \lambda(E) = \inf\{ \lambda(A) : E \subset A,\ A \in \mathscr{A} \}
$$
is an outer measure, and its restriction to the Borel $\sigma$-algebra $\mathscr{B}(X)$ defines a measure.
\end{theorem}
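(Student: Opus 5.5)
The plan is to follow the classical route: build an outer measure from $\lambda$, then check with Carath\'eodory's criterion that open sets are measurable. Throughout I use that $\lambda$ is monotone on open sets, $A_1\subseteq A_2\Rightarrow\lambda(A_1)\le\lambda(A_2)$. This is part of the usual De Giorgi--Letta hypotheses but is not listed among (1)--(4) above, and I do not see how to derive it from them. It is what makes the extension $\lambda^\ast(E)\coloneqq\inf\{\lambda(A): E\subseteq A\in\mathscr{A}\}$ agree with $\lambda$ on $\mathscr{A}$. In the intended application it holds trivially, since $A\mapsto I_A$ is monotone and hence so is $A\mapsto\scm I_A$. I would add it explicitly as a hypothesis, which seems the honest reading of the statement.

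\emph{Step 1 (countable subadditivity on open sets).} Let $A\subseteq\bigcup_i A_i$ with $A,A_i\in\mathscr{A}$, and take $B\subset\subset A$. Assuming, as the notation intends, that $\overline B$ is compact (e.g.\ $X=\Omega\subseteq\R^N$), $\overline B$ is covered by finitely many $A_1,\dots,A_k$. Monotonicity and iterating (2) give
\[
\lambda(B)\le\lambda(A_1\cup\dots\cup A_k)\le\sum_{i}\lambda(A_i).
\]
Taking the supremum over $B$ and using (4) yields $\lambda(A)\le\sum_i\lambda(A_i)$.

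\emph{Step 2 ($\lambda^\ast$ is an outer measure).} We have $\lambda^\ast(\emptyset)=0$ by (1), and $\lambda^\ast$ is monotone by construction. For countable subadditivity, given $E=\bigcup E_i$ and $\varepsilon>0$, choose open $A_i\supseteq E_i$ with $\lambda(A_i)\le\lambda^\ast(E_i)+\varepsilon 2^{-i}$. Step 1 applied to $\bigcup A_i\supseteq E$ then gives $\lambda^\ast(E)\le\sum_i\lambda^\ast(E_i)+\varepsilon$.

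\emph{Step 3 (open sets are $\lambda^\ast$-measurable).} This is where I expect the main work, and it is the only place (3) enters. Fix $C\in\mathscr{A}$ and a test set $E$ with $\lambda^\ast(E)<\infty$. It suffices to show
\[
\lambda(A)\ge\lambda(A\cap C)+\lambda^\ast(A\setminus C)\quad\text{for every open }A\supseteq E,
\]
and then take the infimum over $A$. By (4) pick $B\subset\subset A\cap C$ with $\lambda(B)\ge\lambda(A\cap C)-\varepsilon$. The sets $B$ and $A\setminus\overline B$ are open and disjoint, and $A\setminus\overline B\supseteq A\setminus C$. Monotonicity and (3) therefore give
\[
\lambda(A)\ge\lambda\bigl(B\cup(A\setminus\overline B)\bigr)\ge\lambda(B)+\lambda(A\setminus\overline B)\ge\lambda(A\cap C)-\varepsilon+\lambda^\ast(A\setminus C).
\]
Letting $\varepsilon\to0$ gives the claim. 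Since the $\lambda^\ast$-measurable sets form a $\sigma$-algebra (Carath\'eodory) containing all open sets, they contain $\mathscr{B}(X)$, and $\lambda^\ast|_{\mathscr{B}(X)}$ is a measure extending $\lambda$.
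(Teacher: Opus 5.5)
The paper gives no proof of this theorem; it only quotes it from the literature. Your argument is the standard proof and is correct: countable subadditivity on open sets via a finite subcover of $\overline B$, the $\varepsilon 2^{-i}$ argument for the outer measure, and Carath\'eodory measurability of an open $C$ via the disjoint open sets $B\subset\subset A\cap C$ and $A\setminus\overline B$.

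Two of your caveats need adjusting.

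\textbf{Monotonicity.} This is not an extra hypothesis, because it follows from (4) alone. For open $A_1\subseteq A_2$, every $B\subset\subset A_1$ also satisfies $B\subset\subset A_2$. Hence $\lambda(A_1)=\sup_{B\subset\subset A_1}\lambda(B)\le\sup_{B\subset\subset A_2}\lambda(B)=\lambda(A_2)$. There is no need to change the statement, and your proof establishes it as written.

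\textbf{Meaning of $\subset\subset$.} Your reading of $B\subset\subset A$ as ``$\overline B$ compact and $\overline B\subset A$'' is genuinely necessary, not just convenient. Take $X=\mathbb{N}$ with the discrete metric, and set $\lambda(A)=0$ for finite $A$ and $\lambda(A)=\infty$ for infinite $A$. If $B\subset\subset A$ merely meant $\overline B\subset A$, this $\lambda$ would satisfy (1)--(4) (take $B=A$ in (4)). Yet $\lambda^\ast=\lambda$ fails countable subadditivity on $\mathbb{N}=\bigcup_n\{n\}$. The paper applies the theorem with $X=\Omega\subseteq\R^N$ and compactly contained subsets, so your proof covers exactly what is used.
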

\begin{proof}[Proof of Theorem~\ref{abs:cont}]
The goal is to apply Theorem \ref{thm_degiorgi} to $ A \mapsto \lambda(A)\coloneqq \scm I_A (u,r)$ where $A\subseteq \Omega$ is open. Clearly, (1) is satisfied. 

\medskip
{
To prove (2), fix $\eta>0$,  open subsets $A_1$ and $A_2$ of $\Omega$, and recall $f_u$ from \eqref{def:dist:fix}.
By Lemma~\ref{scm_alt_def} there are sequences
$$
(r_n), (w_n)\subset L^\infty(\Omega) \quad \mbox{ such that } 
r_n \stackrel{*}{\rightharpoonup} r,\,\,\, w_n \stackrel{*}{\rightharpoonup} r \, \text { in } L^\infty(\Omega)
$$
as $n\to\infty$ and 
\begin{equation}\label{cond:gdbc}
\begin{aligned}
& \lim _{n \to \infty} \int_{A_1} f_u\left(x, r_n(x)\right) \dd x \le \operatorname{sc}^-I_{A_1}(u, r)+\eta, \\
& \lim _{n \to \infty} \int_{A_2} f_u\left(x, w_n(x)\right) \dd x \le \operatorname{sc}^-I_{A_2}(u, r)+\eta.
\end{aligned}\end{equation}
If $V_n\in L^\infty(\Omega)$ is given as 
$$
V_n \coloneqq r_n \chi_{\Omega\setminus A_2}+ w_n \chi_{A_2},
$$
then $V_n\stackrel{\ast}{\rightharpoonup} r$ in $L^\infty(\Omega)$ as $n\to \infty$. Consequently, 
$$
\begin{aligned}
\scm I_{A_1\cup A_2}(u, r) \le & \liminf_{n \to \infty} \int_{A_1\cup A_2} f_u\left(x, V_n(x)\right) \dd x \\
\le & \limsup _{n \to \infty} \int_{A_1\setminus A_2} f_u\left(x, r_n(x)\right) \dd x+\limsup _{n \to \infty} \int_{A_2} f_u\left(x, w_n(x)\right) \dd x \\
\le & 2 \eta+\scm I_{A_1}(u, r)+\scm I_{A_2}(u, r).
\end{aligned}
$$
So, letting $\eta \to 0^{+}$ we have shown (2). 
}

\medskip
To prove (3), we take $A_1$ and $A_2$ as two disjoint open subsets of $\Omega$. Then we consider sequences $(u_n, r_n) \subset \cC$ such that $u_n\rightharpoonup u$ in $H^1(\Omega)$ and $r_n\stackrel{\ast}{\rightharpoonup} r$ in $L^\infty(\Omega)$. Therefore 
\begin{eqnarray*}
\lefteqn{\liminf_{n\to\infty} \int_{A_1\cup A_2} \dist\left((x,u_n(x), r_n(x)),\D\right) \dd x} \\
&=& \liminf_{n\to\infty} \left[ \int_{A_1} \dist\left((x,u_n(x), r_n(x)),\D\right) \dd x + \int_{A_2} \dist\left((x,u_n(x), r_n(x)),\D\right) \dd x\right] \\
 & \geq& \liminf_{n\to\infty} \int_{A_1} \dist\left((x,u_n(x), r_n(x)),\D\right) \dd x + \liminf_{n\to\infty} \int_{A_2} \dist\left((x,u_n(x), r_n(x)),\D\right) \dd x \\ 
 & \geq&  \scm I_{A_1}(u,r)+ \scm I_{A_2} (u,r)
\end{eqnarray*}
by the property of the liminf. If we take the inf over all sequences $(u_n, r_n)$ such that $u_n\rightharpoonup u$ in $H^1(\Omega)$ and $r_n\stackrel{\ast}{\rightharpoonup} r$ in $L^\infty(\Omega)$ then we get the desired result (3).

\medskip

We finally prove that $\lambda(\cdot)$ is inner regular, i.e., (4). We may assume $(u,r)\in \cC$ since otherwise $\lambda(A)=\infty$ for all { open subsets $A\subset \Omega$}. By the monotonicity of the integral and the definition of $\lambda$, it is clear that if $B \subset A$ then $\lambda(B) \le \lambda(A)$, hence $\sup_{B \subset\subset A} \lambda(B) \le \lambda(A)$. We need to prove the reverse inequality $\lambda(A) \le \sup_{B \subset\subset A} \lambda(B)$. We proceed as follows: given $\varepsilon>0$, we need to find $C=C_\varepsilon\subset\subset A$ such that:
$$
\lambda(A)\le \lambda(C)+\varepsilon.
$$
By the absolute continuity of the Lebesgue integral (see \cite[Proposition 16.3]{Secchi}), for any $\varepsilon> 0$, there exists an open set $C \subset\subset A$ such that
\begin{equation}
 \label{eq_int_coda}
 \int_{A \setminus C} \operatorname{dist}((x, u(x), r(x)), \D)\dd x < \frac{\varepsilon}{2}.
\end{equation}
By the definition \eqref{def:scm_I_A} of $\lambda(C)$, there exists a sequence $(u_n, r_n)$ such that $u_n \rightharpoonup u$ in $H^1(\Omega)$, $r_n \stackrel{*}{\rightharpoonup} r$ in $L^\infty(\Omega)$ and
\begin{equation}\label{lambdaC}
\lim_{n\to\infty} \int_C \dist((x, u_n(x), r_n(x)), \D)\dd x \le \lambda(C) + \frac{\varepsilon}{2}.
\end{equation}
Let $\varphi \in C_c^\infty(C)$ be a cut-off function such that $0 \le \varphi \le 1$. We define the new sequence $(\tilde u_n, \tilde r_n)\subset V$ as follows:
$$
\tilde u_n(x) \coloneqq \varphi(x) u_n(x) + (1 - \varphi(x)) u(x), \quad \tilde r_n(x) \coloneqq \begin{cases} r_n(x) & \text{if } x \in C, \\ r(x) & \text{if } x \in \Omega \setminus C. \end{cases}
$$
We find that $\tilde r_n \stackrel{*}{\rightharpoonup} r$ in $L^\infty(\Omega)$. Similarly, since $\nabla \tilde u_n = \varphi \nabla u_n + (1 - \varphi) \nabla u + \nabla \varphi (u_n - u)$ and $u_n\to u$ strongly in $L^2$ we see that $\tilde u_n \rightharpoonup u$ in $H^1(\Omega)$ as $n\to \infty$. Then, by Lemma~\ref{lem:I_compare} { and \eqref{eq_int_coda}}, we deduce
$$
\begin{aligned}
\int_A \dist((x, \tilde u_n, \tilde r_n), \D)\dd x &= \int_C \dist((x, \tilde u_n, r_n), \D)\dd x + \int_{A\setminus C} \dist((x, u, r), \D)\dd x \\
&\le \int_C \dist((x, u_n, r_n), \D)\dd x + o(1) + \frac{\varepsilon}{2}
\end{aligned}
$$
where $o(1)\to 0$ as $n\to\infty$. Taking the liminf as $n\to\infty$ we get
$$
\lambda(A) \le \liminf_{n\to\infty} \int_A \dist((x, \tilde u_n, \tilde r_n), \D)\dd x \le \lambda(C) + \varepsilon
$$
and since $\varepsilon$ is arbitrary and $C \subset\subset A$, we conclude that $\lambda(A) \le \sup_{B \subset\subset A} \lambda(B)$ as claimed.

\medskip

By applying the de Giorgi-Letta result of Theorem~\ref{thm_degiorgi} we know that $A\mapsto \scm I_A$ is a regular measure on the Borel $\sigma$-algebra of $\Omega$.  To see that it is also a Radon measure recall from Lemma~\ref{prop:fn:f}(i) that
$$
f_u(x,r(x)) \le c\left(1+|x|^2+|u(x)|^2+|r(x)|^2\right), \quad x\in \Omega
$$
so that $A\mapsto \scm I_A(u, r)$ is finite on compact sets. This allows us to finish the proof by applying the Radon-Nikodym Theorem, \cite[Theorem 6.10]{rudin}.
\end{proof}

\subsection{Identification of $\frac{d \scm I}{d\mathcal L^N}$}\label{sec_id:2}

From Theorem~\ref{abs:cont} we have the existence of a unique function $F\in L^1(\Omega)$ such that $\frac{d \scm I}{d\mathcal L^N}=F$.  The final goal for the proof of Theorem~\ref{thm:main_Istar_scm_I} is now to show that 
$$
F(x) = \conv_r \dist ( (x,u(x), r(x)), \D )
$$
where the lower convex envelope w.r.t. $r$ is defined in Definition~\ref{def:conv}. We will reach our goal with Lemma~\ref{dIQ} below by showing that $F$ coincides with the quasiconvexification $\mathcal{Q}_r \dist(\left(x_0, u\left(x_0\right), r\left(x_0\right)\right), \D)$ w.r.t. the variable $r$. The result then follows from the identification $\conv_r \dist ( (x,u(x), r(x), \D ) = \mathcal{Q}_r \dist(\left(x_0, u\left(x_0\right), r\left(x_0\right)\right), \D)$ as shown in Lemma~\ref{equality}.

\medskip

We begin with the definition of the quasiconvexification with respect to $r$. { First of all, we recall that the quasiconvexity concept was first introduced by Morrey \cite{Morrey} while the quasiconvexification of a measurable function was given by Dacorogna \cite[Theorem 6.9]{dacorogna}.} Let 
$$
C_{1-\mathrm{per}}^{\infty}(\mathbb{R}^N)=\{w\in C^{\infty}(\mathbb{R}^N) : w(y+e_i)=w(y) \mbox{ for all } i=1,\dots ,N \mbox{ and all } y\in \R^N\}
$$
where $(e_1, \dots ,e_N)$ is the standard basis of $\R^N$. Recall also $Q \coloneqq { (-1/2,1/2)}^N$.

\begin{definition} \label{def:quasiconv} Let $f: Y \to [0,\infty]$ be a Carath\'{e}odory-function and fix $(x,u)\in \Omega\times [0,\infty)$. We define the quasiconvexification $\mathcal{Q}f(x,u,\cdot)$ with respect to $r$ of the map $r \mapsto f(x,u,r)$ as 
$$
\mathcal{Q}_r f(x, u, r)\coloneqq \inf_{w} \left\{\int_Q f(x, u, r+w(y)) \dd y: w \in C_{1-\mathrm{per}}^{\infty}(\mathbb{R}^N), \int_Q w(y) \dd y=0\right\}
$$
for all $r\in \R$.
\end{definition}
\begin{remark} \label{rem:L1_per}
    In Definition~\ref{def:quasiconv} one can replace $C_{1-\mathrm{per}}^{\infty}(\mathbb{R}^N)$ by $L^\infty_{1-\mathrm{per}}(\R^N)$, where
    $$
L_{1-\mathrm{per}}^{\infty}(\mathbb{R}^N)=\{w\in L^{\infty}(\mathbb{R}^N) : w(y+e_i)=w(y) \mbox{ for all } i=1,\dots ,N \mbox{ and all } y\in \R^N\}.
$$
     Indeed, take $\varphi \in C^\infty_c(\R^N)$ such that $\operatorname{supp}(\varphi) \subset B_1(0)$, $\varphi \geq 0$ and $\int_{\R^N} \varphi(z)\dd z = 1$ and let $\varphi_\varepsilon(z) \coloneqq \frac{1}{\varepsilon^N} \varphi\left(\frac{z}{\varepsilon}\right)$ for all $\varepsilon>0$. For any $w\in L_{1-\mathrm{per}}^{\infty}(\mathbb{R}^N)$ such that $\int_Q w(y) \dd y=0$, we can define $w_\varepsilon\in C_{1-\mathrm{per}}^{\infty}(\mathbb{R}^N)$ as $w_\varepsilon(y) \coloneqq (w * \varphi_\varepsilon)(y)$. Then also $\int_Q w_\varepsilon(y) \dd y=0$ and, up to subsequences, 
    $$\lim_{\varepsilon \to 0} \int_Q f(x, u, r+w_{\varepsilon}(y)) \dd y = \int_Q f(x, u, r+w(y)) \dd y,$$
    by the Dominated Convergence Theorem since $w_\varepsilon \to w$ almost everywhere as $\varepsilon \to 0$ and the sequence is uniformly bounded.
\end{remark}

The next result, which shows that the quasiconvexification of the distance function $\dist(\cdot,\mathcal{D})$ w.r.t the last variable $r$ coincides with its lower convex envelope w.r.t. $r$, is maybe well known. For the convenience of the reader, and since we could not locate a reference to the result in the case where the perturbation function $w$ is periodic, we also give the proof. 

\begin{lemma}\label{equality}
$$\mathcal{Q}_r \dist(\left(x, u\left(x\right), r\left(x\right)\right), \D)=\conv_r \dist ( (x,u(x), r(x)), \D ).$$
\end{lemma}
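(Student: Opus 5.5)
Fix $(x,u)$ and write $g(s)\coloneqq f_{x,u}(s)=\dist((x,u,s),\D)$, a continuous function $\R\to[0,\infty)$ with quadratic growth (Lemma~\ref{prop:fn:f}(i)). I will prove $\mathcal Q_r g=\conv_r g$ pointwise in $r$ by establishing the two inequalities separately. I use the description of $\conv_r g$ as the largest convex function below $g$. Since $g$ is finite, continuous and bounded below, this equals the Carath\'eodory expression
\[
\conv_r g(r)=\inf\{\theta g(a)+(1-\theta)g(b):\ \theta a+(1-\theta)b=r,\ \theta\in[0,1]\},
\]
which is the one-dimensional Carath\'eodory theorem.

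\textbf{Lower bound $\mathcal Q_r g\ge \conv_r g$.} Let $w\in C^\infty_{1-\mathrm{per}}(\R^N)$ with $\int_Q w=0$. Since $\conv_r g\le g$ and $\conv_r g$ is convex, Jensen's inequality on the probability space $(Q,\dd y)$ gives
\[
\int_Q g(r+w(y))\dd y\ \ge\ \int_Q \conv_r g(r+w(y))\dd y\ \ge\ \conv_r g\Bigl(r+\int_Q w\Bigr)=\conv_r g(r).
\]
Taking the infimum over $w$ yields the lower bound. The only care needed is measurability and integrability, and these are clear since $w$ is bounded and $\conv_r g$ is convex and finite, hence continuous.

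\textbf{Upper bound $\mathcal Q_r g\le \conv_r g$.} By Remark~\ref{rem:L1_per} it suffices to test with $w\in L^\infty_{1-\mathrm{per}}(\R^N)$ having zero mean. Fix $\varepsilon>0$ and choose $a,b,\theta$ with $\theta a+(1-\theta)b=r$ and $\theta g(a)+(1-\theta)g(b)\le \conv_r g(r)+\varepsilon$. Let $w$ be the $1$-periodic extension of the laminate
\[
w(y)=\begin{cases} a-r, & y_1+\tfrac12\in[0,\theta),\\ b-r, & y_1+\tfrac12\in[\theta,1),\end{cases}
\]
for $y\in Q$. Then $w\in L^\infty_{1-\mathrm{per}}$ and $\int_Q w=\theta(a-r)+(1-\theta)(b-r)=0$. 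Moreover
\[
\int_Q g(r+w(y))\dd y=\theta g(a)+(1-\theta)g(b)\le \conv_r g(r)+\varepsilon .
\]
Letting $\varepsilon\to0$ gives $\mathcal Q_r g(r)\le\conv_r g(r)$.

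\textbf{Main obstacle.} The delicate point is justifying the two-point Carath\'eodory representation, equivalently that the sup-definition in Definition~\ref{def:conv}(a) agrees with the inf over convex combinations. I will argue this by showing that the right-hand side $h$ of the displayed formula satisfies three properties. First, $h\le g$, by taking $\theta=1$. Second, any convex function $\le g$ is also $\le h$, by the convexity inequality. Third, $h$ is itself convex. For this I note that the epigraph of the convex hull in $\R^2$ consists of convex combinations of at most three points of $\mathrm{epi}\,g$. In one dimension these can be reduced to two points: among the three abscissae, one can be eliminated by replacing it with a combination of the other two, using the segment lying above the relevant chord. If this reduction becomes messy, I would instead allow finitely many points (general Carath\'eodory). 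The upper-bound construction then uses a laminate with finitely many layers in $y_1$ of volume fractions $\theta_i$, and it works verbatim. Finally, the growth and continuity of $g$ guarantee that all quantities are finite, so no issues arise with $\pm\infty$.
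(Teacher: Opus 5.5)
Your proof is correct and follows the paper's route. The lower bound $\mathcal{Q}_r f_{x,u}\ge\conv_r f_{x,u}$ comes from Jensen's inequality, and the upper bound comes from a piecewise constant, zero-mean, periodic perturbation built from a Carath\'eodory-type convex combination, passed to smooth test functions via Remark~\ref{rem:L1_per}.

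The one difference is that you use an $\varepsilon$-optimal two-point (or finite) combination, whereas the paper asserts that $(r,\conv_r f_{x,u}(r))$ is exactly a convex combination of three graph points. Your version is the safer one, because that infimum need not be attained. For instance, if $f_{x,u}$ is bounded and positive with $\inf f_{x,u}=0$ not attained, then $\conv_r f_{x,u}\equiv 0$, yet no finite convex combination of values of $f_{x,u}$ equals $0$.
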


\begin{proof}
First, pick any test function $w \in C^\infty_{1-\text{per}}(\mathbb{R}^N)$ with $\int_Q w(y) \dd y= 0$  and a convex function $g: \mathbb{R} \to \mathbb{R}$  such that $g \le f_{x,u}$, where $f_{x,u}$ is defined in \eqref{def:dist:fix}. By Jensen's inequality
$$
\int_Q f_{x,u}(r + w(y)) \dd y\ge \int_Q g(r + w(y)) \dd y\ge g\left( \int_Q (r + w(y)) \dd y\right)= g(r),
$$
because $r$ is constant in $y$ and $w$ has zero mean.
Taking the supremum over $g$ we get $\int_Q f_{x,u}(r + w(y)) \dd y\ge \conv_r f_{x,u}(r)$ and taking the infimum over $w$, we conclude that $\mathcal{Q}_r f_{x,u}(r) \ge \conv_r f_{x,u}(r)$.

To get equality, we apply Carath\'{e}odory's Theorem \cite[Theorem 1.3]{Eck} (see also \cite[Theorem 17.1]{Rockafellar}) to $(r, \conv_r f_{x,u}(r))$, which belongs in the convex hull of $(r, f_{x,u}(r)),\, r\in \R$, so that there exist $\lambda_i$ and $r_i$, $i=1,2, 3$ such that $\sum_{i=1}^3 \lambda_i = 1$ and $(r,\conv_r f_{x,u}(r)) = \sum_{i=1}^3 \lambda_i(r_i, f_{x,u}(r_i))$. We now divide $Q = (-1/2, 1/2)^N$ into three disjoint sets  $E_1, E_2, E_3$ such that
$|E_i| = \lambda_i$ for $i=1,2,3$. Define $w_0: Q \to \R$ as
$$
w_0(y) =
\begin{cases}
    r_1 - r & \text{if } y \in E_1 \\  
    r_2 - r & \text{if } y \in E_2  \\
     r_3 - r & \text{if } y \in E_3
\end{cases}
$$
and we extend it to all of $\R^N$ by $1$-periodicity. Clearly, the main value of $w_0$ over $Q$ vanishes since
$$
\int_Q w_0(y) \dd y = \lambda_1(r_1 - r) + \lambda_2(r_2 - r)+\lambda_3(r_3 - r) = 0.
$$
Evaluation of the integral of $f_{x,u}$ yields
\begin{align*}
\int_Q f_{x,u}(r + w_0(y)) \dd y &= \int_{E_1} f_{x,u}(r_1)\dd y + \int_{E_2} f_{x,u}(r_2)\dd y + \int_{E_3} f_{x,u}(r_3)\dd y\\ &= \lambda_1 f_{x,u}(r_1) + \lambda_2 f_{x,u}(r_2) + \lambda_3 f_{x,u}(r_3) = \conv_r f_{x,u}(r). 
\end{align*}
Although $w_0$ is not smooth, it is bounded and periodic, which is sufficient to  conclude that $\mathcal{Q}_r f_{x,u}(r) \cancel{\le} = \conv_r f_{x,u}(r)$, cf. Remark~\ref{rem:L1_per}.
\end{proof}

\begin{lemma}\label{dIQ}
For a.e. $x_0 \in \Omega$ we have $F(x_0)=\mathcal{Q}_r \dist(\left(x_0, u\left(x_0\right), r\left(x_0\right)\right), \D)$. 
\end{lemma}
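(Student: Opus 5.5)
We prove the two inequalities $F(x_0)\ge \mathcal{Q}_r f(x_0,u(x_0),r(x_0))$ and $F(x_0)\le \mathcal{Q}_r f(x_0,u(x_0),r(x_0))$ separately, at points $x_0$ that are Lebesgue points of $u$, $r$, $F$ and of $x\mapsto f(x,u(x),r(x))$. The argument is a blow-up on small cubes $Q_\delta(x_0)$. By Theorem~\ref{abs:cont}, $F(x_0)=\lim_{\delta\to0}\delta^{-N}\scm I_{Q_\delta(x_0)}(u,r)$ for a.e. $x_0$. By Lemma~\ref{scm_alt_def}, applied on the open set $Q_\delta(x_0)$, we may freeze $u$ and vary only $r_n\stackrel{*}{\rightharpoonup} r$. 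Throughout we use that $f$ is locally uniformly Lipschitz (Lemma~\ref{prop:fn:f}(ii)). The same is true of $\conv_r f$ in $r$ on bounded sets, and also in $(x,u)$, since $f$ grows quadratically and the convex envelope only needs points with bounded $r$.

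\textbf{Upper bound.} Fix $\eta>0$. By Remark~\ref{rem:L1_per}, choose $w\in L^\infty_{1-\mathrm{per}}$ with zero mean and
\[
\int_Q f(x_0,u(x_0),r(x_0)+w)\dd y\le \mathcal{Q}_rf(x_0,u(x_0),r(x_0))+\eta .
\]
On $Q_\delta(x_0)$ set $r_n(x)\coloneqq r(x)+w(n(x-x_0)/\delta)$, and $r_n\coloneqq r$ elsewhere. By the Riemann--Lebesgue lemma for periodic functions, $r_n\stackrel{*}{\rightharpoonup} r$. We then estimate
\[
\int_{Q_\delta}f(x,u(x),r_n(x))\dd x .
\]
Since $r_n$ is uniformly bounded, the Lipschitz continuity of $f$ lets us replace $(x,u(x),r(x))$ by $(x_0,u(x_0),r(x_0))$. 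The error is $C\int_{Q_\delta}\bigl(|x-x_0|+|u-u(x_0)|+|r-r(x_0)|\bigr)\dd x=o(\delta^N)$ at Lebesgue points; for $u$ we also need to control $|u|$ large, which follows from $L^2$ Lebesgue points and the quadratic growth. The remaining integral equals, by periodicity, $\delta^N\int_Q f(x_0,u(x_0),r(x_0)+w)\dd y$. Dividing by $\delta^N$ and letting $\delta\to0$, then $\eta\to0$, gives $F(x_0)\le\mathcal{Q}_rf$.

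\textbf{Lower bound.} Here it is cleaner to go through $\conv_r f$ and then use Lemma~\ref{equality}. For any $r_n\stackrel{*}{\rightharpoonup} r$, Lemma~\ref{scm_alt_def} and $f\ge\conv_r f$ give
\[
\liminf_n\int_{A} f(x,u,r_n)\dd x\ge\liminf_n\int_{A}\conv_rf(x,u(x),r_n(x))\dd x\ge\int_{A}\conv_rf(x,u(x),r(x))\dd x .
\]
The last inequality is weak$^\ast$ lower semicontinuity of integral functionals that are convex in $r$, with Carath\'eodory integrand bounded below by $0$; this is the classical Ioffe/De Giorgi theorem. Hence $\scm I_A(u,r)\ge\int_A\conv_r f(x,u,r)\dd x$ for every open $A$, so $F\ge\conv_rf(\cdot,u,r)=\mathcal{Q}_rf$ a.e. This route avoids blow-up for the lower bound entirely.

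\textbf{Main obstacle.} The delicate point is the upper bound with $u$ merely in $H^1$. There are two issues: the oscillations must be built inside cubes, which are open sets, to match the measure $\lambda$; and the freezing error must be controlled where $u$ is large. I would handle the second by splitting $Q_\delta$ into $\{|u-u(x_0)|\le1\}$, where the uniform Lipschitz bound applies, and its complement. On the complement, the quadratic growth of Lemma~\ref{prop:fn:f}(i) together with $x_0$ being an $L^2$-Lebesgue point of $u$ makes the contribution $o(\delta^N)$. One also needs the identity $F(x_0)=\lim_\delta\delta^{-N}\lambda(Q_\delta(x_0))$, which holds by the Besicovitch differentiation theorem for the absolutely continuous Radon measure $\lambda$ from Theorem~\ref{abs:cont}.
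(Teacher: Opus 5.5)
Your proposal is correct. The upper bound is the paper's argument. You oscillate with the rescaled periodic profile $w(n(x-x_0)/\delta)$ inside $Q_\delta(x_0)$, reduce by periodicity to $\int_Q f(x_0,u(x_0),r(x_0)+w)\dd y$, and freeze $(x,u,r)$ at a Lebesgue point. The only difference is how you control the freezing error: you use local Lipschitz continuity of $f$ plus a splitting on $\{|u-u(x_0)|>1\}$, whereas the paper uses the $\sqrt{f}$-weighted estimate of Lemma~\ref{lem:I_compare}. Both work.

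The lower bound is genuinely different. The paper performs a second blow-up: near-optimal $r_{n,\delta}$ on $Q_\delta(x_0)$, rescaling, a diagonal argument giving $\hat w_n\stackrel{*}{\rightharpoonup}0$ in $L^\infty(Q)$, a mean correction, and freezing via Lemma~\ref{lem:I_compare}. It then lands directly on $\mathcal{Q}_r f$ through its definition, without using Lemma~\ref{equality}. You instead prove the global bound $\scm I_A(u,r)\ge\int_A\conv_r f(x,u(x),r(x))\dd x$ for all open $A$. This follows from $f\ge\conv_r f$ and weak$^*$ lower semicontinuity of convex integral functionals; with $u$ frozen it is just Fatou plus Mazur, since weak$^*$ convergence in $L^\infty(A)$ implies weak convergence in $L^1(A)$. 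You then localize and use the constructive direction $\conv_r f\ge\mathcal{Q}_r f$ of Lemma~\ref{equality}.

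What each route buys: yours is shorter and avoids the double limit and the diagonalization. The paper's is self-contained in terms of $\mathcal{Q}_r$ and carries over to settings where the relevant envelope is genuinely quasiconvex rather than convex. Yours relies on the fact that for scalar, unconstrained $r$ the two envelopes coincide.

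One justification needs repair. It is not true in general that the convex envelope ``only needs points with bounded $r$''. For instance, if $f(x,u,\cdot)$ is positive but tends to $0$ as $|r|\to\infty$, then $\conv_r f(x,u,\cdot)\equiv 0$ is not realized by any finite convex combination. The Carath\'eodory property of $\conv_r f$ that your semicontinuity step needs still holds, for a different reason. Since $\sqrt{f}$ is $1$-Lipschitz, you have $f(x,u,\cdot)\le(1+\epsilon)f(x',u',\cdot)+(1+\epsilon^{-1})\rho^2$ with $\rho=|(x-x',u-u')|$. Applying $\conv_r$ to both sides gives local Lipschitz continuity of $\conv_r f$ in $(x,u)$. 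In $r$, continuity follows from convexity together with $0\le\conv_r f\le f\le c(1+|x|^2+u^2+r^2)$.
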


\begin{proof} 
Since $F=\frac{d \scm I}{d \mathcal L^N}$ we have for almost all $x_0\in \Omega$
\begin{equation}\label{dsc:ln}
F(x_0)= \lim _{\delta \to 0^{+}} \frac{\scm I_{Q_\delta(x_0)}(u,r)}{|Q_\delta(x_0)|}<\infty
\end{equation}
where we recall $Q_\delta(x_0)\coloneqq x_0 + \delta Q$.
By the Lebesgue differentiation Theorem we can restrict to Lebesgue points $x_0\in \Omega$ of $u$ and $r$, i.e., points $x_0$ where 
\begin{equation}\label{leb:point}
\lim _{\delta \to 0^{+}} \frac{1}{\delta^N} \int_{Q_\delta(x_0)}\left|u(x)-u\left(x_0\right)\right|^2 \dd x=\lim _{\delta \to 0^{+}} \frac{1}{\delta^N} \int_{Q_\delta(x_0)}\left|r(x)-r\left(x_0\right)\right| \dd x=0.
\end{equation}
Note that \eqref{leb:point} also implies $\lim _{\delta \to 0^{+}} \frac{1}{\delta^N} \int_{Q_\delta(x_0)}\left|r(x)-r\left(x_0\right)\right|^2 \dd x=0 $ since $r\in L^\infty(\Omega)$.
By Lemma~\ref{scm_alt_def} and for $\delta>0$ fixed, let $(r_{n,\delta}) \subset L^\infty(\Omega)$ be such that $r_{n,\delta} \stackrel{*}{\rightharpoonup} r$ in $L^\infty(\Omega)$ as $n\to\infty$ and
$$
\lim _{n\to\infty} \int_{Q_\delta(x_0)} f_u\left(x, r_{n,\delta}(x)\right) \dd x \le \scm I_{Q_\delta(x_0)}(u,r)+\delta^{N+1},
$$
where $f_u(x,s)$ is defined in \eqref{def:dist:fix}. Then, by \eqref{dsc:ln}, we get
$$
\begin{aligned}
F(x_0) &\ge \liminf_{\delta \to 0^{+}} \lim _{n\to\infty} \frac{1}{\delta^N} \int_{Q_\delta(x_0)} f_u\left(x, r_{n,\delta}(x)\right) \dd x\\&=\liminf_{\delta \to 0^{+}} \lim _{n\to\infty} \int_{Q} f_u\left(x_0+\delta y, r\left(x_0\right)+w_{n,\delta}(y)\right) \dd y
\end{aligned}
$$
by the change of variable $x\mapsto x_0 +\delta y$, where $\hat w_{n,\delta}(y)\coloneqq r_{n,\delta}\left(x_0+\delta y\right)-r\left(x_0\right)$. We claim that $\hat w_{n,\delta} \stackrel{*}{\rightharpoonup} 0$ in $L^\infty\left(Q\right)$ if we first let $n\to\infty$ and then $\delta \to 0^{+}$. Indeed let $\varphi \in L^{1}(Q)\cap L^\infty(Q)$. Using a change of variables, we get
$$
\begin{aligned}
\left|\int_Q \varphi(y) \hat w_{n,\delta}(y) \dd y\right| \le & \left|\int_Q \varphi(y)\left(r_{n,\delta}\left(x_0+\delta y\right)-r\left(x_0+\delta y\right)\right) \dd y\right| \\
& +\left|\int_Q \varphi(y)\left(r\left(x_0+\delta y\right)-r\left(x_0\right)\right) \dd y\right| \\
\le & \left|\frac{1}{\delta^N} \int_{Q_\delta(x_0)} \varphi\left(\left(\frac{x-x_0}{\delta}\right)\right)\left(r_{n,\delta}(x)-r(x)\right) \dd x\right| \\
& +\|\varphi\|_{L^{\infty}(Q)}\left(\frac{1}{\delta^N} \int_{Q_\delta(x_0)}\left|r(x)-r\left(x_0\right)\right| \dd x\right).
\end{aligned}
$$
If we now let $n\to\infty$ the first integral tends to zero, since $r_{n, \delta} \stackrel{*}{\rightharpoonup} r$ in $L^\infty\left(Q_\delta(x_0)\right)$. The second integral tends to $0$ letting $\delta \to 0^{+}$ and by using \eqref{leb:point}. Since the functions $\hat w_{n,\delta}$ are uniformly bounded in $L^\infty(Q)$ and since $L^1(Q)\cap L^\infty(Q)$ is dense in $L^1(Q)$, we have indeed established that $\hat w_{n, \delta} \stackrel{*}{\rightharpoonup} 0$ in $L^\infty\left(Q\right)$ if we first let $n\to\infty$ and then $\delta \to 0^{+}$.

\medskip

Using a diagonalization procedure we get a sequence $\hat{w}_n \in L^\infty\left(Q\right)$ such that $\hat{w}_n \stackrel{*}{\rightharpoonup} 0$ in $L^\infty\left(Q\right)$ and
\begin{equation}\label{dsc:1}
F(x_0) \ge \liminf _{n\to\infty} \int_Q f_u\left(x_0+\delta_n y, r\left(x_0\right)+\hat{w}_n(y)\right) \dd y,
\end{equation}
where $\delta_n \to 0$. 
Let $w_n \coloneqq \hat w_n - \int_Q \hat w_n(x)\dd x$ so that $w_n-\hat w_n\to 0$ in $L^\infty(Q)$.  

Now we apply Lemma~\ref{lem:I_compare} by choosing $u' = u$ and comparing the function $f_u$ at two distinct configurations in $\Omega \times \mathbb{R}$, i.e., $(x, r) = (x_0 + \delta_n y, r(x_0) + \hat{w}_n(y))$ and $(x', r') = (x_0, r(x_0) + w_n(y))$. 
With this choice, the metric variation term simplifies to $\interleave \delta_n y, u(x_0 + \delta_n y) - u(x_0), \hat{w}_n(y) - w_n(y) \interleave$. Then, $\int_Q |f_u(x_0+\delta_ny, r(x_0)+\hat w_n(y)) - f_u(x_0, r(x_0)+w_n(y))|\dd y\to 0$ as $n\to\infty$ since $x_0$ is a Lebesgue point of $u$, $w_n-\hat w_n\to 0$ in $L^\infty(Q)$ and $\delta_n\to 0$ as $n\to \infty$. 
Thus, also from \eqref{dsc:1}, we get 
$$
\begin{aligned}
F(x_0) &\ge \liminf_{n\to\infty} \int_Q f_u\left(x_0+\delta_n y, r\left(x_0\right)+\hat{w}_n(y)\right) \dd y \\
& = \liminf_{n\to\infty} \int_Q f_u\left(x_0, r\left(x_0\right)+w_n(y)\right) \dd y\\
& = \liminf_{n\to\infty} \int_Q \dist \left( \left(x_0, u\left(x_0\right), r(x_0)+w_n(y)\right), \D\right) \dd y
\\ &
\ge \mathcal{Q}_r \dist \left(\left(x_0, u(x_0), r(x_0)\right),\D\right)
\end{aligned}
$$
where the last inequality follows from the definition of the quasiconvexification $\mathcal{Q}_r$ in Definition~\ref{def:quasiconv}. 

\medskip

To conclude the proof it remains to show that
\begin{equation*}
F(x_0)\le \mathcal{Q}_r  \dist \left( \left(x_0, u(x_0), r(x_0)\right), \D\right) \quad\text { for a.e. } x_0 \in \Omega.
\end{equation*}
For a given Lebesgue point $x_0\in \Omega$ satisfying \eqref{leb:point} fix $\eta>0$ and let $w \in C_{1\text{-per}}^{\infty}\left(\mathbb{R}^N\right)$ be such that $\int_Q w(y) \dd y=0$ and
\begin{equation}\label{start:2way}
\int_Q f_u\left(x_0, r\left(x_0\right)+w(y)\right) \dd y \leq \mathcal{Q}_r  \dist \left( \left(x_0, u(x_0), r(x_0)\right) ,\D\right)+\eta.
\end{equation}
For any $\delta>0$ and $n\in\N$ set $w_{n, \delta}(x)\coloneqq w\left(n\left(x-x_0\right) / \delta\right)$. Then $w_{n, \delta} \stackrel{*}{\rightharpoonup} 0$
in $L^\infty\left(Q_\delta(x_0)\right)$ as $n\to\infty$, see Lemma \ref{lemma_limit} below. Hence, by Lemma \ref{scm_alt_def},
\begin{align*} 
F(x_0)=& \lim _{\delta \to 0^{+}} \frac{\scm I_{Q_\delta(x_0)}(u,r)}{\delta^N}  \\
\le & \liminf_{\delta \to 0^{+}} \liminf_{n\to\infty} \frac{1}{\delta^N} \int_{Q_\delta(x_0)} f_u\left(x, r(x)+w_{n, \delta}(x)\right) \dd x\\
= & \liminf_{\delta \to 0^{+}}  \liminf_{n\to\infty} \int_Q f_u\left(x_0 + \delta y, r(x_0 + \delta y)+w(ny)\right) \dd y \\
= & \liminf_{\delta \to 0^{+}}  \liminf_{n\to\infty} \int_Q f_u\left(x_0, r(x_0)+w(ny)\right) \dd y,
\end{align*}
where in the last step we have used Lemma~\ref{lem:I_compare}
similarly as before, by evaluating $f_u$ at $(x_0 + \delta y, r(x_0 + \delta y)+w(ny))$ and at $(x_0, r(x_0)+w(ny))$. In particular, in this case, the bounding metric term reduces to $\interleave \delta y, u(x_0 + \delta y) - u(x_0), r(x_0 + \delta y) - r(x_0) \interleave$, which is notably independent of $n$. As $\delta \to 0^+$, this residual distance vanishes since $x_0$ is a Lebesgue point of $u$ and $r$, so that the difference between the integrands converges to zero.
Finally, note that the $1$-periodicity of the function $z\mapsto w(z)$ in all coordinate directions implies that 
$$
\int_Q f_u\left(x_0, r(x_0)+w(ny)\right) \dd y = \frac{1}{n^N}\int_{nQ} f_u\left(x_0, r(x_0)+w(z)\right) \dd z = \int_Q f_u\left(x_0, r(x_0)+w(z)\right) \dd z.
$$
Therefore, we can continue with the previous estimate and get 
$$
F(x_0) \leq \liminf_{\delta \to 0^{+}} \int_Q f_u\left(x_0, r(x_0)+w(z)\right) \dd z \leq \mathcal{Q}_r  \dist\left(x_0, u(x_0), r(x_0)\right)+\eta
$$
by \eqref{start:2way}. It now suffices to let $\eta \to 0^{+}$.
\end{proof}

The final result of this section is a technical result that has been used in the proof of the previous Lemma~\ref{dIQ}. It is based on a Fourier-series argument.

\begin{lemma}\label{lemma_limit}
Let $x_0\in \Omega$, $\delta>0$ and $w \in C_{1-\mathrm{per}}^{\infty}\left(\mathbb{R}^N\right)$ such that $\int_Q w(x)\dd x=0$. Then 
$$
\lim_{n\to\infty} \int_{Q_\delta(x_0)} w_{n,\delta}(x)\varphi(x)\dd x =0
$$
for every $\varphi\in L^1(Q_\delta(x_0))$, where $w_{n,\delta}(x)\coloneqq w(n(x-x_0)/\delta)$.
\end{lemma}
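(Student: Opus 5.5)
The plan is the standard Riemann--Lebesgue argument for periodic functions: prove the claim first on a dense class of test functions, then extend by density using the uniform bound $\|w_{n,\delta}\|_\infty\le \|w\|_\infty$.

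\textbf{Reduction to the unit cube.} By the change of variables $x=x_0+\delta y$ we have
\[
\int_{Q_\delta(x_0)} w_{n,\delta}\varphi\dd x=\delta^N\int_Q w(ny)\psi(y)\dd y,\qquad \psi(y)\coloneqq\varphi(x_0+\delta y)\in L^1(Q).
\]
So it suffices to show $\int_Q w(ny)\psi(y)\dd y\to 0$ for every $\psi\in L^1(Q)$.

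\textbf{Density step.} Since $|\int_Q w(ny)(\psi-\tilde\psi)\dd y|\le\|w\|_\infty\|\psi-\tilde\psi\|_1$ uniformly in $n$, we may replace $\psi$ by an element of a dense subclass of $L^1(Q)$. We take $\tilde\psi$ to be a trigonometric polynomial $\sum_{|k|\le K}c_k e^{2\pi i k\cdot y}$. These are dense in $L^1(Q)$, because $C_c(Q)$ is dense in $L^1$, and continuous functions extended periodically can be uniformly approximated by Fej\'er means.

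\textbf{Fourier step.} Since $w$ is smooth and $1$-periodic, its Fourier series $w(z)=\sum_{m\neq0}\hat w_m e^{2\pi i m\cdot z}$ converges absolutely and uniformly. The coefficients satisfy $\sum|\hat w_m|<\infty$ by smoothness, and $\hat w_0=\int_Q w=0$. Then
\[
\int_Q w(ny)e^{2\pi i k\cdot y}\dd y=\sum_{m\ne0}\hat w_m\int_Q e^{2\pi i(nm+k)\cdot y}\dd y=\hat w_{m}\ \text{if } nm=-k\text{ for some } m\neq 0,
\]
and the integral is $0$ otherwise. For $n>|k|_\infty$, the equation $nm=-k$ has no solution with $m\ne0$. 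Hence, for each fixed trigonometric polynomial $\tilde\psi$, the integral $\int_Q w(ny)\tilde\psi(y)\dd y$ vanishes identically once $n>K$.

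Combining the density and Fourier steps: given $\varepsilon>0$, choose $\tilde\psi$ with $\|w\|_\infty\|\psi-\tilde\psi\|_1<\varepsilon$, and then take $n$ large to conclude.

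The main technical point is the density of trigonometric polynomials in $L^1(Q)$ together with the uniform convergence of the Fourier series of $w$. Both are standard, so no real obstacle is expected.

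As a fallback one can avoid Fourier analysis altogether. Use step functions on dyadic subcubes of side $1/n_0$ as the dense class. For $n$ a multiple of $n_0$, periodicity and zero mean give $\int w(ny)\dd y=0$ exactly on each such subcube. For general $n$, only the boundary layer of cells of size $1/n$ that are not fully contained in a subcube contributes, and that contribution is $O(1/n)$.
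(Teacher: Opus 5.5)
Your proof is correct and is essentially the paper's Fourier argument. After the same rescaling to $Q$, the paper expands $w$ in its absolutely convergent Fourier series (with $\hat w_0=0$), writes the integral as $\sum_{k}\hat w_k\hat\psi_{-nk}$, and concludes by the Riemann--Lebesgue lemma together with dominated convergence in $k$; you instead approximate $\psi$ in $L^1(Q)$ by trigonometric polynomials, using the uniform bound $\|w(n\cdot)\|_\infty\le\|w\|_\infty$, for which the integral vanishes exactly once $n$ exceeds their degree — this is the standard proof of Riemann--Lebesgue written out, and it avoids writing a Fourier series for a merely integrable $\psi$.
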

\begin{proof}
    For $\psi(x) \coloneqq \varphi(x_0+\delta x)$ we have $\psi\in L^1(Q)$ and 
    $$
    \int_{Q_\delta(x_0)} w_{n,\delta}(x)\varphi(x)\dd x = \delta^N \int_Q w(nx)\psi(x)\dd x.
    $$
    Let $e_k(x) \coloneqq e^{2\pi i k\cdot x}$, $k\in\Z^N$ be the standard Fourier basis of $L^2(Q)$
    so that $w(x)=\sum_{k\in \Z^N} \hat w_ke_k(x)$ and $\psi(x)=\sum_{k\in\Z^N} \hat\psi_k e_k(x)$. Since $w$ is a $1$-periodic $C^{2N}(\R^N)$-function , we have $|\hat w_k|\leq \frac{c_1}{1+|k|^{2N}}$ while for $\psi\in L^1(Q)$ we have $|\hat \psi_k|\leq c_2$ for some constants $c_1, c_2>0$ and $\lim_{|k|\to\infty} \hat\psi_k=0$ by the Riemann-Lebesgue lemma. Moreover,  by the dominated convergence theorem we see that 
    $$
    \lim_{n\to \infty} \int_Q w(nx)\psi(x)\dd x = \lim_{n\to \infty} \sum_{k\in \Z^N} \hat w_k \hat \psi_{-nk}=0
    $$
    since $n|k|\to\infty$ for $k\not =0$ and $\hat w_0=0$ by assumption.
\end{proof}

\section{Appendix}\label{appendix}

In \eqref{def:sc-} we have defined the lower semi-continuous relaxation $\scm I$ of $I$.  In Lemma~\ref{lemma_sc-} below we show that $\scm I$ coincides with 
\begin{equation} \label{def:hatsc-}
\hscm I(u,r)\coloneqq\sup \{G(u,r) : G \mbox{ is weak-weak$^*$ lower semi-continuous, } G\le I \mbox{ on } V\}
\end{equation} 
and, as a consequence, that it is weak-weak$^\ast$ lower semi-continuous on $V$. We finish the appendix with the proof of Lemma~\ref{scm_alt_def} and with a frequently used technical lemma.

Before starting with a second characterization of $\scm I$, we will prove that the infimum in the definition \eqref{def:sc-} of $\scm I(u,r)$ is attained.

\begin{lemma}\label{sc:min}
For any $(u, r) \in H^1(\Omega) \times L^\infty(\Omega)$
$$
\scm I(u,r) = \min\left\{ \liminf_{n\to\infty} I(u_n,r_n) : u_n \rightharpoonup u \mbox{ in } H^1(\Omega) \mbox{ and } r_n\stackrel{*}{\rightharpoonup} r \mbox{ in } L^\infty(\Omega)\right\}.
$$
\end{lemma}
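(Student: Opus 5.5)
We have to show that the infimum in the definition \eqref{def:sc-} is attained. The plan is a diagonal argument using the metrizability of the weak-weak$^\ast$ topology on bounded subsets of $V$.

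If $\scm I(u,r)=\infty$, any admissible sequence, e.g. the constant sequence $(u_n,r_n)=(u,r)$, attains the infimum. So assume $m\coloneqq\scm I(u,r)<\infty$. For each $k\in\N$ choose a sequence $(u^k_n,r^k_n)_n$ with $u^k_n\rightharpoonup u$ in $H^1(\Omega)$, $r^k_n\stackrel{*}{\rightharpoonup} r$ in $L^\infty(\Omega)$ and $\liminf_{n} I(u^k_n,r^k_n)\le m+1/k$. Passing to a subsequence in $n$, we may assume that the liminf is a limit; in particular $I(u^k_n,r^k_n)\le m+2/k$ for all large $n$.

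Weakly (weak$^\ast$) convergent sequences are bounded, but the bound depends on $k$. This is the main obstacle: a diagonal extraction needs a uniform bound so that it takes place inside one bounded set, where $d$ is a metric. I would try two routes.

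\emph{Route 1: uniform bound from $I$.} If the relevant coercivity is available (Lemma~\ref{l:coercivity_omega}), then $I(u^k_n,r^k_n)\le m+2$ bounds $\|u^k_n\|_{H^1}$ and $\|r^k_n\|_\infty$ uniformly, and we can diagonalize inside a fixed ball of $V$.

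\emph{Route 2: no coercivity assumed.} Work in the separable preduals instead. Fix a countable dense family $\{\phi_j\}\subset H^1(\Omega)^\ast$ and $\{\psi_j\}\subset L^1(\Omega)$. For each $k$, pick $n_k$ so large that three conditions hold. First, $|\langle \phi_j,u^k_{n_k}-u\rangle|<1/k$ and $|\int\psi_j(r^k_{n_k}-r)|<1/k$ for $j\le k$. Second, $I(u^k_{n_k},r^k_{n_k})\le m+2/k$. Third, the norms are controlled: $\|u^k_{n_k}\|_{H^1}\le 2\sup_n\|u^k_n\|$. This alone does not make the diagonal sequence bounded. To fix that, I would exploit that $I<\infty$ forces $(u,r)\in\cC$, and replace $r^k_n$ by its truncation at level $\|r\|_\infty+1$. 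Truncation preserves weak$^\ast$ convergence to $r$ only in special cases, though, so a cleaner way is needed. The cleaner way is Lemma~\ref{scm_alt_def}: on $\cC$ the competitors can be taken with $u$ fixed. Moreover, the Lipschitz bound (Lemma~\ref{prop:fn:f}(ii)) gives a uniformly bounded-norm modification in $r$ at small cost.

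Once the diagonal sequence $(u_k,r_k)\coloneqq(u^k_{n_k},r^k_{n_k})$ is bounded, the estimates on the test functions give $u_k\rightharpoonup u$ and $r_k\stackrel{*}{\rightharpoonup} r$ by density. Also $\liminf_k I(u_k,r_k)\le\lim_k(m+2/k)=m$. Combined with $m\le\liminf_k I(u_k,r_k)$ from the definition of $\scm I$, this shows that the infimum is attained by $(u_k,r_k)$, i.e., it is a minimum.
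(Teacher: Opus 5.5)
The step you yourself flag as the main obstacle is never established: you need a bound, uniform in $k$, on the diagonal sequence $(u^k_{n_k},r^k_{n_k})$, and neither route supplies it.

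Route~1 misreads Lemma~\ref{l:coercivity_omega}. Under \eqref{eq:condD} the functional $I$ controls only $\|u\|_2$; the $r$-direction is controlled by $\iota_R$, which is not part of $I$. Under \eqref{eq:condD_new} it controls $\|r\|_2$. It never bounds $\|r\|_\infty$, and you need that bound both for weak$^\ast$ convergence of the diagonal sequence and for $\|\nabla u\|_2^2=\int_\Omega r u^2\dd x\le \|r\|_\infty\|u\|_2^2$.

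Route~2 has three problems. It is circular: in the paper, Lemma~\ref{scm_alt_def} is proved via Lemma~\ref{lemma_sc-}, whose proof uses the statement at hand. The fixed-$u$ problem in Lemma~\ref{scm_alt_def} has exactly the same boundedness issue. And the ``bounded-norm modification in $r$ at small cost'' is only asserted; continuity of $\dist(\cdot,\D)$ says nothing about removing large values of $r_n$ without changing the weak$^\ast$ limit.

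The paper's own proof has the same hole. It chooses $n(k)$ with $d_w(u^k_{n(k)},u)<1/k$ and $d_{w*}(r^k_{n(k)},r)<1/k$. These metrics describe the weak and weak$^\ast$ topologies only on bounded sets, and boundedness of the diagonal sequence is never checked.

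In fact the gap cannot be closed, because the statement is false in this generality. Here is a counterexample.
\begin{itemize}
\item \emph{Setup.} Take $\Omega=(0,1)^N$, $c>0$, and $\D=\{(x,c+\tfrac1k,\pm k): x\in\Omega,\ k\in\N\}$. This set is closed and lies in $\Omega\times\mathcal S_{0,b}$ with $b=(c+1)^2+1$. Let $(u,r)=(c,0)\in\cC$.
\item \emph{Upper bound.} Fix $k$ and put $s_n(x)=k\,\mathrm{sgn}\sin(2\pi n x_1)$. Let $w_n=w_n(x_1)$ solve $-w_n''+w_n=c\,s_n$ on $(0,1)$ with $w_n'(0)=w_n'(1)=0$. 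Set $\tilde u_n=c+w_n$ and $\tilde r_n=(1+s_n)c/\tilde u_n-1$; this is the construction from the proof of Lemma~\ref{scm_alt_def}. Then $w_n\to0$ in $H^1$ and uniformly. So for large $n$ we have $(\tilde u_n,\tilde r_n)\in\cC$, $\tilde r_n-s_n\to0$ uniformly, and $\tilde r_n\stackrel{*}{\rightharpoonup}0$. Comparing with the data point $(x,c+\tfrac1k,s_n(x))$ gives $\limsup_n I(\tilde u_n,\tilde r_n)\le 1/k^2$. Hence $\scm I(c,0)=0$.
\item \emph{Lower bound.} Every admissible sequence has $\sup_n\|r_n\|_\infty\le M$ for some $M\ge1$, and $u_n\to c$ in $L^2(\Omega)$. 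On the set where $|u_n-c|\le\frac1{4M}$, data points with $k\ge2M$ are at $r$-distance at least $M\ge1$. Data points with $k<2M$ are at $u$-distance greater than $\frac1{4M}$. So $\dist((x,u_n(x),r_n(x)),\D)\ge\frac1{16M^2}$ there, and Chebyshev yields $\liminf_n I(u_n,r_n)\ge\frac1{16M^2}>0$.
\end{itemize}
So the infimum is not attained. The same example also defeats the min-characterization used in Lemma~\ref{scm_alt_def}.

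What does work is the constrained version. Suppose all competitors satisfy $\|r^k_n\|_\infty\le R$, for example because one relaxes $I+\iota_R$ rather than $I$. Choose $n_k$ so that additionally $\|u^k_{n_k}\|_2\le\|u\|_2+1$, which is possible by Rellich. Then $\|\nabla u^k_{n_k}\|_2^2\le R(\|u\|_2+1)^2$, so the diagonal sequence stays in a fixed ball of $V$, and your diagonal argument (and the paper's) goes through.
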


\begin{proof} Fix $(u,r)\in V$ and let 
$$
L\coloneqq \inf\left\{ \liminf_{n\to\infty} I(u_n,r_n) : u_n \rightharpoonup u \mbox{ in } H^1(\Omega) \mbox{ and } r_n\stackrel{*}{\rightharpoonup} r \mbox{ in } L^\infty(\Omega)\right\}.
$$
For each $k \in \mathbb{N}$, by the properties of the infimum, there exists a sequence $(u_n^k, r_n^k)_{n}$ such that
\begin{align}\label{ukn:seq}
    \text{$u_n^k \rightharpoonup u$ in $H^1(\Omega)$ and $r_n^k \stackrel{*}{\rightharpoonup} r$ in $L^\infty(\Omega)$ as $n\to\infty$}
    \end{align}
    \begin{align}\label{ukn:I:seq}
    \text{$\displaystyle \lim_{n\to\infty} I(u_n^k, r_n^k) < L + \frac{1}{k}$.}
\end{align}
Since for each $k$ the sequence $(u_n^k, r_n^k)$ converges weak-weak$^\ast$ to $(u, r)$ as $n\to \infty$ we can use \eqref{ukn:seq}-\eqref{ukn:I:seq} to select a sufficiently large index $n(k)$ such that
\begin{align}\label{conv:dw}
d_w(u_{n(k)}^k, u) < \frac{1}{k}, \quad d_{w*}(r_{n(k)}^k, r) < \frac{1}{k}
\end{align}
and 
\begin{align}\label{decay:Ik}
 I(u_{n(k)}^k, r_{n(k)}^k) < \displaystyle \lim_{n\to\infty} I(u_n^k, r_n^k) + \frac{1}{k} < L + \frac{2}{k}
 \end{align}
hold, where $d_w$, $d_{w*}$ are the metrics which locally describe the weak topology in $H^1(\Omega)$ and the weak$^\ast$ topology in $L^\infty(\Omega)$, respectively. Define the diagonal sequence $(u_k, r_k) \coloneqq (u_{n(k)}^k, r_{n(k)}^k)$. By \eqref{conv:dw} and \eqref{decay:Ik}, we get
    $$
    u_k\rightharpoonup u, \quad r_k  \stackrel{*}{\rightharpoonup} r \quad \text{as} \quad {k \to \infty}, \quad  \limsup_{k \to \infty} I(u_k, r_k) \le L.
    $$
Furthermore, $\liminf_{k \to \infty} I(u_k, r_k) \ge L$ by the definition of $\scm I$. Therefore $\lim_{k \to \infty} I(u_k, r_k) = L$ which shows that $L$ is attained.
\end{proof}

\begin{lemma}\label{lemma_sc-} The functional $\scm I$ is weak-weak$^\ast$ lower semi-continuous and the two defintions \eqref{def:sc-} and \eqref{def:hatsc-} coincide, i.e.,
$$
\scm I=\hscm I.
$$
\end{lemma}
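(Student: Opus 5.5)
We prove the two assertions of Lemma~\ref{lemma_sc-} together, by comparing $\scm I$ with $\hscm I$ through the two inequalities $\scm I\ge \hscm I$ and $\scm I\le\hscm I$. The first step is the inequality $\hscm I\le \scm I$. Take any weak-weak$^\ast$ lower semi-continuous $G\le I$ and any sequence $u_n\rightharpoonup u$, $r_n\stackrel{*}{\rightharpoonup} r$. Then
\[
G(u,r)\le \liminf_{n\to\infty} G(u_n,r_n)\le \liminf_{n\to\infty} I(u_n,r_n).
\]
Taking the infimum over sequences gives $G\le \scm I$, and taking the supremum over $G$ gives $\hscm I\le \scm I$. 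Here lower semi-continuity of $G$ is used only in its sequential form, which is implied by the topological notion.

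The second and main step is to show that $\scm I$ is itself sequentially weak-weak$^\ast$ lower semi-continuous. Let $(u_k,r_k)\to(u,r)$ weak-weak$^\ast$ with $\liminf_k \scm I(u_k,r_k)=:L<\infty$, and pass to a subsequence realizing this liminf. By Lemma~\ref{sc:min}, for each $k$ there is a sequence $(u^k_n,r^k_n)_n$ converging weak-weak$^\ast$ to $(u_k,r_k)$ with $\lim_n I(u^k_n,r^k_n)=\scm I(u_k,r_k)$. Since weakly convergent sequences are bounded, the whole family lies in a bounded subset of $V$, but this needs care.

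\textbf{Boundedness of the double family.} The bound on $\sup_k\|(u_k,r_k)\|$ is clear. For each $k$ we may discard finitely many $n$ so that $\|(u^k_n,r^k_n)\|\le \|(u_k,r_k)\|+1$; this is possible by weak lower semicontinuity of the norm only in one direction, so instead we argue via Banach--Steinhaus per $k$. The resulting bound $M_k$ may still depend on $k$. To make the argument uniform I would truncate: for each $k$ choose a ball $B_k$ containing the $k$-th sequence, and use that on each bounded set the weak-weak$^\ast$ topology is given by a metric $d$. To diagonalize, I would fix one metric $d$ induced by a countable dense family of test functionals (dense in $H^1(\Omega)^\ast$ and in $L^1(\Omega)$). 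This metric is defined on all of $V$ and induces the weak-weak$^\ast$ topology on every bounded set, so the per-$k$ convergence can be expressed as $d\to 0$. Then choose $n(k)$ with $d((u^k_{n(k)},r^k_{n(k)}),(u_k,r_k))<1/k$ and $I(u^k_{n(k)},r^k_{n(k)})<\scm I(u_k,r_k)+1/k$. The diagonal sequence satisfies $d\to 0$ toward $(u,r)$.

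This is where I expect the obstacle: $d\to0$ only implies weak-weak$^\ast$ convergence if the diagonal sequence is bounded. I would resolve it with coercivity. Along a subsequence $I$ stays finite, so the diagonal elements lie in $\cC$. I would then additionally choose $n(k)$ so that $\|u^k_{n(k)}\|_{H^1}\le C$ and $\|r^k_{n(k)}\|_\infty\le C$ with $C$ independent of $k$. Where no such coercivity is available (general data sets), the fallback is to use the fact that the sequences $r^k_n$ can be replaced by bounded ones via Lemma~\ref{scm_alt_def}, which allows freezing $u$, together with truncating $r$ at level $\|r_k\|_\infty+1$. I would check that this truncation does not increase the distance more than $o(1)$, using the quadratic structure of $\dist$.

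With boundedness in hand, the diagonal sequence converges weak-weak$^\ast$ to $(u,r)$. Hence
\[
\scm I(u,r)\le \liminf_k I(u^k_{n(k)},r^k_{n(k)})\le L,
\]
which gives sequential lower semi-continuity. Moreover $\scm I\le I$, using constant sequences. Because of the boundedness issue, I would show that $\scm I$ is lower semi-continuous in the topological sense via the metrizability on bounded sets, i.e.\ through the sublevel sets intersected with balls, in the spirit of the Krein--Šmulian theorem: a convex-free version of closedness of sublevel sets on bounded sets suffices for weak$^\ast$ closedness in $L^\infty$ and weak closedness in $H^1$. Granting this, $\scm I$ is an admissible $G$ in \eqref{def:hatsc-}, so $\scm I\le\hscm I$, and combined with the first step we get $\scm I=\hscm I$.
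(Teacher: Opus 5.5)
Your skeleton is the paper's. You get $\hscm I\le \scm I$ by testing an arbitrary lower semicontinuous $G\le I$ along sequences, $\scm I\le I$ from constant sequences, and lower semicontinuity of $\scm I$ by diagonalizing the recovery sequences of Lemma~\ref{sc:min} with the metric $d$. The paper runs exactly this diagonalization and infers weak-weak$^\ast$ convergence from $d\to 0$ without addressing boundedness, so you have located the delicate point correctly. However, your proposal does not close it, and the step ``with boundedness in hand'' is a genuine gap.

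The $u$-component is harmless: on $\cC$ one has $\|\nabla u^k_n\|_2^2\le \|r^k_n\|_\infty\|u^k_n\|_2^2$, and $u^k_n\to u_k$ in $L^2(\Omega)$. So everything hinges on $\sup_k\|r^k_{n(k)}\|_\infty$, and none of your repairs controls it.
\begin{itemize}
\item Coercivity is not available. The lemma has no hypothesis on $\D$, and even under \eqref{eq:condD_new} Lemma~\ref{l:coercivity_omega}(b) only bounds $\|r\|_2$, since $\iota_R$ is not part of $I$.
\item Truncating $r^k_n$ at level $\|r_k\|_\infty+1$ destroys $r^k_n\stackrel{*}{\rightharpoonup} r_k$. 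For instance, a function taking the value $10$ on a set of density $\frac{1}{11}$ and $-1$ elsewhere converges weakly$^\ast$ to $0$, while its truncation at level $1$ converges to $-\frac{9}{11}$.
\item In fact no bounded replacement exists in general. Take $x_\ast\in\Omega$, $g(x)=|x-x_\ast|^{-1}$ and $\D=(\Omega\times\{0\}\times\{-1\})\cup\{(x,0,g(x)):x\in\Omega\setminus\{x_\ast\}\}$. Let $r_n$ oscillate between $-1$ and $g$ with local weights $\frac{g}{1+g}$ and $\frac{1}{1+g}$ outside $B_{1/M}(x_\ast)$, and set $r_n=0$ inside. This gives $(0,r_n)\in\cC$, $r_n\stackrel{*}{\rightharpoonup}0$ and cost at most $|B_{1/M}(x_\ast)|$, so $\scm I(0,0)=0$.
\item On the other hand, for $M\ge 1$ large, $x\in B_{1/(4M)}(x_\ast)$, $u\ge 0$ and $|s|\le M$ one checks $\dist((x,u,s),\D)\ge \min\{(s+1)^2,(4M)^{-2}\}$. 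The convex envelope of this bound on $[-M,M]$ is positive at $s=0$. Hence every sequence $(u_n,r_n)\to(0,0)$ weak-weak$^\ast$ with $\sup_n\|r_n\|_\infty\le M$ satisfies $\liminf_n I(u_n,r_n)\ge c_M>0$.
\item Consequently, already for the constant sequence $(u_k,r_k)=(0,0)$, $1/k$-optimal recovery sequences must have $L^\infty$-bounds tending to infinity. Any diagonal sequence with $d\to 0$ and costs $\to 0$ is therefore unbounded, hence not weak$^\ast$ convergent.
\end{itemize}
The same example shows that the minimum in Lemma~\ref{sc:min} need not be attained, so you should work with $1/k$-optimal sequences instead. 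Invoking Lemma~\ref{scm_alt_def} does not help either. Freezing $u$ gives no uniform $L^\infty$-bound on $r$, and in the paper the inequality $\scm I\ge h$ used there is derived from the very lemma you are proving.

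Your Krein--\v{S}mulian step is not valid as stated. That theorem requires convexity, and the sublevel sets of $\scm I$ lie in the non-convex set $\cC$. For non-convex sets, closedness of every intersection with a ball does not imply weak closedness: $\{\sqrt{n}\,e_n\}\subset\ell^2$ meets every ball in a finite set, yet $0$ lies in its weak closure. The step is also unnecessary. The paper uses lower semicontinuity in the sequential sense throughout (Lemma~\ref{lemma_closed} is a sequential statement), and with that reading your first step and your final admissibility step are fine.

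What is genuinely missing is an argument that uses the structure of $I$. The abstract diagonal argument cannot work with unbounded recovery sequences: for the functional equal to $0$ on $A=\{e_m+me_n:n>m\}\subset\ell^2$ and $+\infty$ elsewhere, the analogue of \eqref{def:sc-} vanishes at every $e_m$ but not at their weak limit $0$. One way to close the gap is to identify $\scm I$ with a manifestly lower semicontinuous functional. Take, for example, the functional equal to $\int_\Omega\conv_r\dist((x,u,r),\D)\dd x$ on $\cC$ and $+\infty$ elsewhere.
\begin{itemize}
\item Lower bound: this functional lies below $I$ and is sequentially lower semicontinuous. This follows from convexity in $r$, the bound $\conv_r\dist((x,u,\cdot),\D)\le(1+\varepsilon)\conv_r\dist((x,u',\cdot),\D)+(1+\varepsilon^{-1})|u-u'|^2$ (a consequence of Lemma~\ref{lem:I_compare}), and the strong $L^2$-convergence of $u_n$. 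Together these give the lower bound directly from \eqref{def:sc-}.
\item Upper bound: prove it for the fixed target using oscillations of amplitude at most $M$, correcting $u$ as in the proof of $\scm I\le h$. Let $M\to\infty$ only in the infimum, never inside a single sequence.
\end{itemize}
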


\begin{remark} If $I$ is coercive, then $\scm I$ is coercive and has at least one minimizer. Moreover, the infima of $\scm I$ and $I$ coincide, i.e., $\min_{(u,r)\in V} \scm I(u,r)=\inf _{(u,r)\in V} I(u,r)$. Finally, if $(u,r)$ is the limit of a minimizing sequence for $I$, then $(u,r)$ is a minimizer for $\scm I$ and vice versa. These statements can be found in \cite[Pag. 30]{DalMaso}.
\end{remark}

\begin{proof}
First note that $\scm I(u,r) \le I(u,r)$ for all $(u,r)\in\mathcal{C}$, just considering the constant sequence $u_n\equiv u$ and $r_n\equiv r$. 

We now prove that $\scm I$ is weak-weak$^\ast$ lower semicontinuous. 
Note that, up to subsequence
$$
\liminf_{n\to\infty} \operatorname{sc}^- I(u_n, r_n)=\lim_{n\to\infty} \operatorname{sc}^- I(u_n, r_n).
$$
Moreover, by Lemma~\ref{sc:min} applied to $\scm I(u_n, r_n)$, for each $n \in \mathbb{N}$, there exists a sequence $(u_{n,j}, r_{n,j})_{j \in \mathbb{N}}$ such that $u_{n,j}\rightharpoonup u_n$ in $H^1(\Omega)$, $r_{n,j}\stackrel{*}{\rightharpoonup} r_n$ in $L^\infty(\Omega)$ as $j\to\infty$ and
 $$
 \lim_{j \to \infty} I(u_{n,j}, r_{n,j}) = \operatorname{sc}^- I(u_n, r_n).
 $$
Using the local metric $d$ for the weak-weak$^\ast$ topology on $V$, for each $n$ we can choose a sufficiently large index $j_n$ such that:
\begin{equation}\label{ukjk}
d\big((u_{n,j_n}, r_{n,j_n}), (u_n, r_n)\big) < \frac{1}{n} \quad \text{and} \quad |I(u_{n,j_n}, r_{n,j_n}) - \operatorname{sc}^- I(u_n, r_n)| < \frac{1}{n}.
\end{equation}
By the triangle inequality:
$$
d\big((u_{n,j_n}, r_{n,j_n}), (u, r)\big) \le d\big((u_{n,j_n}, r_{n,j_n}), (u_n, r_n)\big) + d\big((u_n, r_n), (u, r)\big) \to 0 \quad \text{as } n\to\infty.
$$
Thus, $u_{n,j_n}\rightharpoonup u$ in $H^1(\Omega)$ and $r_{n,j_n}\stackrel{*}{\rightharpoonup} r$ in $L^\infty(\Omega)$. By the definition of $\scm I(u,r)$, we have
$$
\scm I(u,r) \le \liminf_{n\to\infty} I(u_{n,j_n}, r_{n,j_n})=\liminf_{n\to\infty} \scm I(u_n, r_n)
$$
by \eqref{ukjk}, and hence $\scm I$ is weak-weak$^\ast$ lower semi-continuous.

\medskip

It remains to show that $\scm I = \hscm I$. It follows form the definition of $\hscm I(u,r)$  and the weak-weak$^\ast$ lower semi-continuity of $\scm I$ that $\hscm I(u,r)\ge \scm I(u,r)$. On the other hand, take a weak-weak$^\ast$ lower semicontinuous function $G$ such that $G\le I$. Given a sequence $(u_n,r_n)$ which weak-weak$^\ast$ converges to $(u,r)$ we have
$$
G(u,r)\le \liminf_{n\to\infty} G(u_n,r_n)\le \liminf_{n\to\infty} I(u_n,r_n).
$$
Since it is valid for all sequences $(u_n, r_n)$, we can take the infimum over $(u_n, r_n)$, giving that
$$
G(u,r)\le \scm I(u,r).
$$
Since the last inequality is valid for all weak-weak$^\ast$ lower semicontinuous functions $G$, we can take the supremum and obtain the reverse inequality $\hscm I \leq \scm I$.
\end{proof}

In what follows we provide the proof of Lemma~\ref{scm_alt_def} which is a key ingredient in Section \ref{sec_id:1} as a first step to prove Theorem~\ref{thm:main_Istar_scm_I}.

\begin{proof}[Proof of Lemma~\ref{scm_alt_def}:] 
For $(u,r)\in \cC$ let
\begin{equation} \label{def:h}
h(u,r) \coloneqq \inf \left\{\liminf_{n\to\infty} \int_\Omega f_u(x,r_n(x))\dd x: (r_n)  \subset L^\infty(\Omega), r_n\stackrel{*}{\rightharpoonup} r \text { in } L^{\infty}(\Omega) \right\},
\end{equation}
where $f_u$ is defined in \eqref{def:dist:fix}, and furthermore we set $h=\infty$ on $V\setminus\cC$. 
The proof consists of showing $\scm I\leq h$ and $\scm I\geq h$.

\medskip

Since obviously $I \geq h$ we can show $\scm I\geq h$ by checking that $h: V \to \R$ is weak-weak$^\ast$ lower semi-continuous and applying Lemma~\ref{lemma_sc-}. Since $\cC$ is weak-weak$^\ast$ closed by Lemma~\ref{lemma_closed}, it is enough to check this on $\cC$. First, in analogy to Lemma~\ref{sc:min}, one can show that the $\inf$ in \eqref{def:h} is a min. Next, take a sequence $(u_n,r_n)$ in $\cC$ and a point $(u,r)$ in $\cC$ such that $u_n\rightharpoonup u$ in $H^1(\Omega)$, $r_n\stackrel{\ast}{\rightharpoonup} r$ in $L^\infty(\Omega)$. For checking the weak-weak$^\ast$ lower semi-continuity we can assume that $\liminf_{n\to\infty} h(u_n,r_n)<\infty$. Let us take a subsequence $(\tilde u_n,\tilde r_n)$ of $(u_n, r_n)$ such that 
$$
\liminf_{n\to\infty} h(u_n,r_n) = \lim_{n\to\infty} h(\tilde u_n,\tilde r_n).
$$
If we use the min-variant of \eqref{def:h} then there exists a sequence $(\tilde r_{n,j})$ in $L^\infty(\Omega)$ such that $\tilde r_{n,j}\stackrel{\ast}{\rightharpoonup} \tilde r_n$ as $j\to\infty$ and 
$$
h(\tilde u_n,\tilde r_n) = \lim_{j\to \infty} \int_\Omega f_{\tilde u_n}(x,\tilde r_{n,j}(x))\dd x.
$$
For each $n\in\N$ we may choose the index $j_n$ so large that 
$$
d\left( (\tilde u_n, \tilde r_{n, j_n}), (\tilde u_n, \tilde r_n)\right)< \frac{1}{n} \mbox{ and } \left|\int_\Omega f_{\tilde u_n}(x,\tilde r_{n, j_n}(x))\dd x- h(\tilde u_n, \tilde r_n)\right|<\frac{1}{n},
$$
recalling that $d$ is the local metric for the weak-weak$^\ast$ topology on $V$. The triangle inequality 
$$
d\big((\tilde u_{n}, \tilde r_{n,j_n}), (u, r)\big) \le d\big((\tilde u_{n}, \tilde r_{n,j_n}), (\tilde u_n, \tilde r_n)\big) + d\big((\tilde u_n, \tilde r_n), (u, r)\big) \to 0 \quad \text{as } n\to\infty
$$
shows that $\tilde u_n\rightharpoonup u$ in $H^1(\Omega)$, $\tilde r_{n,j_n}\stackrel{\ast}{\rightharpoonup} \tilde r$ in $L^\infty(\Omega)$ as $n\to \infty$. For the next step note first that the assumption $\liminf_{n\to\infty} h(u_n,r_n)<\infty$ implies that also $\liminf_{n\to\infty} \int_\Omega f_{\tilde u_n}(x,\tilde r_{n,j_n}(x))\dd x<\infty$. Then the definition of $h$ together with Lemma~\ref{lem:I_compare} yield
\begin{align*}
h(u,r) \leq & \liminf_{n\to\infty} \int_\Omega f_u(x,\tilde r_{n,j_n}(x))\dd x \\
\leq & \liminf_{n\to\infty} \int_\Omega f_{\tilde u_n}(x,\tilde r_{n,j_n}(x))\dd x + 2\lim_{n\to\infty} \left(\sqrt{\int_\Omega f_{\tilde u_n}(x,\tilde r_{n,j_n}(x))\dd x}\|\tilde u_n-u\|_2+ \|\tilde u_n-u\|_2^2\right) \\
=& \liminf_{n\to\infty}  h(\tilde u_n,\tilde r_n) \\
=& \liminf_{n\to\infty} h(u_n,r_n)
\end{align*}
where we have used that $ \tilde u_n \to u$ in $L^2(\Omega)$ as $n\to\infty$.

\medskip

It remains to show $\scm I \leq h$ on $\cC$. So let us consider $(u,r)\in \cC$ with $u\not =0$. From the strong minimum principle,  see \cite[Chapter 6]{PS07}, we get that $\inf_\Omega u>0$. Using the min-characterization in the definition \eqref{def:h} of $h$ let us choose a sequence $r_n\stackrel{\ast}{\rightharpoonup}r$ in $L^\infty(\Omega)$ such that $h(u,r)=\lim_{n\to\infty} \int_\Omega f_u(x,r_n(x))\dd x$. Next we choose an arbitrary $\alpha>0$ and define $\tilde u_n\in H^1(\Omega)$ as the unique weak solution of the boundary value problem
$$
-\Delta \tilde u_n + \alpha\tilde u_n = (\alpha+r_n)u \mbox{ in } \Omega, \quad \partial_\nu \tilde u_n=0 \mbox{ on } \partial\Omega.
$$
First we show that $\tilde u_n\to u$ in $H^1(\Omega)$ and in $L^\infty(\Omega)$ as $n\to \infty$. To see this, let us define $w_n\coloneqq\tilde u_n-u\in H^1(\Omega)$ and observe that $w_n$ satisfies
\begin{equation}\label{eq:wn}
-\Delta w_n + \alpha w_n = (r_n-r)u \mbox{ in } \Omega, \quad \partial_\nu w_n=0 \mbox{ on } \partial\Omega.
\end{equation}
Passing to the weak formulation of \eqref{eq:wn} with any $v\in H^1(\Omega)$ as test function, we get
\begin{equation}\label{weak:eq:wn}
\int_\Omega \nabla w_n \nabla v \dd x+\alpha\int_\Omega w_n v \dd x=\int_\Omega (r_n-r)u v \dd x.
\end{equation}
By choosing $v=w_n$ and using H\"older's inequality, we get
\begin{equation}\label{weak:eq:wn:est}
\int_\Omega |\nabla w_n|^2 \dd x+\alpha\int_\Omega |w_n|^2 \dd x=\int_\Omega (r_n-r)u w_n \dd x\le \|r_n-r\|_\infty \|u\|_2 \|w_n\|_2 \leq C\|w_n\|_2
\end{equation}
since $(r_n)$ is bounded in $L^\infty(\Omega)$. As the left hand side of the previous formula is equivalent to $\|w_n\|_{H^1}^2$, we have that $(w_n)$ is bounded in $H^1(\Omega)$, which implies the existence of $w\in H^1(\Omega)$ such that $w_n\rightharpoonup w$ in $H^1(\Omega)$ and $w_n\to w$ in $L^2(\Omega)$ by the Rellich Kondrachov Theorem and where we have used that $\partial\Omega$ is Lipschitz. 
From \eqref{weak:eq:wn} together with $r_n\stackrel{\ast}{\rightharpoonup}r$ in $L^\infty(\Omega)$ and $w_n\rightharpoonup w$ in $H^1(\Omega)$ we get
$$\int_\Omega \nabla w \nabla v \dd x+\alpha\int_\Omega w v \dd x=0$$
for all $v\in H^1(\Omega)$. In particular, by choosing $v=w$, we conclude that $w=0$, so that $w_n\rightharpoonup 0$ in $H^1(\Omega)$ and $w_n\to 0$ in $L^2(\Omega)$.  Returning to \eqref{weak:eq:wn:est} we find that $w_n\to 0$ in $H^1(\Omega)$ which gives us $\tilde u_n\to u$ in $H^1(\Omega)$. \cite[Theorem 8.22]{GT} Now we use the De Giorgi-Nash theory \cite[Theorem 8.22]{GT} which implies that $(w_n)$ is uniformly bounded in $C^{0,\beta}(\overline{\Omega})$ for some $\beta\in (0,1)$ and hence $\tilde u_n\to u$ uniformly on $\Omega$ by Ascoli Arzel\'a Theorem.

Next, we define 
$$
\tilde r_n \coloneqq \frac{-\Delta \tilde u_n}{\tilde u_n}=(\alpha+r_n)\frac{u}{\tilde u_n}-\alpha
$$
and find that 
$$
\tilde r_n-r_n = (\alpha+r_n)\left(\frac{u}{\tilde u_n}-1\right)\to 0 \mbox{ uniformly on } \Omega.
$$
Therefore, $(\tilde u_n, \tilde r_n)\in \cC$, $\tilde u_n\to u$ in $H^1(\Omega)$, $\tilde r_n\stackrel{\ast}{\rightharpoonup}r$ in $L^\infty(\Omega)$ as $n\to \infty$ which implies 
\begin{align*}
\scm I(u,r) & \leq \liminf_{n\to\infty} \int_\Omega f_{\tilde u_n}(x,\tilde r_n(x))\dd x \\
& \leq  \liminf_{n\to\infty} \int_\Omega f_u(x,\tilde r_n(x))\dd x \quad \mbox{ (where we used $\tilde u_n-u\to 0$ in $L^2(\Omega)$ and Lemma~\ref{lem:I_compare}) }\\
& = \liminf_{n\to\infty} \int_\Omega f_u(x,r_n(x))\dd x \quad \mbox{(where we used $\tilde r_n-r_n\to 0$ uniformly on $\Omega$)}\\
& = h(u,r).
\end{align*}
This concludes the proof of $\scm I\leq h$ on $\cC$ for elements $(u,r)\in \cC$ with $u\not =0$. So let us finally consider $(0,r)\in \cC$ which means that $r\in L^\infty(\Omega)$ is an arbitrary function. As before we can choose a sequence $r_n\stackrel{\ast}{\rightharpoonup}r$ in $L^\infty(\Omega)$ such that $\lim_{n\to\infty} \int_\Omega f_0(x,r_n(x))\dd x= h(0,r)$. Since $(0,r_n)\in \cC$ we get immediately that 
$$ 
\scm I(0,r)  \leq \liminf_{n\to\infty} \int_\Omega f_0(x,r_n(x))\dd x = h(0,r)
$$
and so the proof of $\scm I \leq h$ is finished. 
\end{proof}

We end the present section with a technical lemma which was used above and in Section \ref{sec_scmI=Istar}.

\begin{lemma} \label{lem:I_compare}
Recall the definition of $f_u$ in \eqref{def:dist:fix}. 
Let $u, u'\in H^1(\Omega)$, $r,r'\in \R$ and $x,x'\in \Omega$. Then 
    \begin{align*}
    |f_u(x,r)-f_{u'}(x',r')| & \leq 2\max\{\sqrt{ f_u(x,r)}, \sqrt{f_{u'}(x',r')}\} \interleave x-x', u(x)-u'(x'), r-r'\interleave \\
    & +  \interleave x-x', u(x)-u'(x'), r-r'\interleave^2
    \end{align*}
    where $\interleave\cdot\interleave$ is the Euclidean norm in $\R^{N+2}$.
\end{lemma}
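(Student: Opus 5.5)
The statement to prove is Lemma~\ref{lem:I_compare}. It is a pointwise estimate about distances to a fixed set, so the plan is to reduce it to the Lipschitz property of the Euclidean distance function and then expand a square. Write $z=(x,u(x),r)$ and $z'=(x',u'(x'),r')$ in $Y\subset\R^{N+2}$, and set $d(z)\coloneqq \inf_{\tilde z\in\D}|z-\tilde z|$. Then $f_u(x,r)=d(z)^2$ and $f_{u'}(x',r')=d(z')^2$. (For $u\in H^1(\Omega)$ we fix representatives; the inequality is pointwise.)

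\textbf{Step 1: $d$ is $1$-Lipschitz.} For any $\tilde z\in\D$ the triangle inequality gives
\[
d(z)\le |z-\tilde z|\le |z-z'|+|z'-\tilde z|.
\]
Taking the infimum over $\tilde z\in\D$ yields $d(z)\le d(z')+|z-z'|$. Exchanging the roles of $z$ and $z'$ gives $|d(z)-d(z')|\le |z-z'|=\interleave x-x',u(x)-u'(x'),r-r'\interleave$.

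\textbf{Step 2: pass to the squares.} Assume without loss of generality that $d(z)\ge d(z')$. By Step~1 we have $d(z)\le d(z')+\delta$ with $\delta=|z-z'|$. Squaring,
\[
d(z)^2\le d(z')^2+2d(z')\,\delta+\delta^2.
\]
Since $d(z')=\min\{d(z),d(z')\}\le \max\{\sqrt{f_u(x,r)},\sqrt{f_{u'}(x',r')}\}$, this gives
\[
0\le f_u(x,r)-f_{u'}(x',r')\le 2\max\{\sqrt{f_u(x,r)},\sqrt{f_{u'}(x',r')}\}\,\delta+\delta^2,
\]
which is the claimed bound. The case $d(z)<d(z')$ is symmetric.

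There is no real obstacle here. The only point needing care is the direction of the inequalities in Step~2, which is why the $\max$ of the two square roots appears in the estimate.
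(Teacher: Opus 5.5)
Your proposal is correct and follows essentially the paper's argument. The paper expands $|z-\tilde z|^2\le(|z'-\tilde z|+|z-z'|)^2$ for each $\tilde z\in\D$ before taking the infimum and then swaps roles, whereas you take the infimum first (the $1$-Lipschitz property of $\sqrt{f}$) and then square; your Step~2 in fact gives the slightly sharper bound with $\min\{\sqrt{f_u(x,r)},\sqrt{f_{u'}(x',r')}\}$ in place of the $\max$.
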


\begin{proof} For $x, \tilde x\in \Omega$, $u, \tilde u\in (0,\infty)$ and $r,\tilde r\in \R$ let
\begin{align*}
\delta(x,\tilde x, u, \tilde u, r, \tilde r) \coloneqq |x-\tilde x|^2+ |u-\tilde u|^2+ |r-\tilde r|^2.
\end{align*}
Then we have 
\begin{eqnarray*}
\lefteqn{f_u(x, r)} \\
& = & \inf_{(\tilde x,\tilde u,\tilde r)\in \D} \delta(x,\tilde x, u(x), \tilde u,r, \tilde r) \\
& \leq &\inf_{(\tilde x,\tilde u,\tilde r)\in \D} \Bigl(\delta(x',\tilde x,u'(x'), \tilde u,r', \tilde r) + 2\sqrt{\delta(x',\tilde x,u'(x'), \tilde u, r', \tilde r)}\interleave x-x', u(x)-u'(x'), r-r'\interleave \\
&  & \quad + \interleave x-x', u(x)-u'(x'), r-r'\interleave^2\Bigr) \\
& = & f_{u'}(x', r') + 2 \sqrt{ f_{u'}(x', r')} \interleave x-x', u(x)-u'(x'), r-r'\interleave +\interleave x-x', u(x)-u'(x'), r-r'\interleave^2.
\end{eqnarray*}
If we switch the roles of $f_u(x,r)$ and $f_{u'}(x',r')$ we get a similar inequality from which the claim follows. 
\end{proof}

\section*{Acknowledgment}
L.B. and W.R. acknowledge funding by the Deutsche Forschungsgemeinschaft (DFG, German Research Foundation) – Project-ID 258734477 – SFB 1173.
L.B. and P.M. are members of the {\em Gruppo Nazionale per l'Analisi Ma\-te\-ma\-ti\-ca, la Probabilit\`a e le loro Applicazioni} (GNAMPA) of the {\em Istituto Nazionale di Alta Matematica} (INdAM). L.B and P.M. are partially supported by INdAM-GNAMPA Project 2026 titled \textit{Structural degeneracy and criticality in (sub)elliptic PDEs} (E53C25002010001).

\section*{Data availability}
We do not analyze or generate any datasets, because our work proceeds within a theoretical and
mathematical approach.

\section*{Conflict of interest}
The authors have no conflicts of interest to declare.

\bibliography{bibliography}
\bibliographystyle{abbrv}
\end{document}